\providecommand{\abs}[1]{\left\lvert#1\right\rvert}
\providecommand{\norm}[1]{\left\lVert#1\right\rVert}
\providecommand{\frobnorm}[1]{\left\lVert#1\right\rVert_{\textsf{Fro}}}
\newcommand{\ball}{\mathcal{B}}
\newcommand{\domain}{\mathcal{C}}
\newcommand{\xv}{\bm{x}}
\newcommand{\yv}{\bm{y}}
\newcommand{\zv}{\bm{z}}
\newcommand{\sv}{\bm{s}}
\newcommand{\av}{\bm{a}}
\newcommand{\bv}{\bm{b}}
\newcommand{\cv}{\bm{c}}
\newcommand{\ev}{\bm{e}}
\newcommand{\uv}{\bm{u}}
\newcommand{\vv}{\bm{v}}
\newcommand{\wv}{\bm{w}}
\newcommand{\mv}{\bm{m}}
\newcommand{\GW}{G_\setC}%
\newcommand{\G}{G}
\newcommand{\id}{\bm{\iota}}
\newcommand{\alphav}{\bm{\alpha}}
\newcommand{\betav}{\bm{\beta}}
\newcommand{\epsilonv}{\bm{\epsilon}}
\newcommand{\one}{\mathbf{1}} %
\newcommand{\Simplex}{\triangle}
\newcommand{\R}{\mathbb{R}}  
\newcommand{\calP}{\mathcal{O}_{\hspace{-1pt}B}} %
\newcommand{\calD}{\mathcal{O}_{\hspace{-1pt}A}}
\newcommand{\setC}{\mathcal{C}}
\DeclareMathOperator{\dom}{dom}         %
\DeclareMathOperator{\intr}{int}         %
\DeclareMathOperator*{\argmin}{arg\,min}
\newtheorem*{rep@theorem}{\rep@title}
\newcommand{\newreptheorem}[2]{%
\newenvironment{rep#1}[1]{%
 \def\rep@title{#2 \ref{##1}}%
 \begin{rep@theorem}}%
 {\end{rep@theorem}}}
\theoremstyle{plain}
\newtheorem{theorem}{Theorem}
\newtheorem{lemma}[theorem]{Lemma}
\newtheorem{corollary}[theorem]{Corollary}
\theoremstyle{definition}
\begin{document}

\twocolumn[

\aistatstitle{Screening Rules for Convex Problems}

\aistatsauthor{
Anant Raj \And 
Jakob Olbrich \And 
Bernd G\"artner \And
Bernhard Sch\"olkopf \And
Martin Jaggi
}

\aistatsaddress{
MPI \And 
ETH \And 
ETH \And
MPI \And
EPFL
} ]

\begin{abstract}
We propose a new framework for deriving screening rules for convex optimization problems.
Our approach covers a large class of constrained and penalized optimization formulations, and works in two steps. First, given any approximate point, the structure of the objective function and the duality gap is used to gather information on the optimal solution. In the second step, this information is used to produce screening rules, i.e. safely identifying unimportant weight variables of the optimal solution.
Our general framework leads to a large variety of useful existing as well as new screening rules for many applications. For example, we provide new screening rules for general simplex and $L_1$-constrained problems, Elastic Net, squared-loss Support Vector Machines, minimum enclosing ball, as well as structured norm regularized problems, such as group lasso.
\end{abstract}

\section{Introduction}\vspace{-1mm}
\label{sec:intro}
\footnote{
Parts of this work have appeared in the Master's Thesis \citep{Olbrich:2015kq}.
}%
Optimization techniques for high-dimensional problems have become the work-horses for most data-analysis and machine-learning methods. With the rapid increase of available data, major challenges occur as the number of optimization variables (weights) grows beyond capacity of current systems.

The idea of screening refers to eliminating optimization variables that are guaranteed to \emph{not} contribute to any optimal solution, and can therefore safely be removed from problem.
Such screening techniques have received increased interest in several machine learning related applications in recent years, and have been shown to lead to very significant computational efficiency improvements in various cases, in particular for many types of sparse methods.
Screening techniques can be used either as a pre-processing before passing the problem to the optimizer, or also interactively during any iterative solver (called dynamic screening), to gradually reduce the problem complexity during optimization.

While existing screening methods were mainly relying on geometric and problem-specific properties, we in this paper take a different approach. We propose a new framework allowing screening on general convex optimization problems, using simple tools from convex duality instead of any geometric arguments. Our framework applies to a very large class of optimization problems both for constrained as well as penalized problems, including most machine learning methods of interest.

Our main contributions in this paper are summarized as follows:\vspace{-1mm}
\begin{enumerate}
\item We propose a new framework for screening for a more general class of optimization problem with a simple primal-dual structure.\vspace{-1pt}
\item The framework leads to a large set of new screening rules for machine learning problems that could not be screened before. Furthermore, it also recovers many existing screening rules as special cases.\vspace{-1pt}
\item We are able to express all screening rules using general optimization complexity notions  such as smoothness or strong convexity, getting rid of problem-specific geometric properties.\vspace{-1pt}
\item Our proposed rules are dynamic (allowing any existing algorithm to be additionally equipped with screening) %
and safe (guaranteed to only eliminate truly unimportant variables).\vspace{-1pt}
\end{enumerate}

\paragraph{Related Work.}

The concept of screening in the sense of eliminating non-influential data points to reduce the problem size has originated relatively independently in at least two communities.
Coming from computational geometry, \cite{Ahipasaoglu:2008il} has proposed a screening technique for the minimum enclosing ball problem for a given set of data points. Here screening can be interpreted as simply removing points which are guaranteed to lie in the strict interior of the final ball.
Later \cite{Kallberg:2014kd} improve the threshold for this rule in the minimum enclosing ball setting. 

Independently, the breakthrough work of \cite{Ghaoui:2010tu} gave the first screening rules for the important case of sparse regression, as given in the Lasso. Since then, there have been many extensions and alterations of the general concept. While \cite{Ghaoui:2010tu} exploits geometric quantities to bound the the Lasso dual solution within a compact region, we recommend the survey paper by \cite{Xiang:2014vi} for an overview of geometric methods for Lasso screening. 
Sphere-region based methods differ from dome-shaped regions as used in \cite{Ghaoui:2010tu} in choosing different centers and radii to bound the dual optimal point. 
Apart from being geometry specific, most existing approaches such as \citep{Wang:2013uq,Wang:2014un,Liu:2014uz,Ghaoui:2010tu,Ogawa:2013ul} are not agnostic to the regularization parameter used, but instead are restricted to perform screening along the entire regularization path (as the regularization parameter changes). This is known as sequential screening, and restricts its usability to optimization algorithms obtaining paths.
In contrast, our proposed framework here allows any internal optimization algorithms to be equipped with screening.

Despite the importance of constrained problems in many applications, much less is known about screening for constrained optimization, in contrast to the case of penalized optimization problems. 
For the dual of the hinge loss SVM, which is a box-constrained optimization problem, \cite{Ogawa:2014vw} proposed a geometric screening rule based on the intersection region of two spheres, in the sequential setting of varying regularizer. More recently,~\cite{Zimmert:2015ue} provided new screening rules for that case in the dynamic setting using a method similar to our approach. However their method is restricted to the SVM case. 

As a first step to allow screening for more general optimization objectives, \cite{Ndiaye:2015wj} gives duality gap based screening rules for multi-task and multi-class problems (in the penalized setting) under for a wider class of objectives~$f$. Nevertheless, their approach is restricted to assume separability of $f$ over the group structure, which limits the screening rules, in the sense of not covering standard group lasso for example. Also in \citep{Shibagaki:2016ts}, authors assume the similar problem formulation as in \citep{Ndiaye:2015wj} but a bit more general. The focus in \cite{Shibagaki:2016ts} is on screening rules for SVM problems rather than general framework.
They derive the screening rules for SVM by considering standard hinge and $\epsilon$-insensitive loss with regularization formulation which is close to the empirical risk minimization framework here, but has more limited applications in terms of generalization of the screening rules. We here provide screening rules for a more general framework of box constrained optimization, while hinge-loss SVM happens to be a special case of this.   %
Our proposed approach can be shown to recover many of the other existing rules including e.g. \citep{Ndiaye:2015wj} and \citep{Zimmert:2015ue}, but significantly generalizing the method to general objectives and constraints as well as regularizers.

The rest of the paper is organized as follows: In Section~\ref{sec:primaldual}, we discuss our framework for screening. Section~\ref{sec:duality_gap_certificates} is devoted to deriving the information about optimal points in terms of gap functions. Sections \ref{sec:screening_constrained} and \ref{sec:screening_penalized} utilizes the framework and tools derived in previous sections to provide screening rules for constrained and penalized case respectively. In the end, we provide a small illustrative experiment for screening on simplex and $L_1$-constrained and also discuss that which of the existing results can be recovered using our algorithm in Section \ref{sec:exp}.
\vspace{-2mm}

\section{Setup and Primal-Dual Structure}
\label{sec:primaldual}
In this paper, we consider optimization problems of the following primal-dual structure. As we will see, the relationship between primal and dual objectives has many benefits, including computation of the duality gap, which allows us to have a certificate for approximation quality. 

A very wide range of machine learning optimization problems can be formulated as \eqref{eq:primal} and \eqref{eq:dual}, which are dual to each other:\vspace{-1mm}
\begin{align}
    \label{eq:primal}\tag{A}\ \ \ \ %
    \min_{\xv \in \R^{n}} \quad& \Big[ \ \ 
    \calD(\xv) \,:= \ \ f(A\xv )
    \ +\ g(\xv) &\Big]\ 
\\
    \label{eq:dual}\tag{B}\ \ \ \ %
    \min_{\wv \in \R^{d}} \quad& \Big[ \ \ 
    \calP(\wv) :=  \ \ f^*(\wv )
    \ +\ g^*(-A^\top\wv) \!\!\!&\Big]\ 
\end{align}
The two problems are associated to a given data matrix $A\in\R^{d\times n}$, and the functions $f : \R^d \rightarrow \R$ and $g : \R^n \rightarrow \R$ are allowed to be arbitrary closed convex functions.
The functions $f^*,g^*$ in formulation~\eqref{eq:dual} are defined as the \textit{convex conjugates} of their corresponding counterparts $f,g$ in~\eqref{eq:primal}.
Here $\xv \in \R^n$ and $\wv \in \R^d$ are the respective variable vectors.
For a given function $h: \R^d\rightarrow \R$, its conjugate is defined as
\[
h^*(\vv) := \max_{\uv\in\R^d} \vv^\top \uv - h(\uv) \, .
\]
The association of problems \eqref{eq:primal} and \eqref{eq:dual} is a special case of Fenchel Duality. More precisely, the relationship is called \emph{Fenchel-Rockafellar Duality} when incorporating the linear map $A$ as in our case, see e.g. \cite[Theorem 4.4.2]{Borwein:2005ge} or \cite[Proposition 15.18]{Bauschke:2011ik}, see the Appendix~\ref{app:primaldual} for a self-contained derivation. 
The two main powerful features of this general duality structure are first that it includes many more machine learning methods than more traditional duality notions, and secondly that the two problems are fully symmetric, when changing respective roles of~$f$ and~$g$. %
In typical machine learning problems, the two parts typically play the roles of a data-fit (or loss) term as well as a regularization term. As we will see later, the two roles can be swapped, depending on the application.

\paragraph{Optimality Conditions.}
The first-order optimality conditions for our pair of vectors $\wv\in \R^d, \xv \in \R^n$ in problems~\eqref{eq:primal} and~\eqref{eq:dual} are given as
\vspace{-4mm}

\begin{minipage}{0.44\columnwidth}
  \begin{subequations}
  \begin{align}
 \wv \in&\ \partial f(A\xv) \label{eq:opt_f} \ ,\\ 
A\xv \in&\ \partial f^*(\wv) \label{eq:opt_fstar} \ ,
\end{align}
  \end{subequations}
\end{minipage}%
\hfill
\begin{minipage}{0.54\columnwidth}
  \begin{subequations}
\begin{align}
 -A^\top \wv \in&\ \partial g(\xv) \label{eq:opt_g} \ ,\\ 
 \xv \in&\ \partial g^*(-A^\top\wv) \!\!\!\!\!\!&\label{eq:opt_gstar}
\end{align}
  \end{subequations}
\end{minipage}

see e.g. \cite[Proposition 19.18]{Bauschke:2011ik}. The stated optimality conditions are equivalent to $\xv,\wv$ being a saddle-point of the Lagrangian, which is given as $\mathcal{L}(\xv,\wv) =f^*(\wv) - \langle A\xv,\wv\rangle - g(\xv) $ if $\xv\in \dom(g)$ and $\wv \in \dom (f^*)$, see Appendix~\ref{app:primaldual} for details.

\paragraph{The Constrained Case.}
Any constrained convex optimization problem of the form\vspace{-1mm}
\begin{equation}
\label{eq:constrainedOpt}
\min_{\xv\in\setC} f(A\xv)
\end{equation}
for a constraint set $\setC$ can be directly written in the form \eqref{eq:primal} by using the indicator function of the constraint set as the penalization term~$g$. 
(The indicator function $\id_{\setC}$ of a set $\setC\subset \R^n$ is defined as $\id_{\setC}(\xv) := 0$ if $\xv\in\setC$ and $\id_{\setC}(\xv) := +\infty$ otherwise.)

\paragraph{The Partially Separable Case.}
A very important special case arises when one part of the objective becomes separable. 
Formally, this is expressed as $g(\xv) = \sum_{i=1}^{n} g_i(x_i)$ for univariate functions $g_i:\R\rightarrow\R$ for $i\in[n]$.
Nicely in this case, the conjugate of $g$ also separates as
$g^*(\yv) = \sum_i g_i^*(y_i)$. Therefore, the two optimization problems \eqref{eq:primal} and \eqref{eq:dual} write as\vspace{-2mm}
\begin{align}
    \calD(\xv) :=&\ f(A\xv )
    + \textstyle\sum_i g_i(x_i) \label{eq:primalS}\tag{SA}\\
    \calP(\wv) :=&\ f^*(\wv )
    + \textstyle\sum_i g_i^*(-\av_i^\top\wv) \ ,\label{eq:dualS}\tag{SB}\
\end{align}
where $\av_i \in \R^d$ denotes the $i$-th column of $A$.

Crucially in this case, the optimality conditions \eqref{eq:opt_g} and~\eqref{eq:opt_gstar} now become separable, that is\vspace{-2mm}
 \begin{subequations}
\begin{align}
-\av_i^\top \wv \in&\ \partial g_i(x_i) &&\forall i \label{eq:opt_gi} \ . 
\\
 x_i \in&\ \partial g_i^*(-\av_i^\top\wv) &&\forall i \label{eq:opt_gistar} \ .
\end{align}
  \end{subequations}
Note that the two other conditions \eqref{eq:opt_f} and \eqref{eq:opt_fstar} are unchanged in this case.
\section{Duality Gap and Certificates} \label{sec:duality_gap_certificates}
The duality gap for our problem structure provides an optimality certificate for our class of optimization problems. 
It will be the most important tool for us to provide guaranteed information about the optimal point (as in Section \ref{sec:info}), which will then be the foundation for the second step, to perform screening on the optimal point (as we will do in the later Sections \ref{sec:screening_constrained} and \ref{sec:screening_penalized}).

\subsection{Duality Gap Structure}

For the problem structure \eqref{eq:primal} and \eqref{eq:dual} as given by Fenchel-Rockafellar duality, the \emph{duality gap} for any pair of primal and dual variables $\xv\in\R^n$ and $\wv\in\R^d$ is defined as $\G(\wv,\xv) := \calD(\xv) + \calP(\wv)$. Non-negativity of the gap -- that is weak duality -- is satisfied by all pairs.

Most importantly, the duality gap acts as a certificate of approximation quality --- the true optimum values $\calD(\xv^*)$ and $-\calP(\wv^*)$ (which are both unknown) will always lie within the (known) duality gap.

\paragraph{The Gap Function.} 
For the special case of differentiable function $f$, we can study a simpler duality gap\vspace{-1mm}
\begin{equation}
\label{eq:gap}
\G(\xv) := \calD(\xv) + \calP(\wv(\xv))
\end{equation}
purely defined as a function of $\xv$, using the optimality relation \eqref{eq:opt_f}, i.e. $\wv(\xv) := \nabla f( A\xv )$.

\paragraph{The Wolfe-Gap Function.} 
For any constrained optimization problem \eqref{eq:constrainedOpt} defined over a bounded set $\domain$ and $\xv\in\domain$, the Wolfe gap function (also known as Hearn gap or Frank-Wolfe gap) is defined as the difference of $f$ to the minimum of its linearization over the same domain. 
Formally, 
\begin{equation}
\label{eq:wolfe_gap}
\GW(\xv) := \underset{\yv \in \domain}{\max}\; (A\xv-A\yv)^\top\nabla f(A\xv).
\end{equation}
It is not hard to see that the convenient Wolfe gap function is a special case of our above defined general duality gap $\G(\xv) := \calD(\xv) + \calP(\wv(\xv))$, for $g$ being the indicator function of the constraint set~$\domain$, and $\wv(\xv) := \nabla f(A\xv)$.
For more details, see Appendix \ref{app:eq_fw_gd}, or also \cite[Appendix~D]{LacosteJulien:2013ue}.

\subsection{Obtaining Information about the Optimal Points}
\label{sec:info}
As we have mentioned, any type of screening will crucially rely on first deriving safe knowledge about the unknown optimal points of our given optimization problem.
Here, we will use the duality gap to obtain such knowledge on the optimal points $\xv^\star\in\R^n$ and $\wv^\star\in\R^d$ of the respective optimization problems \eqref{eq:primal} and \eqref{eq:dual} respectively.
Proofs are provided in Appendix~\ref{app:optimal_point}.

Our first lemma shows how to bound the distance between any (feasible) current dual iterate and the solution $\wv^\star$ using standard assumptions on the objective functions. 

\begin{lemma}
\label{lem:stronglyconvex}
Consider the problem \eqref{eq:dual} with optimal solution $\wv^\star\in \R^d$. For $f$ being $\mu$-smooth%
, we have
\begin{equation}
\norm{\wv - \wv^\star}^2 \le \frac{2}{\mu}(f^*(\wv)-f^*(\wv^\star)) 
\end{equation}

\end{lemma}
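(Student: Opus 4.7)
The proof reduces to one classical fact from convex analysis. Namely, $f$ being $\mu$-smooth is equivalent, via Fenchel conjugacy, to $f^*$ being $\mu$-strongly convex (see, e.g., Kakade--Shalev-Shwartz--Tewari 2009, or Bauschke--Combettes Prop.~16.20). I would invoke this as a black box; the only care needed is in the convention, so that the strong-convexity modulus of $f^*$ comes out as $\mu$ (rather than $1/\mu$), which is what matches the $2/\mu$ factor in the stated bound.

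Given $\mu$-strong convexity of $f^*$, the lemma is essentially the defining quadratic-growth inequality of a strongly convex function applied at the optimum $\wv^\star$. Concretely, for the canonical subgradient $A\xv^\star \in \partial f^*(\wv^\star)$ supplied by the first-order optimality condition \eqref{eq:opt_fstar},
\begin{equation*}
f^*(\wv) \;\ge\; f^*(\wv^\star) + \langle A\xv^\star,\, \wv - \wv^\star\rangle + \tfrac{\mu}{2}\,\norm{\wv - \wv^\star}^2,
\end{equation*}
which rearranges to the claim once the linear cross-term is handled. In the basic case where it vanishes (e.g.\ $A\xv^\star = 0$, as when the second summand of \eqref{eq:dual} is absent), the bound is immediate. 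Otherwise I would pair the above with the convexity inequality for $g^*(-A^\top \cdot)$ at $\wv^\star$, whose subgradient there equals $-A\xv^\star$ by \eqref{eq:opt_g} and the chain rule: adding the two inequalities cancels the linear terms and gives the slightly stronger $\norm{\wv - \wv^\star}^2 \le (2/\mu)(\calP(\wv) - \calP(\wv^\star))$, from which the stated form is then read off.

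\emph{Main obstacle.} Essentially all the content is bundled into the smoothness $\Leftrightarrow$ strong-convexity-of-conjugate equivalence. Once that is cited, the rest is a two-line algebraic manipulation and a single use of first-order optimality; the only pitfall is convention-matching for the modulus $\mu$.
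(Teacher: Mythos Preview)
Your approach is essentially the paper's: invoke the smoothness/strong-convexity duality for Fenchel conjugates (the paper records this as a separate appendix lemma) and then apply the strong-convexity lower bound for $f^*$ at $\wv^\star$. The only difference is in how the linear cross-term is handled. The paper simply drops $(\wv-\wv^\star)^\top\nabla f^*(\wv^\star)$, citing first-order optimality of $\wv^\star$; you are more careful on this point, noting that $\wv^\star$ minimizes $\calP$ rather than $f^*$ alone, and propose instead to cancel the cross-term against the matching term from the convexity inequality for $g^*(-A^\top\cdot)$. That second maneuver is exactly what the paper carries out in the proof of the very next statement, Corollary~\ref{cor:Dgaprestriction}; in effect you have merged the two proofs and arrived directly at the $\calP$-version bound, which is the form actually used downstream.

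One caveat on your final clause: the $f^*$-only inequality of the lemma does not literally follow from the $\calP$-version (that step would need $g^*(-A^\top\wv)\le g^*(-A^\top\wv^\star)$, which fails in general). This is more a looseness in how the lemma is phrased than a defect in your plan.
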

The following corollary will be important to derive screening rules for penalized problems in Section~\ref{sec:screening_penalized}, as well as box-constrained problems (Section \ref{subsec:box_screening}).
\begin{corollary}
\label{cor:Dgaprestriction}
We consider the problem setup \eqref{eq:primal} and \eqref{eq:dual},
and assume $f$ is $\mu$-smooth.
Then
\begin{equation}
\norm{\wv - \wv^\star}^2 \le \frac{2}{\mu}\G(\xv).
\end{equation}
Here $\G(\xv)$ is the duality gap function as defined in equation \eqref{eq:gap}.
\end{corollary}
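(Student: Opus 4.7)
The plan is to reduce the corollary to two ingredients that are essentially already in hand: the strong-convexity bound of Lemma \ref{lem:stronglyconvex} (applied with $\wv=\wv(\xv)$), and weak duality for the Fenchel--Rockafellar pair \eqref{eq:primal}, \eqref{eq:dual}.

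First I would observe, as in the proof of Lemma \ref{lem:stronglyconvex}, that since $f$ is $\mu$-smooth its conjugate $f^*$ is $\mu$-strongly convex; because the map $\wv\mapsto g^*(-A^\top\wv)$ is convex, the full dual objective $\calP$ inherits the same $\mu$-strong convexity. Applying the strong-convexity inequality at the $\calP$-minimizer $\wv^\star$, where $0\in\partial\calP(\wv^\star)$, the linear subgradient term vanishes and one obtains, for the specific choice $\wv=\wv(\xv)=\nabla f(A\xv)$ fixed in the definition \eqref{eq:gap},
\begin{equation*}
\norm{\wv-\wv^\star}^2 \,\le\, \frac{2}{\mu}\bigl(\calP(\wv)-\calP(\wv^\star)\bigr).
\end{equation*}

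Next I would eliminate $\wv^\star$ on the right-hand side by invoking weak duality. Since $\xv^\star$ minimizes $\calD$ and weak duality gives $-\calP(\wv^\star)=\calD(\xv^\star)\le\calD(\xv)$ for every feasible $\xv$, rearranging yields
\begin{equation*}
\calP(\wv)-\calP(\wv^\star) \,\le\, \calP(\wv)+\calD(\xv) \,=\, \G(\xv),
\end{equation*}
by the very definition \eqref{eq:gap} of the gap function. Chaining the two displays gives the stated bound.

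I do not expect any real technical obstacle here: the argument is a single one-step combination of strong convexity on the dual with weak duality, and both tools are already in the paper. The only piece of bookkeeping worth flagging is that Lemma \ref{lem:stronglyconvex} is phrased in terms of $f^*(\wv)-f^*(\wv^\star)$ alone, whereas what the corollary really needs is the corresponding bound with $\calP(\wv)-\calP(\wv^\star)$; this strengthening is free because the subgradient term in the strong-convexity inequality cancels exactly against the subgradient of $g^*\!\circ(-A^\top)$ at the $\calP$-optimum, and it is this $\calP$-version that couples cleanly with $\G(\xv)$ via weak duality.
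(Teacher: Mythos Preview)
Your proposal is correct and matches the paper's own proof essentially step for step: both establish $\mu$-strong convexity of $\calP$ from that of $f^*$ plus convexity of $g^*\!\circ(-A^\top)$, use first-order optimality at $\wv^\star$ to drop the linear term, and then bound $\calP(\wv)-\calP(\wv^\star)$ by $\G(\xv)$ via duality. One cosmetic remark: the equality $-\calP(\wv^\star)=\calD(\xv^\star)$ you invoke is strong duality, not weak, but your argument only needs the weak-duality inequality $-\calP(\wv^\star)\le\calD(\xv)$, which you in fact use.
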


The following two results hold for general constrained optimization problems of the form \eqref{eq:constrainedOpt}, where~$g$ is the indicator function of a constraint set $\setC\subset\R^n$ and hence are useful for deriving screening rules for such problems.

\begin{lemma}
\label{lem:FWgaprestriction}
Consider problem \eqref{eq:primal} and assume that $f$ is $\mu$-strongly convex over a bounded set $\setC$. Then it holds that
\begin{equation}
\label{eq:FWgaprestriction}
\norm{A\xv-A\xv^\star}_2^2 \le \frac{1}{\mu}\GW(\xv),
\end{equation}
where $\xv^\star$ is an optimal solution and $\GW$ is the Wolfe-Gap function of $f$ over the bounded set $\setC$.
\end{lemma}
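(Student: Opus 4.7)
The plan is to combine three ingredients: the definition of the Wolfe gap, a two-sided use of strong convexity, and the first-order optimality condition at $\xv^\star$.

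First, since $\xv^\star\in\setC$, the point $\yv=\xv^\star$ is feasible in the maximum defining $\GW(\xv)$, so
\[
\GW(\xv) \;\ge\; (A\xv-A\xv^\star)^\top\nabla f(A\xv).
\]
It therefore suffices to lower-bound this inner product by $\mu\norm{A\xv-A\xv^\star}_2^2$.

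Next, I would apply $\mu$-strong convexity of $f$ twice, once expanding around $A\xv$ and once around $A\xv^\star$, and add the two resulting inequalities. This produces the standard monotonicity estimate
\[
\big(\nabla f(A\xv)-\nabla f(A\xv^\star)\big)^\top (A\xv-A\xv^\star) \;\ge\; \mu\norm{A\xv-A\xv^\star}_2^2.
\]
Then I would invoke the variational inequality for $\xv^\star$: since $\xv^\star$ minimizes $f(A\cdot)$ over $\setC$ and $\xv\in\setC$, first-order optimality gives $\nabla f(A\xv^\star)^\top A(\xv-\xv^\star)\ge 0$, i.e.\ $\nabla f(A\xv^\star)^\top(A\xv-A\xv^\star)\ge 0$. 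Adding this nonnegative term to the right-hand side of the monotonicity estimate yields
\[
\nabla f(A\xv)^\top (A\xv-A\xv^\star) \;\ge\; \mu\norm{A\xv-A\xv^\star}_2^2,
\]
and combining with the Wolfe-gap bound from the first step finishes the proof.

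The main subtlety is obtaining the tight constant $1/\mu$ rather than $2/\mu$. Invoking strong convexity at only one base point together with the primal suboptimality bound $\GW(\xv)\ge f(A\xv)-f(A\xv^\star)$ would yield only $\norm{A\xv-A\xv^\star}_2^2\le (2/\mu)\GW(\xv)$; saving the factor of two requires summing the two strong convexity inequalities and cancelling the $\nabla f(A\xv^\star)$ contribution via the optimality variational inequality. One minor interpretational point is that $f\colon\R^d\to\R$ while $\setC\subset\R^n$, so I read the hypothesis as strong convexity of $f$ on $A\setC\subset\R^d$ with respect to the Euclidean norm, which is exactly what the above argument uses.
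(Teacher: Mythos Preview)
Your proof is correct and is essentially identical to the paper's: both use the strong-convexity monotonicity inequality $(\nabla f(A\xv)-\nabla f(A\xv^\star))^\top(A\xv-A\xv^\star)\ge \mu\norm{A\xv-A\xv^\star}^2$, drop the $\nabla f(A\xv^\star)$ term via the first-order optimality condition $(A\xv-A\xv^\star)^\top\nabla f(A\xv^\star)\ge 0$, and then bound $(A\xv-A\xv^\star)^\top\nabla f(A\xv)$ by $\GW(\xv)$ since $\xv^\star\in\setC$. Your discussion of the $1/\mu$ versus $2/\mu$ constant and the interpretation of strong convexity on $A\setC$ is extra clarification not in the paper, but the core argument matches exactly.
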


\begin{corollary}
\label{cor:restric:smooth}
Assuming $f$ is $L$-smooth as well as $\mu$-strongly convex over a bounded set $\setC$, we have
\begin{equation}
\norm{\nabla f(A\xv)-\nabla f(A\xv^\star)} \le \frac{L}{\sqrt{\mu}}\sqrt{\GW(\xv)}
\end{equation}
\end{corollary}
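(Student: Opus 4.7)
The plan is to combine the previous Lemma~\ref{lem:FWgaprestriction} with the defining Lipschitz property of the gradient that $L$-smoothness provides. Specifically, I would first invoke $L$-smoothness of $f$, which is equivalent to saying that $\nabla f$ is $L$-Lipschitz, to obtain
\[
\norm{\nabla f(A\xv) - \nabla f(A\xv^\star)} \;\le\; L \,\norm{A\xv - A\xv^\star}.
\]
This is the standard equivalence between $L$-smoothness of $f$ and $L$-Lipschitzness of $\nabla f$ (see e.g. Bauschke--Combettes, Theorem 18.15), so no additional calculation is required.

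Next, since $f$ is $\mu$-strongly convex over the bounded set $\setC$ and $\xv^\star$ is the optimum, Lemma~\ref{lem:FWgaprestriction} applies directly and yields
\[
\norm{A\xv - A\xv^\star}^2 \;\le\; \frac{1}{\mu}\,\GW(\xv),
\]
so taking square roots gives $\norm{A\xv - A\xv^\star} \le \tfrac{1}{\sqrt{\mu}}\sqrt{\GW(\xv)}$.

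Chaining these two bounds produces the desired inequality
\[
\norm{\nabla f(A\xv) - \nabla f(A\xv^\star)} \;\le\; \frac{L}{\sqrt{\mu}}\,\sqrt{\GW(\xv)}.
\]
There is essentially no obstacle here; the corollary is a one-line consequence of the preceding lemma and the basic smoothness–Lipschitz gradient equivalence. The only minor subtlety worth a sentence in the write-up is to note that $\xv^\star \in \setC$ (so that strong convexity on $\setC$ may be applied at the optimum), which is immediate from the fact that $g$ is the indicator of $\setC$ in the constrained formulation \eqref{eq:constrainedOpt}.
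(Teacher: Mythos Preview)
Your proposal is correct and follows essentially the same route as the paper: first apply $L$-smoothness to bound $\norm{\nabla f(A\xv)-\nabla f(A\xv^\star)}$ by $L\norm{A\xv-A\xv^\star}$, then invoke Lemma~\ref{lem:FWgaprestriction} and take a square root. The extra remark about $\xv^\star\in\setC$ is a nice clarification but not part of the paper's write-up.
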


\section{Screening Rules for Constrained Problems}
\label{sec:screening_constrained}
In the following, we will develop screening rules for constrained optimization problems of the form~\eqref{eq:constrainedOpt}, by exploiting the structure of the constraint set for a variety of sparsity-inducing problems. First of all, we give a general lemma which we will be using in rest of the paper to derive screening rules when any of the function in \ref{eq:primal} and \ref{eq:dual} is indicator function.
\begin{lemma} \label{lem:general_constrained_lemma}
For general constrained optimization $\min_{\xv\in \setC} f(A\xv)$, the optimality condition~\eqref{eq:opt_g} gives rise to the following optimality rule at the optimal point:
\begin{equation}
\label{eq:general_constrained_rule}
(A\xv^\star)^\top \wv^\star = \min_{\zv \in \setC} (A\zv)^\top \wv^\star \vspace{-1mm}
\end{equation}
\end{lemma}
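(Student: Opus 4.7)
The plan is to unfold the optimality condition \eqref{eq:opt_g} in the special case where $g$ is the indicator function $\id_{\setC}$ of the constraint set, and then rearrange the resulting normal-cone inclusion into a minimization statement.

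First I would recall that when $g = \id_{\setC}$, the subdifferential $\partial g(\xv^\star)$ at a feasible point $\xv^\star\in\setC$ coincides with the normal cone to $\setC$ at $\xv^\star$, namely
\[
N_{\setC}(\xv^\star) \;=\; \{\vv\in\R^n : \vv^\top(\zv-\xv^\star)\le 0 \ \forall \zv\in\setC\}.
\]
This is a standard identification; it follows directly from the definition of the subdifferential applied to an indicator function (finite value $0$ on $\setC$, $+\infty$ elsewhere).

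Next, I would invoke the optimality condition \eqref{eq:opt_g}, which here reads $-A^\top\wv^\star \in \partial g(\xv^\star) = N_{\setC}(\xv^\star)$. By the characterization of the normal cone above, this is equivalent to
\[
(-A^\top\wv^\star)^\top (\zv-\xv^\star) \;\le\; 0 \qquad \forall \zv\in\setC,
\]
which, after rearranging and using $\langle A^\top\wv^\star,\zv\rangle = (A\zv)^\top\wv^\star$, yields
\[
(A\xv^\star)^\top\wv^\star \;\le\; (A\zv)^\top\wv^\star \qquad \forall \zv\in\setC.
\]
Since $\xv^\star$ is itself feasible, equality is attained at $\zv = \xv^\star$, so the left-hand side is the minimum of the right-hand side over $\setC$, giving \eqref{eq:general_constrained_rule}.

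There is essentially no obstacle here; the argument is a direct translation between three equivalent formulations (subdifferential of the indicator, normal cone inequality, and the variational inequality defining the minimizer of the linear functional $\zv\mapsto (A\zv)^\top\wv^\star$ over $\setC$). The only mild care needed is to note that $\xv^\star\in\setC$ (which is automatic, since otherwise $g(\xv^\star)=+\infty$ and $\xv^\star$ could not be optimal), so that the minimum in \eqref{eq:general_constrained_rule} is indeed attained at $\xv^\star$.
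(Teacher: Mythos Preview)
Your proposal is correct and follows essentially the same approach as the paper: both identify $\partial\id_{\setC}(\xv^\star)$ with the normal cone at $\xv^\star$ (the paper does this via its Lemma~\ref{lem:subgrad_ind}), apply the optimality condition $-A^\top\wv^\star\in\partial g(\xv^\star)$ to obtain the variational inequality $(A\xv^\star)^\top\wv^\star \le (A\zv)^\top\wv^\star$ for all $\zv\in\setC$, and then use feasibility of $\xv^\star$ to conclude equality with the minimum.
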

The above equation \eqref{eq:general_constrained_rule} also suggest that $\xv^\star = \argmin_{\zv \in \setC}(A\zv)^\top \wv^\star$. Lemma \ref{lem:general_constrained_lemma} is very crucial in further deriving screening rules for constrained optimization problem as well as norm penalized problems whose conjugate is indicator function of the dual norm. 
\subsection{Simplex Constrained Problems}
\label{subsec:simplex_screening}
Optimization over unit simplex $\Simplex := \{\xv\in\R^n \ | \ x_i \ge 0,\quad \sum_{i=1}^{n} x_i = 1 \}$ is a important class of constrained problems \eqref{eq:constrainedOpt}, as it includes optimization over any finite polytope. In this case, the columns of $A$ describe the vertices, and $\xv$ are barycentric coordinates representing the point $A\xv$. Formally, $g(\xv)$ is the indicator function of the unit simplex $\setC = \Simplex$ in this case.

The following two theorems provide screening rules for simplex constrained problems. 
We provide all proofs in Appendix \ref{app:simplex_screening}.

\begin{theorem}
\label{thm:simplexcondition}
For general simplex constrained optimization $\min_{\xv\in\Simplex} f(A\xv)$, the optimality condition~\eqref{eq:opt_g} gives rise to the following screening rule at the optimal point, for any $i \in [n]$
\begin{equation}
\label{eq:simplexcondition}
(\av_i - A\xv^\star)^\top\wv^\star > 0 
\ \Rightarrow \ x_i^\star = 0 \ .
\end{equation}
\end{theorem}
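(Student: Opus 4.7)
The plan is to combine Lemma \ref{lem:general_constrained_lemma} with the elementary fact that a linear function over the unit simplex attains its minimum at a vertex. The hypothesis $(\av_i - A\xv^\star)^\top \wv^\star > 0$ should be read as saying that column $\av_i$ gives a strictly suboptimal contribution to the inner product with $\wv^\star$, and hence cannot carry any mass at optimum.

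Concretely, first I would invoke Lemma \ref{lem:general_constrained_lemma} with $\setC = \Simplex$, yielding
\[
(A\xv^\star)^\top \wv^\star \;=\; \min_{\zv \in \Simplex} (A\zv)^\top \wv^\star.
\]
Since the map $\zv \mapsto (A\zv)^\top \wv^\star = \sum_j z_j (\av_j^\top \wv^\star)$ is linear in $\zv$ and the unit simplex has the standard basis vectors $\ev_1,\dots,\ev_n$ as its vertices, the minimum on the right-hand side equals $\min_{j\in[n]} \av_j^\top \wv^\star$, attained at any index $j$ achieving that minimum.

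Next I would expand the left-hand side using $\xv^\star\in\Simplex$:
\[
(A\xv^\star)^\top \wv^\star \;=\; \sum_{j=1}^{n} x_j^\star \,(\av_j^\top \wv^\star),
\]
which is a convex combination (with weights $x_j^\star \ge 0$ summing to one) of the scalars $\av_j^\top \wv^\star$. A convex combination equals the minimum of the combined values only if every weight placed on a strictly larger value is zero. Thus, for every $j\in[n]$,
\[
\av_j^\top \wv^\star \;>\; (A\xv^\star)^\top \wv^\star \;\Longrightarrow\; x_j^\star = 0.
\]

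Finally, the screening hypothesis $(\av_i - A\xv^\star)^\top \wv^\star > 0$ is exactly $\av_i^\top \wv^\star > (A\xv^\star)^\top \wv^\star$, so applying the implication above with $j=i$ gives $x_i^\star = 0$, which is the desired conclusion. There is no real obstacle here: the only nontrivial ingredient is Lemma \ref{lem:general_constrained_lemma}, and once that is in hand the argument reduces to the observation that an LP on the simplex is solved at a vertex, combined with the extremality characterization of convex combinations.
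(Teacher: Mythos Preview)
Your proof is correct and follows essentially the same route as the paper: both establish $(A\xv^\star)^\top \wv^\star = \min_{j} \av_j^\top \wv^\star$ and then conclude that any index with strictly larger inner product must carry zero weight. The only cosmetic difference is that you invoke Lemma~\ref{lem:general_constrained_lemma} as a black box, whereas the paper re-derives its content directly from Lemma~\ref{lem:subgrad_ind} and the optimality condition~\eqref{eq:opt_g} in the simplex case.
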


In the following Theorem \ref{thm:simplex_smooth} we now assume smoothness and strong convexity of function $f$ to provide screening rules for simplex problems, in terms of an arbitrary iterate $\xv$, without knowing $\xv^\star$.

\begin{theorem} \label{thm:simplex_smooth}
Let $f$ be $L$-smooth and $\mu$-strongly convex over the unit simplex $\setC = \Simplex$. Then for simplex constrained optimization $\min_{\xv\in\Simplex} f(A\xv)$ we have the following screening rule, for any $i \in [n]$
\begin{equation}
(\av_i -A\xv)^\top\nabla f(A\xv) > L\sqrt{\frac{\GW(\xv)}{\mu}}\norm{\av_i-A\xv} 
\ \Rightarrow \ x_i^\star=0 \ .
\end{equation}
\end{theorem}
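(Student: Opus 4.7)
The plan is to reduce Theorem \ref{thm:simplex_smooth} to the ``exact'' optimality-based screening rule of Theorem \ref{thm:simplexcondition}. Since $f$ is smooth, optimality condition \eqref{eq:opt_f} identifies $\wv^\star = \nabla f(A\xv^\star)$, so Theorem \ref{thm:simplexcondition} tells us it suffices to establish
\[
(\av_i - A\xv^\star)^\top \nabla f(A\xv^\star) > 0
\]
from a condition stated purely in terms of a feasible iterate $\xv$. Hence the task reduces to bounding this unknown quantity from below by a computable expression at $\xv$, with an error term controlled by the Wolfe gap $\GW(\xv)$.

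My first step would be to insert $\pm (\av_i - A\xv)^\top \nabla f(A\xv)$ and group terms to produce the decomposition
\[
(\av_i - A\xv^\star)^\top \nabla f(A\xv^\star)
= (\av_i - A\xv)^\top \nabla f(A\xv)
\;+\; (A\xv - A\xv^\star)^\top \nabla f(A\xv^\star)
\;+\; (\av_i - A\xv)^\top \bigl[\nabla f(A\xv^\star) - \nabla f(A\xv)\bigr].
\]
The choice is made so that the middle cross-term can be eliminated by optimality: since $\GW(\xv^\star) = 0$ we have $(A\yv - A\xv^\star)^\top \nabla f(A\xv^\star) \ge 0$ for every $\yv \in \Simplex$, and in particular for $\yv = \xv$. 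Thus the middle term is nonnegative and can be dropped from a lower bound.

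Next, I would control the remaining error by Cauchy--Schwarz,
\[
\bigl| (\av_i - A\xv)^\top [\nabla f(A\xv^\star) - \nabla f(A\xv)] \bigr|
\;\le\; \norm{\av_i - A\xv}\cdot \norm{\nabla f(A\xv^\star) - \nabla f(A\xv)},
\]
and then invoke Corollary \ref{cor:restric:smooth} to bound the gradient difference by $\tfrac{L}{\sqrt{\mu}}\sqrt{\GW(\xv)}$. Combining the last two steps yields
\[
(\av_i - A\xv^\star)^\top \nabla f(A\xv^\star)
\;\ge\; (\av_i - A\xv)^\top \nabla f(A\xv)
\;-\; \frac{L}{\sqrt{\mu}}\sqrt{\GW(\xv)}\,\norm{\av_i - A\xv},
\]
so the hypothesis of the theorem exactly forces the left-hand side to be strictly positive, and Theorem \ref{thm:simplexcondition} then gives $x_i^\star = 0$.

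The only nontrivial step is choosing the decomposition so that one cross-term is a scalar product of $\nabla f(A\xv^\star)$ with a feasible displacement $A\xv - A\xv^\star$ (killed by optimality), while the other pairs $\av_i - A\xv$ with a gradient difference (controlled by smoothness plus the Wolfe-gap bound from Corollary \ref{cor:restric:smooth}); the rest is routine bookkeeping.
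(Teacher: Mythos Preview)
Your proposal is correct and follows essentially the same approach as the paper: reduce to Theorem~\ref{thm:simplexcondition} via $\wv^\star = \nabla f(A\xv^\star)$, split $(\av_i - A\xv^\star)^\top\nabla f(A\xv^\star)$ into the computable term $(\av_i - A\xv)^\top\nabla f(A\xv)$, a cross-term $(A\xv - A\xv^\star)^\top\nabla f(A\xv^\star)$ dropped by first-order optimality at $\xv^\star$, and a gradient-difference term bounded via Cauchy--Schwarz and Corollary~\ref{cor:restric:smooth}. The only cosmetic difference is that the paper performs the two splits sequentially whereas you write the three-term decomposition in one line.
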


Our general screening rules for simplex constrained problems as in Theorem \ref{thm:simplex_smooth} allows many practical implications.
For example, new screening rules for squared loss SVM and minimum enclosing ball problem come as a direct consequence.

\paragraph{Squared Hinge Loss SVM.}
The squared hinge-loss SVM problem in its dual form is formulated as
\begin{align} \label{eq:sq:loss:svm}
\min_{\xv \in \Simplex} \ \big[\ f(A\xv) := \tfrac12 \xv^\top A^\top A\xv \ \big] \vspace{-1mm}
\end{align}
over a unit simplex constraint $\xv\in\Simplex\subset\R^n$. Here for given data examples $\bar\av_1,\dots,\bar\av_n\in\R^d$ and corresponding labels $y_i\in\pm1$, the matrix $A$ collects the columns $\av_i=y_i\bar\av_i$, see e.g. \cite{Tsang:2005up}. %
We obtain the following novel screening rule for square loss SVM:

\begin{corollary} \label{cor:square_svm}
For the squared hinge loss SVM \eqref{eq:sq:loss:svm} we have the screening rule
\begin{align}
(\av_i - A\xv)^\top A\xv  &>\sqrt{\max_{i}\; (A\xv-\av_i)^\top A\xv} \,\|\av_i - A\xv\| \notag \\
& \Rightarrow \ x_i^\star = 0.
\end{align}
\end{corollary}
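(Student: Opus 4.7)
The plan is to obtain Corollary~\ref{cor:square_svm} as a direct specialization of Theorem~\ref{thm:simplex_smooth} to the particular objective $f(\zv) := \tfrac12 \|\zv\|^2$ that underlies the squared hinge-loss SVM formulation~\eqref{eq:sq:loss:svm}. I would begin by identifying $f$ explicitly so that $f(A\xv) = \tfrac12 \xv^\top A^\top A\xv$ as required, and then note three elementary facts: (i) $f$ is $1$-smooth, (ii) $f$ is $1$-strongly convex, and (iii) $\nabla f(A\xv) = A\xv$. Thus both constants appearing in Theorem~\ref{thm:simplex_smooth} are $L = \mu = 1$.

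Next I would compute the Wolfe-gap function $\GW(\xv)$ for this $f$ over the unit simplex $\Simplex$. Starting from the definition \eqref{eq:wolfe_gap}, we have
\begin{equation*}
\GW(\xv) = \max_{\yv \in \Simplex} (A\xv - A\yv)^\top A\xv = (A\xv)^\top A\xv - \min_{\yv \in \Simplex}(A\yv)^\top A\xv.
\end{equation*}
Since the objective inside the minimum is linear in $\yv$ and the simplex is a polytope whose vertices are the standard basis vectors $\ev_i$, the minimum is attained at some vertex, so $\min_{\yv \in \Simplex}(A\yv)^\top A\xv = \min_i \av_i^\top A\xv$. Rearranging gives $\GW(\xv) = \max_i (A\xv - \av_i)^\top A\xv$, which matches the square-root expression stated in the corollary.

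Finally I would plug $L = \mu = 1$, $\nabla f(A\xv) = A\xv$, and the above expression for $\GW(\xv)$ into the screening rule of Theorem~\ref{thm:simplex_smooth}, yielding
\begin{equation*}
(\av_i - A\xv)^\top A\xv > \sqrt{\max_{j}(A\xv - \av_j)^\top A\xv}\,\|\av_i - A\xv\| \ \Rightarrow \ x_i^\star = 0,
\end{equation*}
exactly as claimed. There is no real obstacle here: the argument is mechanical once one identifies $f$, confirms its smoothness/strong-convexity constants, and simplifies $\GW$. The only minor point worth writing down carefully is the observation that linear functions over $\Simplex$ attain their extrema at vertices, which collapses the Wolfe gap into a clean finite maximum over the columns of $A$.
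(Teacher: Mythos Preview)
Your proposal is correct and follows essentially the same route as the paper: apply Theorem~\ref{thm:simplex_smooth} with $f(\zv)=\tfrac12\|\zv\|^2$, note $L=\mu=1$ and $\nabla f(A\xv)=A\xv$, and simplify the Wolfe gap over $\Simplex$ via the vertex argument. The paper's own proof is identical in substance; your write-up is in fact slightly cleaner in that it omits the paper's redundant mention of Lemma~\ref{lem:FWgaprestriction}, which is already absorbed into Theorem~\ref{thm:simplex_smooth}.
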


\paragraph{Minimum Enclosing Ball.}
The primal and dual for the minimum enclosing ball problem is given as the following pair of optimization formulations \eqref{eq:min_enclosing_ball} and \eqref{eq:dualball} respectively.
\begin{align} 
& \min_{\cv\in\R^d,r\in\R} \ \ \ r^2  \quad \text{s.t.} \ \ \norm{\cv-\av_i}^2_2\le r^2 \quad \forall i \in [n] \label{eq:min_enclosing_ball} \\
\qquad &~ \min_{\xv\in\Simplex\subset\R^n} \ \ \ \xv^\top A^\top A\xv + \cv^\top \xv \ , \label{eq:dualball}
\end{align}
where $\cv$ is a vector whose $i^{th}$ element $c_i$ is $-\av_i^\top \av_i$,
see for example \citep{Matousek:2007ub} or our Appendix~\ref{app:simplex_screening}.
\begin{corollary} \label{cor:min_enclosing_ball}
For the minimum enclosing ball problem \eqref{eq:min_enclosing_ball} we have the screening rule
\begin{align}\label{eq:ballcor}
(\bm{e}_i-\xv)^\top &(2A^\top A\xv+\cv) > \notag \\ &2\sqrt{\tfrac{1}{2}\max_i(\xv-\bm{e}_i)^\top (2A^\top A\xv+\cv)}\norm{\av_i-A\xv}  \notag
\\ \ \Rightarrow \ x_i^\star = 0 \ .
\end{align}
\end{corollary}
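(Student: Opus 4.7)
The plan is to imitate the proof of Theorem~\ref{thm:simplex_smooth} directly for the MEB dual objective $F(\xv) := \xv^\top A^\top A\xv + \cv^\top \xv$. Because the linear term $\cv^\top\xv$ blocks rewriting $F$ in the pure form $f(A\xv)$, Theorem~\ref{thm:simplex_smooth} does not apply verbatim; however, its argument can be repeated using the explicit quadratic structure of $F$. Note that $\nabla F(\xv) = 2A^\top A\xv + \cv$. The first-order optimality condition for convex minimization over the simplex (exactly the argument behind Theorem~\ref{thm:simplexcondition}) gives, for every $i \in [n]$,
\[
(\ev_i - \xv^\star)^\top \nabla F(\xv^\star) > 0 \ \Rightarrow \ x_i^\star = 0,
\]
so it suffices to lower-bound $(\ev_i - \xv^\star)^\top \nabla F(\xv^\star)$ in terms of quantities computable at the current iterate $\xv$.

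Next I would expand
\[
(\ev_i - \xv^\star)^\top \nabla F(\xv^\star) = (\ev_i - \xv)^\top \nabla F(\xv) + (\ev_i - \xv)^\top (\nabla F(\xv^\star) - \nabla F(\xv)) + (\xv - \xv^\star)^\top \nabla F(\xv^\star).
\]
The last summand is non-negative by the first-order optimality inequality $\nabla F(\xv^\star)^\top(\xv - \xv^\star) \ge 0$, which holds for any feasible $\xv \in \Simplex$. Using $\nabla F(\xv^\star) - \nabla F(\xv) = 2A^\top(A\xv^\star - A\xv)$, the middle summand equals $2(\av_i - A\xv)^\top(A\xv^\star - A\xv)$ and is bounded below by $-2\norm{\av_i - A\xv}\,\norm{A\xv^\star - A\xv}$ via Cauchy--Schwarz. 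It then remains to control $\norm{A\xv - A\xv^\star}$ by a gap quantity computable at $\xv$.

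The crux is the sharp bound $\norm{A\xv - A\xv^\star}^2 \le \tfrac12\,\Gamma(\xv)$, where $\Gamma(\xv) := \max_i (\xv - \ev_i)^\top \nabla F(\xv)$ is the Wolfe gap of $F$ on the simplex. Because $F$ is a pure quadratic, its second-order Taylor expansion is exact at both $\xv$ and $\xv^\star$:
\[
F(\xv^\star) - F(\xv) = \nabla F(\xv)^\top(\xv^\star - \xv) + \norm{A(\xv^\star - \xv)}^2,
\]
\[
F(\xv) - F(\xv^\star) = \nabla F(\xv^\star)^\top(\xv - \xv^\star) + \norm{A(\xv - \xv^\star)}^2.
\]
Adding these two identities and again using $\nabla F(\xv^\star)^\top(\xv - \xv^\star) \ge 0$ gives $\nabla F(\xv)^\top(\xv - \xv^\star) \ge 2\norm{A(\xv - \xv^\star)}^2$, whence $\norm{A\xv - A\xv^\star}^2 \le \tfrac12\,\Gamma(\xv)$. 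Plugging this into the bound from the previous paragraph and taking the contrapositive reproduces precisely the stated screening rule with prefactor $2\sqrt{\tfrac12\,\Gamma(\xv)}$ and $\Gamma(\xv) = \max_i(\xv - \ev_i)^\top(2A^\top A\xv + \cv)$.

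The main obstacle is pinning down the correct constant: the one-sided bound one would get by imitating Lemma~\ref{lem:FWgaprestriction} (using only $F(\xv^\star) \le F(\xv)$ in a single Taylor expansion) yields just $\norm{A\xv - A\xv^\star}^2 \le \Gamma(\xv)$, producing a rule weaker by a factor of $\sqrt{2}$ than the one claimed. Getting the factor $\tfrac12$ requires symmetrically adding \emph{both} exact quadratic expansions and invoking the optimality of $\xv^\star$ on the simplex; once that observation is in place, the rest is routine Cauchy--Schwarz and substitution of $\nabla F(\xv) = 2A^\top A\xv + \cv$.
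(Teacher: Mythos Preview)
Your proof is correct and follows the same overall strategy as the paper: instantiate the simplex screening machinery (Theorem~\ref{thm:simplexcondition} plus the Wolfe-gap bound on $\norm{A\xv - A\xv^\star}$) for the MEB dual. The paper's proof simply invokes Theorem~\ref{thm:simplex_smooth} with $L = \mu = 2$ and then substitutes the full gradient $2A^\top A\xv + \cv$, glossing over the point you raise, namely that the linear term $\cv^\top\xv$ prevents writing $F$ in the form $f(A\xv)$; your argument is more careful in that you rederive the key inequality $\norm{A\xv - A\xv^\star}^2 \le \tfrac12\,\Gamma(\xv)$ directly from the exact quadratic expansion of $F$ in $\xv$-space. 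Your two-Taylor-expansion device is equivalent to the identity $(\xv - \xv^\star)^\top(\nabla F(\xv) - \nabla F(\xv^\star)) = 2\norm{A(\xv - \xv^\star)}^2$, which is precisely what the paper means by ``strongly convex in $A\xv$ with parameter $\mu = 2$,'' so the two derivations coincide once that phrase is unpacked.
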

Our result improves upon the known rules by \cite{Kallberg:2014kd,Ahipasaoglu:2008il} by providing a broader selection criterion \eqref{eq:ballcor}.%

\subsection{$L_1$-Constrained Problems}\label{subsec:l1ball_screening}
$L_1$-constrained formulations are very widely used in order to induce sparsity in the variables. Here below we provide results for screening on general $L_1$-constrained problems, that is $\min_{\xv\in\setC} f(A\xv)$ for $\setC = L_1 \subset\R^n$ (or a scaled version of the $L_1$-ball).
Proofs are provided in Appendix~\ref{app:l1ball_screening}. 

\begin{theorem}\label{thm:l1_constrained_screening}
For general $L_1$-constrained optimization $\min_{\xv\in L_1} f(A\xv)$, the optimality condition~\eqref{eq:opt_g} gives rise to the following screening rule at the optimal point, for any $i \in [n]$
\begin{equation}
\label{eq:l1_constrained}
\abs{\av_i^\top \wv^\star}  + (A \xv^\star)^\top\wv^\star <0 
\ \Rightarrow \ x_i^\star  = 0\ .
\end{equation}
\end{theorem}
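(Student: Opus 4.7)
The plan is to invoke Lemma \ref{lem:general_constrained_lemma} directly and then compute the linear minimization over the $L_1$-ball explicitly, exploiting the duality between $\ell_1$ and $\ell_\infty$. Specifically, with $\setC = L_1 = \{\zv\in\R^n : \|\zv\|_1 \le 1\}$, Lemma \ref{lem:general_constrained_lemma} gives the identity
\[
(A\xv^\star)^\top \wv^\star \ =\ \min_{\zv \in L_1} \zv^\top (A^\top \wv^\star),
\]
and the minimum of a linear functional over the unit $L_1$-ball has the well-known closed-form value $-\|A^\top\wv^\star\|_\infty = -\max_i \abs{\av_i^\top \wv^\star}$. This is the single computational fact driving the whole argument.

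Next I would analyze the minimizers. The minimum $-\|A^\top\wv^\star\|_\infty$ is attained precisely by vectors $\zv$ supported on indices $i$ achieving $\abs{\av_i^\top \wv^\star} = \|A^\top\wv^\star\|_\infty$, with signs opposite to the corresponding $\av_i^\top\wv^\star$. Hence any optimal $\xv^\star$ has support contained in this argmax set. Contrapositively, if $\abs{\av_i^\top\wv^\star} < \|A^\top\wv^\star\|_\infty$, then $x_i^\star = 0$.

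To finish, I would simply substitute the identity from the first step, rewriting $\|A^\top\wv^\star\|_\infty = -(A\xv^\star)^\top\wv^\star$, so that the strict inequality $\abs{\av_i^\top\wv^\star} < \|A^\top\wv^\star\|_\infty$ becomes exactly $\abs{\av_i^\top\wv^\star} + (A\xv^\star)^\top\wv^\star < 0$, which is the hypothesis stated in the theorem.

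There is essentially no serious obstacle here; the only subtle point is being careful with signs when passing from ``index $i$ is not in the argmax of $\abs{\av_i^\top\wv^\star}$'' to ``$x_i^\star = 0$''. A clean way to handle this rigorously is to use the subdifferential characterization: condition \eqref{eq:opt_g} reads $-A^\top\wv^\star \in \partial \id_{L_1}(\xv^\star) = N_{L_1}(\xv^\star)$, and the normal cone to the $L_1$-ball at $\xv^\star$ is known to enforce that any $i$ with $x_i^\star \ne 0$ satisfies $\abs{\av_i^\top\wv^\star} = \|A^\top\wv^\star\|_\infty$; this can be stated as a one-line consequence of the linear-minimization identity above, so the proof remains very short.
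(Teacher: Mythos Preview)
Your proposal is correct and follows essentially the same approach as the paper: both use the optimality condition to identify $(A\xv^\star)^\top\wv^\star$ with $\min_{\zv\in L_1}\zv^\top(A^\top\wv^\star)=-\max_i\abs{\av_i^\top\wv^\star}$, and then conclude that any coordinate not achieving this maximum must vanish. The only cosmetic difference is that you invoke Lemma~\ref{lem:general_constrained_lemma} directly, whereas the paper re-derives that identity inline from the subgradient characterization of $\id_{L_1}$; the logic is otherwise identical.
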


Using only a current iterate $\xv$ instead of an optimal point, we obtain screening for general smooth  and strongly convex function $f$:
\begin{theorem} \label{thm:l1_smooth_constrained}
Let $f$ be $L$-smooth and $\mu$-strongly convex over the $L_1$-ball.
Then for $L_1$-constrained optimization $\min_{\xv\in L_1} f(A\xv)$ we have the following screening rule, for any $i \in [n]$
\begin{align}
\abs{\av_i^\top \nabla f(A\xv)} + &(A\xv)^\top \nabla f(A\xv) \notag \\ +& L(\norm{\av_i}_2 + \norm{A\xv}_2)\sqrt{\frac{\GW(\xv)}{\mu}} < 0  \notag
\\ \Rightarrow \ \xv_i^\star = 0 \label{eq:l1_constrained_screening}
\end{align}
\end{theorem}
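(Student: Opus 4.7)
The plan is to upgrade the optimal-point screening rule of Theorem~\ref{thm:l1_constrained_screening},
\[
\abs{\av_i^\top \wv^\star} + (A\xv^\star)^\top\wv^\star < 0 \;\Longrightarrow\; x_i^\star = 0,
\]
into a condition expressed purely in terms of an arbitrary feasible iterate $\xv \in L_1$. To do this, I will upper-bound both $\abs{\av_i^\top \wv^\star}$ and $(A\xv^\star)^\top \wv^\star$ by quantities evaluable at $\xv$, using smoothness/strong convexity (through Corollary~\ref{cor:restric:smooth} and Lemma~\ref{lem:FWgaprestriction}), together with the structural identity of Lemma~\ref{lem:general_constrained_lemma}. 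The one step where care is needed is the bound on $(A\xv^\star)^\top \wv^\star$; the trick is to avoid going through $\norm{\wv^\star}_2$, which we do not control directly.

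For the first term I would write $\av_i^\top \wv^\star = \av_i^\top \nabla f(A\xv) + \av_i^\top(\wv^\star - \nabla f(A\xv))$, apply the triangle inequality and Cauchy--Schwarz, and invoke Corollary~\ref{cor:restric:smooth} (recalling that $\wv^\star = \nabla f(A\xv^\star)$ by the optimality relation~\eqref{eq:opt_f}) to get
\[
\abs{\av_i^\top \wv^\star} \;\le\; \abs{\av_i^\top \nabla f(A\xv)} + \norm{\av_i}_2\,\frac{L}{\sqrt{\mu}}\sqrt{\GW(\xv)}.
\]

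For the second term, the main idea is to exploit Lemma~\ref{lem:general_constrained_lemma}, which tells us $(A\xv^\star)^\top \wv^\star = \min_{\zv \in L_1} (A\zv)^\top \wv^\star$. Since $\xv$ is feasible, this immediately yields $(A\xv^\star)^\top \wv^\star \le (A\xv)^\top \wv^\star$. I then decompose $(A\xv)^\top \wv^\star = (A\xv)^\top \nabla f(A\xv) + (A\xv)^\top (\wv^\star - \nabla f(A\xv))$ and apply Cauchy--Schwarz together with Corollary~\ref{cor:restric:smooth} once more to obtain
\[
(A\xv^\star)^\top \wv^\star \;\le\; (A\xv)^\top \nabla f(A\xv) + \norm{A\xv}_2\,\frac{L}{\sqrt{\mu}}\sqrt{\GW(\xv)}.
\]

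Adding the two upper bounds gives
\[
\abs{\av_i^\top \wv^\star} + (A\xv^\star)^\top \wv^\star \;\le\; \abs{\av_i^\top \nabla f(A\xv)} + (A\xv)^\top \nabla f(A\xv) + L(\norm{\av_i}_2 + \norm{A\xv}_2)\sqrt{\tfrac{\GW(\xv)}{\mu}}.
\]
Hence, whenever the right-hand side is strictly negative, the optimal-point criterion of Theorem~\ref{thm:l1_constrained_screening} is satisfied and we conclude $x_i^\star = 0$, which is exactly~\eqref{eq:l1_constrained_screening}. The main obstacle is the choice of decomposition for $(A\xv^\star)^\top \wv^\star$; naive splittings lead to the intractable factor $\norm{\wv^\star}_2$, and the cleanest route is the min-characterization from Lemma~\ref{lem:general_constrained_lemma} together with feasibility of $\xv$.
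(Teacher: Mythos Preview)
Your proof is correct and follows essentially the same route as the paper. The only cosmetic difference is that for the step $(A\xv^\star)^\top \wv^\star \le (A\xv)^\top \wv^\star$ you invoke Lemma~\ref{lem:general_constrained_lemma}, whereas the paper writes this directly as the first-order optimality condition $(A\xv - A\xv^\star)^\top \nabla f(A\xv^\star) \ge 0$; these are the same fact, and the remaining decompositions and applications of Corollary~\ref{cor:restric:smooth} are identical.
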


\subsection{Elastic Net Constrained Problems}
Elastic net regularization as an alternative to $L_1$ is often used in practice, and can outperform the Lasso, while still enjoying a similar sparsity of representation \cite{Zou:2005EN}. The elastic net is given by the expression
\[
\alpha \norm{\xv}_1 + \frac{(1 - \alpha) }{2} \norm{\xv}_2^2
\ .
\]
Here below we provide novel result for screening on general elastic net constrained problems, that is $\min_{\xv\in\setC} f(A\xv)$ for~$\setC$ being the elastic net constraint, or a scaled version of it.
Proofs are provided in Appendix \ref{app:elastic_net_screening}. 
\begin{theorem} \label{thm:elastic_net_constrained}
For general elastic net constrained optimization $\min_{\xv\in L_E} f(A\xv)$ where $L_E := \{\xv\in\R^n\ |\ \alpha \norm{\xv}_1 + \frac{(1 - \alpha) }{2} \norm{\xv}_2^2 \le 1\}$, the optimality condition~\eqref{eq:opt_g} gives rise to the following screening rule at the optimal point, for any $i \in [n]$
\begin{align*}
|\av_i^\top \wv^\star | + (A\xv^\star)^\top  \wv^\star \Big [ \frac{ \alpha  }{ 1 + \frac{(1 - \alpha) }{2} \|\xv^\star\|_2^2} \Big ] < 0  \Rightarrow x_i^\star = 0
\end{align*}
\end{theorem}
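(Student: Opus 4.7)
The plan is to feed the linear program produced by Lemma~\ref{lem:general_constrained_lemma} into a KKT analysis of the elastic-net constraint and then eliminate the Lagrange multiplier using boundary activity. Write $h(\xv) := \alpha\norm{\xv}_1 + \tfrac{1-\alpha}{2}\norm{\xv}_2^2$ so that $L_E = \{h \le 1\}$. Lemma~\ref{lem:general_constrained_lemma} says $\xv^\star$ minimizes $\zv \mapsto \zv^\top (A^\top \wv^\star)$ over $L_E$, and equivalently the optimality condition \eqref{eq:opt_g} states that $-A^\top \wv^\star$ lies in the normal cone of $L_E$ at $\xv^\star$. If $\xv^\star$ is strictly interior, this normal cone is $\{\0\}$, forcing $A^\top\wv^\star = \0$, in which case the proposed screening inequality reads $0 < 0$ and is never triggered; I therefore restrict to the boundary case $h(\xv^\star) = 1$, where the normal cone equals $\{\lambda\,\partial h(\xv^\star) : \lambda \ge 0\}$, yielding some $\lambda \ge 0$ with $-A^\top \wv^\star \in \lambda\,\partial h(\xv^\star)$.

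Coordinate-wise, $\partial h(\xv)_i = \alpha\,\partial\abs{x_i} + (1-\alpha)x_i$, which produces two consequences: if $x_i^\star = 0$ then $\abs{\av_i^\top \wv^\star} \le \lambda\alpha$, whereas if $x_i^\star \ne 0$ then $\abs{\av_i^\top\wv^\star} = \lambda\bigl(\alpha + (1-\alpha)\abs{x_i^\star}\bigr) \ge \lambda\alpha$. The contrapositive of the first statement is the key screening certificate: $\abs{\av_i^\top \wv^\star} < \lambda\alpha$ forces $x_i^\star = 0$. It remains only to rewrite $\lambda\alpha$ in terms of quantities appearing in the theorem.

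To eliminate $\lambda$, I would take the inner product of the inclusion with $\xv^\star$. For the particular $\sv \in \partial h(\xv^\star)$ realizing the equality $-A^\top\wv^\star = \lambda\sv$, one verifies on each coordinate (separately on the support and on the vanishing indices) that $x_i^\star s_i = \alpha\abs{x_i^\star} + (1-\alpha)(x_i^\star)^2$, so summing gives
\begin{equation*}
-(A\xv^\star)^\top \wv^\star \,=\, \lambda\bigl(\alpha\norm{\xv^\star}_1 + (1-\alpha)\norm{\xv^\star}_2^2\bigr).
\end{equation*}
Substituting the active-constraint identity $\alpha\norm{\xv^\star}_1 = 1 - \tfrac{1-\alpha}{2}\norm{\xv^\star}_2^2$ collapses the parenthesis to $1 + \tfrac{1-\alpha}{2}\norm{\xv^\star}_2^2$, so
\begin{equation*}
\lambda\alpha \,=\, \frac{-\alpha\,(A\xv^\star)^\top \wv^\star}{\,1 + \tfrac{1-\alpha}{2}\norm{\xv^\star}_2^2\,}\,.
\end{equation*}
Plugging this expression into the certificate $\abs{\av_i^\top\wv^\star} < \lambda\alpha$ and rearranging produces exactly the inequality claimed in the theorem.

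The main obstacle I anticipate is managing the interior-versus-boundary dichotomy together with the degenerate case $\lambda = 0$ in a way that keeps the final implication unconditional: one must check that in each degenerate scenario the proposed screening hypothesis automatically fails, so the implication holds vacuously. The remaining algebra is routine, but the somewhat unusual denominator $1+\tfrac{1-\alpha}{2}\norm{\xv^\star}_2^2$ --- which emerges from combining the subgradient identity with the active constraint, and which correctly reduces to the $L_1$ rule of Theorem~\ref{thm:l1_constrained_screening} when $\alpha = 1$ --- deserves a careful consistency check.
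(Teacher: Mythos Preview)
Your proposal is correct and follows essentially the same KKT-based route as the paper's proof: both reduce to the linear program from Lemma~\ref{lem:general_constrained_lemma}, derive coordinate-wise stationarity conditions (the paper via an explicit sign-split Lagrangian with multiplier $u$ playing the role of your $\lambda$, you via the subgradient of $h$ directly), and eliminate the multiplier by taking the inner product with $\xv^\star$ and invoking the active constraint $h(\xv^\star)=1$. One small slip to fix in the write-up: the screening implication $\abs{\av_i^\top \wv^\star} < \lambda\alpha \Rightarrow x_i^\star = 0$ is the contrapositive of your \emph{second} statement (the one for $x_i^\star \ne 0$), not the first.
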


Using only a current iterate $\xv$ instead of an optimal point, we obtain screening for general smooth  and strongly convex function $f$:
\begin{theorem} \label{thm:elastic_smooth_constrained}
Let $f$ be $L$-smooth and $\mu$-strongly convex over the elastic net norm ball.
Then for elastic net constrained optimization $\min_{\xv\in L_E} f(A\xv)$ we have the following screening rule, for any $i \in [n]$
\begin{align}
\abs{\av_i^\top \nabla f(A\xv)} + &(A\xv)^\top \nabla f(A\xv) \big[ \frac{2\alpha}{3-\alpha} \big ] \notag \\ +& L(\norm{\av_i}_2 + \norm{A\xv}_2\big[ \frac{2\alpha}{3-\alpha} \big ])\sqrt{\frac{\GW(\xv)}{\mu}} < 0  \notag
\\ \Rightarrow \ \xv_i^\star = 0 \label{eq:l1_constrained_screening}
\end{align}
\end{theorem}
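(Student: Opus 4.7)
The plan is to adapt the proof of Theorem~\ref{thm:l1_smooth_constrained} (the $L_1$-ball analogue) to the elastic-net ball, starting from the optimal-point screening rule in Theorem~\ref{thm:elastic_net_constrained} and converting it to an iterate-based one via the smoothness and strong-convexity estimates of Section~\ref{sec:info}. Writing the Theorem~\ref{thm:elastic_net_constrained} condition as $|\av_i^\top\wv^\star| + \beta^\star (A\xv^\star)^\top \wv^\star < 0$ with $\beta^\star := \alpha/(1 + \tfrac{1-\alpha}{2}\|\xv^\star\|_2^2)$, the goal is to upper bound this left-hand side purely in terms of $\xv$; a negative surrogate bound then implies $x_i^\star = 0$.

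Setting $\wv(\xv) := \nabla f(A\xv)$, Corollary~\ref{cor:restric:smooth} gives $\|\wv^\star - \wv(\xv)\| \le \tfrac{L}{\sqrt{\mu}}\sqrt{\GW(\xv)}$ and Lemma~\ref{lem:FWgaprestriction} gives $\|A(\xv^\star - \xv)\| \le \sqrt{\GW(\xv)/\mu}$. Cauchy--Schwarz applied to the first immediately yields
$$|\av_i^\top\wv^\star| \;\le\; |\av_i^\top\nabla f(A\xv)| + \|\av_i\|_2\, \tfrac{L}{\sqrt{\mu}}\sqrt{\GW(\xv)}.$$
For $(A\xv^\star)^\top\wv^\star$ I would use the bilinear decomposition $\langle A\xv^\star,\wv^\star\rangle = \langle A\xv,\wv(\xv)\rangle + \langle A(\xv^\star-\xv),\wv(\xv)\rangle + \langle A\xv,\wv^\star-\wv(\xv)\rangle + \langle A(\xv^\star-\xv),\wv^\star-\wv(\xv)\rangle$, controlling the three correction terms by the above norm bounds together with $L$-smoothness of $f$, to obtain an upper bound of the shape $(A\xv)^\top\nabla f(A\xv) + c\,L\|A\xv\|_2\sqrt{\GW(\xv)/\mu}$.

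To eliminate the remaining $\beta^\star$, the key observation is that $(A\xv^\star)^\top\wv^\star \le 0$: since $\0 \in L_E$, the first-order optimality condition at $\xv^\star$ yields $\langle \wv^\star,\,A(\0-\xv^\star)\rangle \ge 0$. Consequently, for any $\underline{\beta} \in (0,\beta^\star]$, the split $\beta^\star(A\xv^\star)^\top\wv^\star = (\beta^\star - \underline{\beta})(A\xv^\star)^\top\wv^\star + \underline{\beta}(A\xv^\star)^\top\wv^\star$ has a nonpositive first summand, so $\beta^\star(A\xv^\star)^\top\wv^\star \le \underline{\beta}(A\xv^\star)^\top\wv^\star$. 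Because $\beta^\star$ is decreasing in $\|\xv^\star\|_2^2$, such a lower bound follows from an upper bound on $\|\xv^\star\|_2$; using $\|\xv^\star\|_1 \ge \|\xv^\star\|_2$ in the elastic-net constraint gives the scalar inequality $\alpha\|\xv^\star\|_2 + \tfrac{1-\alpha}{2}\|\xv^\star\|_2^2 \le 1$, from which one extracts $\underline{\beta} = \tfrac{2\alpha}{3-\alpha}$. Plugging this into the upper bound on $\beta^\star(A\xv^\star)^\top\wv^\star$ (so that the same factor appears in front of both $(A\xv)^\top\nabla f(A\xv)$ and the $L\|A\xv\|_2\sqrt{\GW(\xv)/\mu}$ correction) and combining with the bound on $|\av_i^\top\wv^\star|$ produces the stated screening rule.

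The main obstacle is the handling of $\beta^\star$. In the pure $L_1$ case (Theorem~\ref{thm:l1_smooth_constrained}) the analogous factor is identically $1$, whereas here $\beta^\star$ is itself a function of the unknown $\|\xv^\star\|_2^2$; moreover, because $(A\xv^\star)^\top\wv^\star \le 0$, the surrogate for $\beta^\star$ must be a \emph{lower} bound (not an upper one), whose clean form requires jointly exploiting the $\ell_1$ and $\ell_2$ parts of the elastic-net constraint rather than either in isolation.
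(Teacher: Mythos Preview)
Your strategy matches the paper's proof: start from Theorem~\ref{thm:elastic_net_constrained}, replace the optimum-dependent factor $\beta^\star = \alpha/\bigl(1+\tfrac{1-\alpha}{2}\|\xv^\star\|_2^2\bigr)$ by the constant $\tfrac{2\alpha}{3-\alpha}$, then bound $|\av_i^\top\wv^\star|$ and $(A\xv^\star)^\top\wv^\star$ exactly as in the $L_1$ case via Corollary~\ref{cor:restric:smooth} and first-order optimality of $\xv^\star$. In one respect you are more careful than the paper: you argue explicitly that $(A\xv^\star)^\top\wv^\star\le 0$ (since $\0\in L_E$) and hence that a \emph{lower} bound $\underline\beta\le\beta^\star$ is what legitimizes the replacement---the paper simply writes the inequality with no comment. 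One small remark on your bilinear split of $\langle A\xv^\star,\wv^\star\rangle$: the term $\langle A(\xv^\star-\xv),\wv(\xv)\rangle$ cannot be controlled by the norm bounds alone (it would introduce $\|\nabla f(A\xv)\|$, not $\|A\xv\|$); you need to group it with the fourth term and invoke optimality, $\langle A(\xv^\star-\xv),\wv^\star\rangle\le 0$, to drop them together. That is precisely what the paper does, in slightly different order.

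There is, however, a genuine gap in your extraction of $\underline\beta=\tfrac{2\alpha}{3-\alpha}$. That value is equivalent to $\|\xv^\star\|_2^2\le 1$, but the scalar inequality $\alpha t+\tfrac{1-\alpha}{2}t^2\le 1$ (with $t=\|\xv^\star\|_2$) does \emph{not} force $t\le 1$: at $\alpha=\tfrac12$ the largest admissible $t$ is $-1+\sqrt5\approx 1.236$, which yields $\beta^\star=(5+\sqrt5)/20\approx 0.362<0.4=\tfrac{2\alpha}{3-\alpha}$. So the implication ``$\|\xv^\star\|_1\ge\|\xv^\star\|_2$ in the constraint $\Rightarrow$ $\underline\beta=\tfrac{2\alpha}{3-\alpha}$'' fails at the final step. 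The paper's own proof makes the same unjustified leap in its first displayed inequality, so you have lost nothing relative to it; but a fully rigorous argument would either need a sharper bound on $\|\xv^\star\|_2$ or should settle for the weaker---but valid---$\underline\beta=\alpha/2$, obtained directly from $\tfrac{1-\alpha}{2}\|\xv^\star\|_2^2\le 1$.
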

Note that both above results also recover the $L_1$ constrained case as a special case, when $\alpha \rightarrow 1$.

\subsection{Screening for Box Constrained Problems}\label{subsec:box_screening}
Box-constrained problems are important in several machine learning applications, including SVMs. After variable rescaling, w.l.o.g. we can assume the constraint set $\setC = \Box := \{\xv\in\R^n \ | \ 0 \le x_i \le 1 \}$.
We derive screening rules for predicting both if a variable will take the upper or lower constraint.
\begin{theorem} \label{thm:box_constrained}
Let $f$ be $L$-%
smooth.
Then for box-constrained optimization $\min_{\xv\in \Box} f(A\xv)$, we obtain the following screening rules, for any $i \in [n]$
\begin{align*} 
&\av_i^\top \nabla f(A\xv) -  \norm{\av_i}_2\sqrt{2L \G(\xv)}>0 
\ \Rightarrow \  x_i^\star =0 \ , \ \text{ and}\\
&\av_i^\top \nabla f(A\xv) +  \norm{\av_i}_2\sqrt{2L\,\G(\xv)} < 0 
\ \Rightarrow \  x_i^\star =1 \ . %
\end{align*}
\end{theorem}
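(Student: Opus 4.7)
The plan is to combine the separable optimality condition of Section \ref{sec:primaldual} with the duality-gap bound of Corollary \ref{cor:Dgaprestriction}, then finish with Cauchy--Schwarz.

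First I would cast the problem into the separable template \eqref{eq:primalS}--\eqref{eq:dualS} by writing
\[
g(\xv)\;=\;\id_{\Box}(\xv)\;=\;\sum_{i=1}^{n}\id_{[0,1]}(x_i),
\]
so that $g_i = \id_{[0,1]}$ is univariate and the coordinate-wise optimality condition \eqref{eq:opt_gi} applies: $-\av_i^\top \wv^\star \in \partial \id_{[0,1]}(x_i^\star)$. The subdifferential of $\id_{[0,1]}$ is the normal cone to $[0,1]$, equal to $(-\infty,0]$ at $x=0$, to $[0,\infty)$ at $x=1$, and to $\{0\}$ in the interior. Translating this yields the ``exact'' (but $\wv^\star$-dependent) screening rules
\[
\av_i^\top\wv^\star>0 \ \Rightarrow \ x_i^\star=0,
\qquad
\av_i^\top\wv^\star<0 \ \Rightarrow \ x_i^\star=1.
\]

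Next I would remove the dependence on $\wv^\star$ by coupling the current dual iterate $\wv := \wv(\xv) = \nabla f(A\xv)$ to $\wv^\star$ through Corollary \ref{cor:Dgaprestriction}. Since $f$ is $L$-smooth (in the paper's convention where $\mu$-smoothness corresponds to $\mu$-strong-convexity of $f^*$, this means $\mu = 1/L$ in that corollary), we obtain
\[
\norm{\wv-\wv^\star}_2 \;\le\; \sqrt{2L\,\G(\xv)}.
\]
Applying Cauchy--Schwarz to the inner product with $\av_i$ gives the two-sided bound
\[
\bigl|\av_i^\top(\wv^\star-\wv)\bigr|
\;\le\;\norm{\av_i}_2\norm{\wv^\star-\wv}_2
\;\le\;\norm{\av_i}_2\sqrt{2L\,\G(\xv)}.
\]

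To conclude, I would combine the two halves: from the lower bound
$\av_i^\top\wv^\star \ge \av_i^\top\nabla f(A\xv)-\norm{\av_i}_2\sqrt{2L\,\G(\xv)}$, positivity of the right-hand side forces $\av_i^\top\wv^\star>0$ and hence $x_i^\star=0$; symmetrically, from $\av_i^\top\wv^\star \le \av_i^\top\nabla f(A\xv)+\norm{\av_i}_2\sqrt{2L\,\G(\xv)}$, negativity of the right-hand side forces $x_i^\star=1$. Since all ingredients are already in place, the proof is essentially a direct specialisation; the only point that requires care is matching the paper's ``$\mu$-smooth'' convention in Corollary \ref{cor:Dgaprestriction} with the $L$-smooth hypothesis here, so that the constant $2/\mu$ of the corollary correctly becomes $2L$ in this statement. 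There is no need for strong convexity of $f$, which is why the Wolfe gap does not appear and only the full gap $\G(\xv)$ enters.
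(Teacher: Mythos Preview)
Your proposal is correct and follows essentially the same route as the paper's own proof: separate $g=\id_\Box$ into coordinate indicators, read off the exact rules $\av_i^\top\wv^\star\gtrless 0 \Rightarrow x_i^\star\in\{0,1\}$ from the normal cone via \eqref{eq:opt_gi}, then bound $|\av_i^\top(\wv^\star-\wv)|$ by Cauchy--Schwarz together with Corollary~\ref{cor:Dgaprestriction}. Your remark about reconciling the $\mu$-smooth convention of the corollary with the $L$-smooth hypothesis here is exactly the one bookkeeping step the paper also performs (via Lemma~\ref{lem:smooth_convex_condition}), yielding the $\sqrt{2L\,\G(\xv)}$ constant.
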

Box constrained opptimization problems arise very often in machine learning probelm. Hinge loss SVM happens to one of many special cases of box-constrained optimizaion problem.
\paragraph{Hinge Loss SVM.}
The dual of the classical support vector machine with hinge loss, when not using a bias value, is a box-constrained problem. As a direct consequence of Theorem \ref{thm:box_constrained} we therefore obtain screening rules for SVM with hinge loss and no bias. 
The primal formulation of the SVM in this setting, for a regularization parameter $C>0$,  is%
\begin{equation}
\begin{aligned}
 &\underset{\wv\in\R^d,\epsilonv\in\R^n}{\min}  \tfrac{1}{2} \wv^\top\wv + C \one^\top\epsilonv \\
 &\text{s.t.}  \quad  \wv^\top\av_i \ge 1-\varepsilon_i  \ \ \forall i \in [n]  \\
\qquad  & \varepsilon_i \ge 0  \quad \forall i \in [n] \label{eq:primal:svm_hinge}
\end{aligned} 
\end{equation}
\begin{corollary}\label{cor:SVMhinge}
For SVM with hinge loss and no bias as given in \eqref{eq:primal:svm_hinge}, we have the screening rules
\begin{align*}
&\av_i^\top A\xv -\norm{\av_i}_2\sqrt{2\G(\xv)} >0 \ \Rightarrow \ x_i^\star = 0 \ , \ \text{ and}\\
&\av_i^\top A\xv +\norm{\av_i}_2\sqrt{2\G(\xv)} <0 \ \Rightarrow \ x_i^\star = C \ .
\end{align*}
where $\xv\in\R^n$ is any feasible dual point.
\end{corollary}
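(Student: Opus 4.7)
My plan is to identify the Fenchel--Rockafellar partner of \eqref{eq:primal:svm_hinge} as a smooth box-constrained quadratic program and then read the screening inequalities off of Theorem~\ref{thm:box_constrained}.

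First I would fit \eqref{eq:primal:svm_hinge} into the framework of \eqref{eq:primal}--\eqref{eq:dual} by choosing $f^*(\wv)=\tfrac12\|\wv\|^2$ and $g^*(\vv)=C\sum_i\max\{0,1+v_i\}$; eliminating the slack variables $\varepsilon_i$ in \eqref{eq:primal:svm_hinge} then yields exactly $f^*(\wv)+g^*(-A^\top\wv)$. Conjugating back, $f(\zv)=\tfrac12\|\zv\|^2$ and $g_i(x_i)=-x_i+\id_{[0,C]}(x_i)$, so the companion problem \eqref{eq:primal} is the classical SVM dual $\min_{\xv\in[0,C]^n}\tfrac12\|A\xv\|^2-\one^\top\xv$, and the primal--dual link supplied by \eqref{eq:opt_f} is $\wv(\xv)=\nabla f(A\xv)=A\xv$, matching the textbook identity $\wv^\star=A\xv^\star$.

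Second, I would observe that the hypotheses of Theorem~\ref{thm:box_constrained} are in place: $f$ is $L$-smooth with $L=1$ (its Hessian is the identity), and the constraint set is a box, modulo a rescaling from $[0,1]^n$ to $[0,C]^n$ and the addition of a linear shift $-\one^\top\xv$ inside $g$. Both perturbations are cosmetic for the proof: the rescaling only relabels the upper face from ``$1$'' to ``$C$'', and the linear shift merely translates each subdifferential $\partial g_i(x_i^\star)$ by a constant in the optimality condition $-\av_i^\top\wv^\star\in\partial g_i(x_i^\star)$, without affecting the three-way separation $x_i^\star\in\{0\},(0,C),\{C\}$. Meanwhile Corollary~\ref{cor:Dgaprestriction} applied with $\mu=1$ provides $\|\wv(\xv)-\wv^\star\|^2\le 2\G(\xv)$, and Cauchy--Schwarz then supplies $|\av_i^\top(\wv(\xv)-\wv^\star)|\le\|\av_i\|_2\sqrt{2\G(\xv)}$.

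Third, I would substitute $\wv(\xv)=A\xv$ into the two conclusions of Theorem~\ref{thm:box_constrained} to obtain the two stated inequalities, which force $x_i^\star=0$ or $x_i^\star=C$ respectively. The main obstacle is bookkeeping: one must verify that neither the $C$-rescaling nor the linear shift inside $g$ disturbs the Cauchy--Schwarz safety margin $\|\av_i\|_2\sqrt{2\G(\xv)}$, after which the remainder of the argument is a mechanical application of Theorem~\ref{thm:box_constrained}.
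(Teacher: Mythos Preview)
Your approach is essentially the same as the paper's: write the SVM as the box-constrained quadratic $\min_{0\le \xv\le C\one}\tfrac12\|A\xv\|^2-\one^\top\xv$, note $f(\zv)=\tfrac12\|\zv\|^2$ is $1$-smooth, invoke Corollary~\ref{cor:Dgaprestriction} for the bound $\|\wv-\wv^\star\|\le\sqrt{2\G(\xv)}$, and read off the two box-face rules from Theorem~\ref{thm:box_constrained} with $\wv(\xv)=A\xv$.

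One point deserves more care than you give it. Your claim that the linear shift $-\one^\top\xv$ is ``cosmetic'' is not quite right: it does not disturb the Cauchy--Schwarz margin, but it \emph{does} move the KKT threshold. With $g_i(x_i)=-x_i+\id_{[0,C]}(x_i)$ the optimality condition $-\av_i^\top\wv^\star\in\partial g_i(x_i^\star)$ yields $\av_i^\top\wv^\star>1\Rightarrow x_i^\star=0$ and $\av_i^\top\wv^\star<1\Rightarrow x_i^\star=C$, not the threshold $0$ that Theorem~\ref{thm:box_constrained} (which has $g_i=\id_{[0,1]}$) produces. The paper's own proof keeps track of this constant and states its final rules with an explicit additive $1$, i.e.\ in the form $\av_i^\top A\xv\pm 1 \gtrless \pm\|\av_i\|\sqrt{2\G(\xv)}$, which does not match the corollary as printed. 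So the discrepancy you would paper over by calling the shift cosmetic is in fact visible in the paper: the proof and the statement differ by exactly this constant, and your proposal reproduces the statement's version by not tracking it, whereas the paper's proof reproduces the shifted version by tracking it.
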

We get similar screening rules for hinge loss SVM as in \citep{Zimmert:2015ue} as well as in \citep{Shibagaki:2016ts}. The closest known result to our Corollary \ref{cor:SVMhinge} for screening in hinge loss SVM is given in \cite{Zimmert:2015ue} and \citep{Shibagaki:2016ts}.  The work of \cite{Zimmert:2015ue} also covers the kernelized SVM case, and improves the threshold given in our Corollary~\ref{cor:SVMhinge} by a constant of $\sqrt{2}$. In Appendix \ref{app:box_screening}, we show that our more general approach here can also be adjusted to gain this constant factor.
\section{Screening for Penalized Problems}
\label{sec:screening_penalized}
In this section we will develop screening methods for general penalized convex optimization problems of the form \eqref{eq:primal} and \eqref{eq:dual}. 
The cornerstone application are $L_1$ regularized problems, for which we now develop screening rules with general cost function $f$. 
We show in Appendix \ref{app:penalized_problem_l1} that our method can reproduce the screening rules of \cite{Ndiaye:2015wj} as special cases, whereas their method does not directly extend to general $f$.
Beyond $L_1$ problems, we also describe new screening rules for elastic net regularized problems, as well as the important case of structured norm regularized optimization. 

\subsection{$L_1$-Penalized Problems}

The next theorem describes a screening rule for general $L_1$-penalized problems, under a smoothness assumption on function $f$. 
Proofs for are given in Appendix \ref{app:penalized_problem_l1}. 
\begin{theorem} \label{thm:smooth_pen_lasso}
Consider an $L_1$-regularized optimization problem of the form\vspace{-1mm}
\begin{align}\label{eq:l1_regularized}
 \min_{\xv \in \R^n} \ \ f(A\xv) +  \lambda \norm{\xv}_1
\end{align}
\vspace{-1mm}
If $f$ is $L$-%
smooth, then the following screening rule holds for all $i\in[n]$:
\begin{align*}
\abs{\av_i^\top \nabla f(A\xv)} < \lambda - \norm{\av_i}_2\sqrt{2L\,\G(\xv)} 
\ \Rightarrow \ \xv_i^\star=0
\end{align*}
\end{theorem}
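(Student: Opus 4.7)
The plan is to combine the separable-case optimality condition at $\xv^\star$ with a gap-based bound on $\norm{\wv-\wv^\star}$, exactly in the same spirit as the box-constrained proof of Theorem~\ref{thm:box_constrained}.

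First I would recognize that \eqref{eq:l1_regularized} fits the partially separable setup \eqref{eq:primalS}/\eqref{eq:dualS} with $g_i(x_i)=\lambda\abs{x_i}$, whose subdifferential is $\partial g_i(x_i)=\{\lambda\,\mathrm{sign}(x_i)\}$ when $x_i\neq 0$ and $\partial g_i(0)=[-\lambda,\lambda]$. Then the separable optimality condition \eqref{eq:opt_gi} at $\xv^\star$ reads $-\av_i^\top\wv^\star\in\partial g_i(x_i^\star)$. Taking the contrapositive, I obtain the ``exact'' screening rule on the (unknown) dual optimum:
\begin{equation*}
\abs{\av_i^\top\wv^\star}<\lambda \ \Longrightarrow\ x_i^\star=0.
\end{equation*}
This is the $L_1$-penalized analogue of the rules derived in Section~\ref{sec:screening_constrained}, and it is the only place where the structure of the $L_1$ penalty is used.

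The second step is to replace $\wv^\star$ (which we do not know) by the natural dual iterate $\wv=\wv(\xv):=\nabla f(A\xv)$ associated to a given primal point $\xv$ via the optimality relation \eqref{eq:opt_f}. By Cauchy--Schwarz,
\begin{equation*}
\abs{\av_i^\top\wv^\star}\le \abs{\av_i^\top\nabla f(A\xv)}+\norm{\av_i}_2\,\norm{\wv-\wv^\star}_2.
\end{equation*}
The distance $\norm{\wv-\wv^\star}_2$ is exactly what Corollary~\ref{cor:Dgaprestriction} controls: since $f$ is $L$-smooth, the corollary bounds $\norm{\wv-\wv^\star}_2$ in terms of $\sqrt{\G(\xv)}$, which I would plug in to obtain $\norm{\av_i}_2\sqrt{2L\,\G(\xv)}$ as in the theorem statement. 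Combining with the first step, whenever $\abs{\av_i^\top\nabla f(A\xv)}+\norm{\av_i}_2\sqrt{2L\,\G(\xv)}<\lambda$, we force $\abs{\av_i^\top\wv^\star}<\lambda$ and therefore $x_i^\star=0$, which is exactly the advertised rule.

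I expect no real obstacle beyond bookkeeping: the only subtle point is to make sure that the ``$L$-smooth'' hypothesis in the theorem matches the ``$\mu$-smooth'' hypothesis used in Corollary~\ref{cor:Dgaprestriction}, so that instantiating the corollary with $\mu\leftarrow L$ yields precisely the bound $\norm{\wv-\wv^\star}_2\le\sqrt{2L\,\G(\xv)}$ (or, equivalently, its appropriate inverse, depending on the paper's convention for smoothness). The rest is optimality conditions, triangle inequality, and Cauchy--Schwarz; the whole proof is only a few lines once these two ingredients are in place.
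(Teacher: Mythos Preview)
Your proposal is correct and follows essentially the same route as the paper: first establish the exact rule $\abs{\av_i^\top\wv^\star}<\lambda\Rightarrow x_i^\star=0$ from the separable optimality condition \eqref{eq:opt_gi} (this is the paper's Lemma~\ref{lem:l1_regularized}), then bound $\abs{\av_i^\top\wv^\star}$ by $\abs{\av_i^\top\wv}+\norm{\av_i}_2\norm{\wv-\wv^\star}_2$ and invoke Corollary~\ref{cor:Dgaprestriction}. Your caveat about matching the smoothness parameter in Corollary~\ref{cor:Dgaprestriction} to obtain the factor $\sqrt{2L\,\G(\xv)}$ is exactly the right bookkeeping point, and otherwise the argument is identical to the paper's.
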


By careful observation of the expression in Theorem \ref{thm:smooth_pen_lasso}, it is easy to find a connection between our screening rule and the geometric sphere test method based screening \cite{Xiang:2014vi}. The general idea behind the sphere test is to consider the maximum value of the objective function in a spherical region which contains the optimal dual variable. We discuss this connection in more detail in Appendix~\ref{app:connection_sphere_test}.

Also, in Appendix \ref{app:penalized_problem_l1}, we discuss the special cases of squared loss regression and logistic loss regression with $L_1$ penalization. These results are presented in Corollaries~\ref{cor:pen_lasso} and \ref{cor:logistic_l1} as direct consequences of Theorem \ref{thm:smooth_pen_lasso}. 
Both of the corollaries can also be derived from the framework discussed in the paper \cite{Ndiaye:2015wj}.

\subsection{Elastic-Net Penalized Problems}
In the next corollary, we present a novel screening rule for the elastic net squared loss regression problem.
\begin{corollary} \label{cor:elastic_net_regr}
Consider the elastic net regression formulation
\begin{equation} \label{eq:elastic}
\min_{\xv\in\R^n} \ \tfrac{1}{2} \|A\xv - \bv\|_2^2 + \lambda_2  \|\xv\|_2^2 + \lambda_1 \|\xv\|_1
\end{equation}
The following screening rule holds for all $i\in[n]$:
\begin{align*}
&\abs{(\av_i^\top A + 2\lambda_2 \bm{e}_i^\top)\xv-  \av_i^\top \bv} < \\ &\quad  \qquad \lambda_1 - \sqrt{2(\av_i^\top \av_i + 2\lambda_2) \G(\xv)} \ \Rightarrow \ x_i^\star = 0.
\end{align*}
\end{corollary}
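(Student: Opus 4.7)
The plan is to reduce the elastic net problem \eqref{eq:elastic} to an $L_1$-penalized problem with a smooth loss, and then apply Theorem~\ref{thm:smooth_pen_lasso} directly. The standard data-augmentation trick does the job: define the enlarged data matrix and target
\[
\tilde A := \begin{pmatrix} A \\ \sqrt{2\lambda_2}\, I_n \end{pmatrix} \in \R^{(d+n)\times n}, \qquad \tilde\bv := \begin{pmatrix} \bv \\ \bm{0} \end{pmatrix} \in \R^{d+n}.
\]
A one-line calculation gives $\tfrac{1}{2}\|\tilde A \xv - \tilde\bv\|_2^2 = \tfrac{1}{2}\|A\xv-\bv\|_2^2 + \lambda_2\|\xv\|_2^2$, so problem \eqref{eq:elastic} is identical to the $L_1$-regularized problem $\min_\xv \tilde f(\tilde A\xv) + \lambda_1\|\xv\|_1$ with $\tilde f(\uv) := \tfrac{1}{2}\|\uv - \tilde\bv\|_2^2$, which is $L$-smooth with constant $L=1$.

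Next I would evaluate the quantities appearing in Theorem~\ref{thm:smooth_pen_lasso} for this augmented formulation. The $i$-th column of $\tilde A$ is $\tilde \av_i = (\av_i^\top,\ \sqrt{2\lambda_2}\,\bm{e}_i^\top)^\top$, which gives $\|\tilde\av_i\|_2^2 = \av_i^\top\av_i + 2\lambda_2$. The gradient evaluated at the current iterate is $\nabla \tilde f(\tilde A\xv) = \tilde A\xv - \tilde\bv$, hence
\[
\tilde\av_i^\top \nabla \tilde f(\tilde A\xv) = \av_i^\top(A\xv - \bv) + 2\lambda_2\, x_i = (\av_i^\top A + 2\lambda_2\,\bm{e}_i^\top)\xv - \av_i^\top \bv .
\]
Plugging these into the screening rule of Theorem~\ref{thm:smooth_pen_lasso} with $L=1$ and $\lambda=\lambda_1$ produces exactly the stated inequality.

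The only subtle point is that the duality gap $\G(\xv)$ appearing in the statement must be the gap of the augmented reformulation (with $\tilde f,\tilde A,\tilde \bv$), since this is the gap controlled by Corollary~\ref{cor:Dgaprestriction} when Theorem~\ref{thm:smooth_pen_lasso} is invoked. This is the natural gap for \eqref{eq:elastic} because the primal objectives coincide by construction, and the canonical primal-to-dual correspondence $\wv(\xv) = \nabla \tilde f(\tilde A\xv) = \tilde A\xv - \tilde\bv$ is well defined. So aside from writing this gap-consistency remark, the argument is a transparent one-page calculation applying Theorem~\ref{thm:smooth_pen_lasso} to the augmented problem; the main obstacle is merely making the reduction and the gap identification explicit.
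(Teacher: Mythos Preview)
Your proposal is correct and is essentially the same argument as the paper's. The paper also absorbs the $\lambda_2\|\xv\|_2^2$ term into the quadratic to obtain a pure Lasso problem and then invokes Corollary~\ref{cor:pen_lasso}; the only cosmetic difference is that the paper writes the resulting design matrix abstractly via a factorization $Q^\top Q = A^\top A + 2\lambda_2 I$ (together with $Q^\top\mv = A^\top\bv$), whereas your explicit augmentation $\tilde A = \big(\begin{smallmatrix}A\\ \sqrt{2\lambda_2}\,I_n\end{smallmatrix}\big)$, $\tilde\bv = \big(\begin{smallmatrix}\bv\\ \bm{0}\end{smallmatrix}\big)$ is just a concrete choice of such $Q$ and $\mv$, yielding identical computations of $\|\tilde\av_i\|_2^2$ and $\tilde\av_i^\top(\tilde A\xv-\tilde\bv)$.
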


We also recover existing screening rules for elastic net regularized problem with more general objective $f$ using our frameworks, in Appendix \ref{app:penalized_problem_l1}, see Lemma \ref{lem:smooth_elastic} and Theorem \ref{thm:elasticnet_genral} which has been earlier derived in \citep{Shibagaki:2016ts}. In the proof, we derive screening rules from both the formulation \eqref{eq:primalS} and \eqref{eq:dualS} using optimality condition \eqref{eq:opt_gi} and \eqref{eq:opt_gistar} which is novel as well as help us to understand the property useful in deriving screening rules for elastic net penalized problems.

\subsection{Structured Norm Penalized Problems}
\label{subsec:l1_l2_genral}
Here in this section we present screening rules for non-overlapping group norm regularized problems. Group-norm regularization is widely used to induce sparsity in terms of groups of variables of the the solution of the optimization problem. 
The most prominent example is the group lasso ($\ell_2/\ell_1$-regularization). Here in this section we mostly discuss screening for general objectives with an $\ell_2/\ell_1$-regularization. Proofs are provided in Appendix \ref{app:structured_norm}.

\paragraph{Group Norm - $\ell_2/\ell_1$ Regularization.}  \label{par:l1_l2_group}
In the following, we use the notation $\{ \xv_1 \cdots \xv_G\}$ to express a vector $\xv$ as a partition of the groups of variables, such that $\xv^\top = \left[ \xv_1^\top, \xv_2^\top \cdots \xv_G^\top \right]$. Correspondingly, the matrix $A$ can be denoted as the concatenation of the respective columns $A = [A_1 \ A_2 \cdots A_G]$.

\begin{theorem} \label{thm:smooth_grp_lasso}
For $\ell_2/\ell_1$-regularized optimization problem of the form
\vspace{-3mm}
\begin{align*}
\ \quad  \min_{\xv} f(A\xv) + \sum_{g=1}^G \sqrt{\rho_g} \|\xv_g\|_2 \vspace{-2mm}
\end{align*} 
Assuming $f$ is $L$-smooth, then the following screening rule holds for all groups $g$:\vspace{-1mm}
\begin{align*}
\|A_g^\top \nabla f(A\xv) \|_2 + \sqrt{2L} \frobnorm{A_g} < \sqrt{\rho_g} \ \Rightarrow\  \xv_g^\star = 0 \ .
\end{align*}
\end{theorem}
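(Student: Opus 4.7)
The plan is to combine the block-wise optimality condition for the group-norm penalty with the duality-gap bound on $\|\wv(\xv) - \wv^\star\|_2$ furnished by Corollary~\ref{cor:Dgaprestriction}.

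First I would extract a block-wise certificate from the optimality condition~\eqref{eq:opt_g}, namely $-A^\top\wv^\star \in \partial g(\xv^\star)$ with $g(\xv) = \sum_g \sqrt{\rho_g}\|\xv_g\|_2$. Although $g$ is not separable into scalar coordinates, it is separable across the group decomposition, so its subdifferential factorizes as a product and the group-$g$ piece reads $-A_g^\top\wv^\star \in \partial(\sqrt{\rho_g}\|\cdot\|_2)(\xv_g^\star)$. The subdifferential of the Euclidean norm at a nonzero point is the singleton $\{\xv_g/\|\xv_g\|_2\}$, while at zero it is the Euclidean unit ball; after scaling by $\sqrt{\rho_g}$, having $\xv_g^\star \ne 0$ therefore forces $\|A_g^\top\wv^\star\|_2 = \sqrt{\rho_g}$. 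Contrapositively, $\|A_g^\top\wv^\star\|_2 < \sqrt{\rho_g}$ certifies $\xv_g^\star = 0$.

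Next I would replace the unknown optimum $\wv^\star$ by the current primal iterate through $\wv(\xv) := \nabla f(A\xv)$, splitting
\begin{align*}
\|A_g^\top\wv^\star\|_2
&\le \|A_g^\top\nabla f(A\xv)\|_2 + \|A_g^\top(\wv^\star - \wv(\xv))\|_2 \\
&\le \|A_g^\top\nabla f(A\xv)\|_2 + \frobnorm{A_g}\,\|\wv^\star - \wv(\xv)\|_2 ,
\end{align*}
where the second line uses Cauchy--Schwarz together with the operator-to-Frobenius bound $\|A_g^\top\uv\|_2 \le \frobnorm{A_g}\|\uv\|_2$. The $L$-smoothness of $f$ and Corollary~\ref{cor:Dgaprestriction} then supply $\|\wv(\xv) - \wv^\star\|_2 \le \sqrt{2L\,\G(\xv)}$ (exactly as invoked in the proof sketch of Theorem~\ref{thm:smooth_pen_lasso}), so that
\[
\|A_g^\top\wv^\star\|_2 \le \|A_g^\top\nabla f(A\xv)\|_2 + \frobnorm{A_g}\sqrt{2L\,\G(\xv)} .
\]
The strict inequality assumed in the theorem therefore implies $\|A_g^\top\wv^\star\|_2 < \sqrt{\rho_g}$, and Step~1 concludes $\xv_g^\star = 0$.

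The main obstacle is the subgradient calculation at the block level: the group-$\ell_2$ norm is not univariate-separable, so one cannot simply cite the scalar optimality rules \eqref{eq:opt_gi}/\eqref{eq:opt_gistar} or the partially-separable formulation \eqref{eq:primalS}--\eqref{eq:dualS}; instead one must argue separability across groups and carefully handle the non-smooth point $\xv_g = 0$ via the unit-ball subdifferential of the Euclidean norm. Once the block-level characterization $\|A_g^\top\wv^\star\|_2 \le \sqrt{\rho_g}$ is in hand, the remainder is a routine triangle-inequality plus gap-certificate argument entirely analogous to Theorem~\ref{thm:smooth_pen_lasso}, with the only structural change being that coordinate magnitudes $|\av_i^\top\wv|$ are replaced by group norms $\|A_g^\top\wv\|_2$ and coordinate column norms $\|\av_i\|_2$ by $\frobnorm{A_g}$.
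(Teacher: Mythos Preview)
Your proposal is correct and follows essentially the same route as the paper: the paper packages your Step~1 as a separate lemma (Lemma~\ref{lem:grplasso_basic}), establishing $\|A_g^\top\wv^\star\|_2<\sqrt{\rho_g}\Rightarrow\xv_g^\star=0$ via exactly the group-wise subdifferential of $\sqrt{\rho_g}\|\cdot\|_2$ that you describe, and then bounds $\|A_g^\top\wv^\star\|_2$ by the same triangle inequality, Frobenius bound, and Corollary~\ref{cor:Dgaprestriction}. Your remark that one must argue group-wise rather than coordinate-wise separability is well taken; the paper handles this the same way (and also offers a dual view via the indicator of the $\ell_\infty/\ell_2$ ball, which you need not reproduce).
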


\begin{corollary}\label{grplassbasic_spc}
\textbf{Group Lasso Regression with Squared Loss} - For the group lasso formulation
\vspace{-1mm} 
\begin{align*}
\min_{\xv} \ \tfrac{1}{2} \|  A \xv-\bv\|_2^2 + \lambda \sum_{g=1}^G \sqrt{\rho_g} \|\xv_g\|_2 
\end{align*} 
we have the following screening rule for all groups $g$:
\begin{align*}
\|A_g^\top (A\xv - \bv)\|_2 + \sqrt{2\G(\xv)}   \frobnorm{A_g} <\lambda \sqrt{\rho_g} \ \Rightarrow \ \xv_g^\star = 0 \ .
\end{align*}
\end{corollary}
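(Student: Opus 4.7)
The plan is to derive this corollary as a direct specialization of Theorem \ref{thm:smooth_grp_lasso}. Concretely, I would identify the squared loss regression objective with the general composite form $f(A\xv) + \sum_g \sqrt{\rho_g}\|\xv_g\|_2$ treated in that theorem, then verify the required smoothness and plug in.

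First I would set $f(\zv) := \tfrac{1}{2}\|\zv - \bv\|_2^2$, so that the objective in the corollary becomes $f(A\xv) + \lambda \sum_{g=1}^G \sqrt{\rho_g}\,\|\xv_g\|_2$. A quick check shows that $f$ is $L$-smooth with $L = 1$: its gradient $\nabla f(\zv) = \zv - \bv$ satisfies $\|\nabla f(\zv_1) - \nabla f(\zv_2)\|_2 = \|\zv_1 - \zv_2\|_2$. In particular, $\nabla f(A\xv) = A\xv - \bv$.

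Next I would reconcile the regularizer of the corollary with the group weights in Theorem \ref{thm:smooth_grp_lasso}. Since $\lambda \sum_g \sqrt{\rho_g}\|\xv_g\|_2 = \sum_g \sqrt{\lambda^2 \rho_g}\,\|\xv_g\|_2$, the theorem applies with each $\rho_g$ replaced by $\lambda^2 \rho_g$. Substituting $L=1$, $\nabla f(A\xv) = A\xv - \bv$, and $\sqrt{\lambda^2 \rho_g} = \lambda \sqrt{\rho_g}$ into the screening inequality of Theorem \ref{thm:smooth_grp_lasso} yields exactly
\[
\|A_g^\top (A\xv - \bv)\|_2 + \sqrt{2\G(\xv)}\,\frobnorm{A_g} < \lambda \sqrt{\rho_g} \ \Rightarrow\ \xv_g^\star = 0,
\]
which is the stated rule.

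There is no real obstacle here; the step that deserves a brief sentence is the verification that $L=1$ is indeed the smoothness constant of the squared loss, and a brief remark that the duality gap $\G(\xv)$ appearing in the theorem is the one associated with the Fenchel-Rockafellar pair induced by this particular $f$ and by $g(\xv) = \lambda \sum_g \sqrt{\rho_g}\|\xv_g\|_2$, so it is the same gap referenced in the corollary. Everything else is substitution.
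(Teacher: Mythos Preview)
Your proposal is correct and follows essentially the same route as the paper: identify $f(A\xv)=\tfrac12\|A\xv-\bv\|_2^2$, note $L=1$ and $\nabla f(A\xv)=A\xv-\bv$, and specialize Theorem~\ref{thm:smooth_grp_lasso}. Your explicit handling of the $\lambda$ factor by absorbing it into $\rho_g$ is a welcome clarification that the paper's own proof glosses over.
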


Group lasso regression is widely used in applications as an working example case of structured norm penalization. 
The framework of \cite{Ndiaye:2015wj} does not directly provide screening rules for the group lasso, due to the fact that they require $f$ to be partially separable over the groups as well as special structural requirement of the formulation, 
in contrast to our more general Theorem~\ref{thm:smooth_grp_lasso}. Similarly, \cite{Lee:2014wb} is also restricted to least-squares $f$ objective.

\vspace{-0.2cm}
\section{Illustrative Experiments} \label{sec:exp}
\vspace{-0.2cm}
While the contribution of our paper is on the theoretical generality and the collection of new screening applications, 
we will still briefly illustrate the performance of some of the proposed screening algorithms, for the classical examples of simplex constrained and $L_1$-constrained problems.
We compare the fraction of active variables and the Wolfe-Gap function as optimization algorithm progress. %

We consider the optimization problem of the form $\underset{\xv \in \ball_{L_1}}\min \|A\xv - \bv\|_2^2$. $\ball_{L_1}$ is a scaled $L_1$-ball with radius $35$. $A \in \mathbb{R}^{3000\times600}$  is a random Gaussian matrix and a noisy measurement
$\bv = A\xv^\star$ where $\xv^\star$ is a sparse vector of $+1$ and $-1$ with only $70$ non zeros entries. We solve the above optimization problem using the Frank-Wolfe algorithm (pair-wise variant, see \cite{LacosteJulien:2015wj}).
Before putting this optimization problem into the solver %
we convert this problem into the  barycentric representation which is $\underset{\xv_{\Simplex} \in \Simplex}\min \|A_{\Simplex}\xv_{\Simplex} - \bv\|_2^2$. The relation between the transformed variable and original variable can be given by $A_{\Simplex} = [A\,|-A]$ and $\xv = [I_n\,| - I_n]\xv_{\Simplex}$. For more details see \citep{Jaggi:2014co}. 
\begin{table}[H]
\centering
\begin{tabular}{ | c | c | c |}
\hline
\textbf{Dataset/ } & \textbf{No Screening}&\textbf{Screening} \\ 
\textbf{No. of Samples} & \textbf{(Simplex)}&\textbf{(Simplex)}\\ \hline
 \textit{Synth1} 5000& 13.1 sec &11.7sec\\ \hline
\textit{Synth2} 10000 & 28.3 sec & 23.1 sec\\ \hline
\textit{RCV1} 20242 & 18.6 min  &13.5 min \\ \hline
\textit{news20B} 19996 & 33.4 min  &25.2 min \\ \hline
\end{tabular} 
\caption{Simplex-constrained screening, clock time}
\label{tab:tab1}
\end{table}

\begin{figure}
\includegraphics[width=8.5cm, height = 5cm]{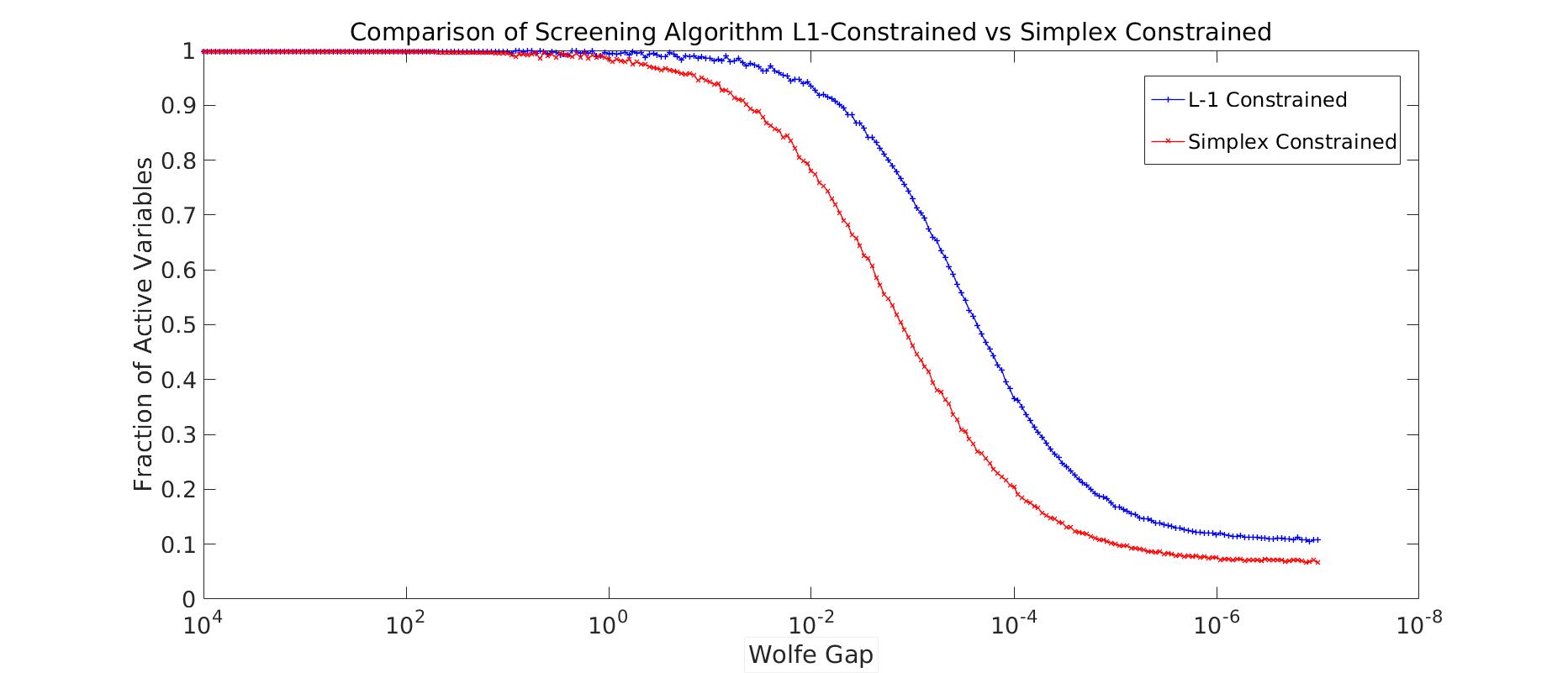} 
  \vspace{-25pt}
  \caption{Simplex- vs $L_1$-constrained Screening}
  \label{fig:fig1}
\end{figure}
\vspace{-0.7cm}
\begin{table}[H]
\centering
\begin{tabular}{ | c | c | c |}
\hline
\textbf{Dataset/ } & \textbf{No Screening}&\textbf{Screening} \\ 
\textbf{No. of Samples} & \textbf{($\ell_1$-constr.)}&\textbf{($\ell_1$-constr.)}\\ \hline
\textit{Synth1} 5000&13.1 &12.2 sec\\ \hline
\textit{Synth2} 10000 &  28.3 sec &24.7  sec\\ \hline
\textit{RCV1} 20242 & 18.6 min &14.9  min\\ \hline
\textit{news20B} 19996 & 33.4 min  &27.1 min \\ \hline
\end{tabular}
\caption{$L_1$-constrained screening, clock time}
\label{tab:tab2}
\end{table}
\vspace{-0.5cm}
Now we apply our Theorems \ref{thm:l1_smooth_constrained} and \ref{thm:simplex_smooth} on variable of $\xv$ and~$\xv_{\Simplex}$ respectively to screen, in order to compare the two alternative screening approaches on the same problem. Note that the Wolfe gap is identical in both parameterizations, for any~$\xv$.
One important point to note here is that dimension of $\xv_{\Simplex}$ is the double of the dimension of $\xv$, and any $L_1$-coordinate value $x_i$ is zero if and only if both ``duplicate'' variables $x_{\Simplex,i}$ and $x_{\Simplex,n+i}$ are zero, where $n$ is the dimensionality of~$\xv$. 

Therefore, the simplex variant (with more variables) performs a more fine-grained variant of screening, where we can screen each of the sign patterns separately for each variable. In Fig \ref{fig:fig1}, the blue curve illustrates the screening efficiency for the $L_1$-constrained screening case, while the red curve illustrate simplex constrained screening. 
 Our theorems \ref{thm:l1_smooth_constrained} and  \ref{thm:simplex_smooth} are well in line with the phenomena in Fig \ref{fig:fig1}. 
For the $L_1$-constrained case, the screening starts relatively at later stage than simplex case due to the fact that in Equation \eqref{eq:l1_constrained_screening}, two out of three terms are absolute values of some quantity and hence it is very tough to compensate both of them by the third quantity, in order for the entire sum to become negative. %
Hence in the beginning this rule can often be ineffective. 
As algorithm progresses, the duality gap becomes smaller and screening starts but at the same time the gradient (and therefore gap) also starts to decay which brings the trade-off shown in the plot. For both variants, screening becomes slow towards the end. 

We also report the time taken to reach a duality gap of $10^{-7}$ with both the approaches mentioned above (simplex constrained and $L_1$-constrained) on for different datasets. The first two datasets (\textit{Synth1} and \textit{Synth2}) are generated under the same setting described earlier but \textit{Synth1} with 5000 samples and \textit{Synth2} with 10000 samples. \textit{RCV1} is a real world dataset having $20,242$ samples and $47,236$ data dimensions. \textit{news20Binary} is also a real world dataset having $19,996$ entries and $ 1,355,191$ dimensions. Below in Tables \ref{tab:tab1} and \ref{tab:tab2}, we describe the running time of the optimization methods to reach a duality gap threshold of $10^{-7}$  with or without screening. On \textit{RCV1} dataset we try the feature learning with $L_1$-norm ball constraint of $200$ and on \textit{news20Binary} we use $L_1$-norm ball constraint of $35$. In the case of \textit{RCV1} and \textit{news20Binary}, $A$ is the data matrix and $\bv$ is the label of each instance in the dataset. From Tables~\ref{tab:tab1} and~\ref{tab:tab2} it is also evident that simplex screening rule is more tighter than the $L_1$-constrained screening rule.\\

\vspace{-0.8cm}
\section{Discussion}
\vspace{-0.3cm}
We have presented a unified way to derive screening rules for general constrained and penalized optimization problems. 
For both cases, our framework crucially utilizes the structure of piece-wise linearity of the problem at hand.
For the constrained case, we showed that screening rules follow from the piece-wise linearity of the boundary of the constraint set.
\\
The crucial property is that at non-differentiable boundary points, the normal cone -- i.e. the sub-differential of the indicator function of the constraint set -- becomes a relatively large set. Under moderate assumptions on the objective function, we are able to guarantee that also the gradient of an optimal point must lie in this same cone region, leading to screening.
\\
On the other hand for penalized optimization problems, we are able to derive screening rules from either piece-wise linearity of the penalty function, or as well from exploiting piece-wise linearity of the constraint set arising from the dual (conjugate) of the penalty function.

\paragraph{Acknowledgements.}
The authors would like to thank Julia Wysling, Rohit Babbar and Peter B\"uhlmann for valuable discussions.

\clearpage
{\small
\bibliographystyle{abbrvnat}
\bibliography{bibliography-jakob,bibliography-martin}
}

\clearpage
\onecolumn
\appendix

\section{Primal Dual Structure (Section \ref{sec:primaldual})}
\label{app:primaldual}

The relation of our primal and dual problems \eqref{eq:primal} and \eqref{eq:dual} is standard in convex analysis, and is a special case of the concept of Fenchel Duality. 
Using the combination with the linear map $A$ as in our case, the relationship is called \emph{Fenchel-Rockafellar Duality}, see e.g. \cite[Theorem 4.4.2]{Borwein:2005ge} or \cite[Proposition 15.18]{Bauschke:2011ik}.
For completeness, we here illustrate this correspondence with a self-contained derivation of the duality.
\begin{proof}

Starting with the original formulation \eqref{eq:primal}, we introduce a helper variable vector $\vv\in \R^d$ representing $\vv =A\alphav$. Then optimization problem~\eqref{eq:primal} becomes:
\begin{equation}
\label{eq:constrainedprimal}
\min_{\alphav \in\R^n} \quad  f(\vv) + g( \alphav) \quad \text{such that} \ \vv =A\alphav \, .
\end{equation}
Introducing Lagrange multipliers $\wv \in \R^d$,  the Lagrangian is given by:
$$ L(\alphav, \vv; \wv) := f(\vv) +   g(\alphav) + \wv^\top\left(A\alphav-\vv\right) \, .$$
The dual problem of \eqref{eq:primal} follows by taking the infimum with respect to both $\alphav$ and $\vv$:
\begin{align}
\inf_{\alphav, \vv} L(\wv, \alphav, \vv) & =   \inf_{\vv} \left\{ f(\vv) - \wv^\top \vv \right\} + \inf_{\alphav} \left\{ g(\alphav) +  \wv^\top A\alphav\right\} \notag \\
& =  - \sup_{\vv} \left\{  \wv^\top \vv - f(\vv) \right\}- \sup_{\alphav} \left\{(-\wv^\top A)\alphav -  g(\alphav) \right\} \label{eq:optimlity_proof}\\
& = - f^*(\wv) - g^*(-A^\top \wv) \label{eq:Lagrangian}\, .
\end{align}

We change signs and turn the  maximization of the dual problem \eqref{eq:Lagrangian} into a minimization and thus we arrive at the dual formulation $\eqref{eq:dual}$ as claimed:
\[
    \min_{\wv \in \R^d} \quad \Big[ \ 
    \calP(\wv) := f^*(\wv) + g^*(-A^\top \wv) \ \Big] \, .
\]
 
\paragraph{The Partially Separable Case.}
For $g(\xv)$ is separable, i.e. $g(\xv) = \sum_{i=1}^{n} g_i(x_i)$ for univariate functions $g_i:\R\rightarrow\R$ for $i\in[n]$, the primal-dual structure remains the separable.  
In this case, the conjugate of $g$ also separates as
$g^*(\yv) = \sum_i g_i^*(y_i)$. Therefore, in terms of the the primal-dual structure \eqref{eq:primal} and \eqref{eq:dual} we obtain the separable special case \eqref{eq:primalS} and \eqref{eq:dualS}. 
\end{proof}

\paragraph{Optimality Conditions.}
The first-order optimality conditions follow from the standard definition of the conjugate functions in the Fenchel dual problem, see also e.g. \cite{Borwein:2005ge,Bauschke:2011ik}.
\begin{proof}
The first-order optimality conditions for our pair of vectors $\wv\in \R^d, \xv \in \R^n$ in problems~\eqref{eq:primal} and \eqref{eq:dual} are given by equations \eqref{eq:opt_f}, \eqref{eq:opt_g}, \eqref{eq:opt_fstar} and \eqref{eq:opt_gstar}.
The proof directly comes from equation \eqref{eq:optimlity_proof} by separately writing optimizing conditions for two expressions $\wv^\top \vv - f(\vv)$ and $(-\wv^\top A)\alphav -  g(\alphav)$ in equation \eqref{eq:optimlity_proof}.

Crucially in the partially separable case, the optimality conditions \eqref{eq:opt_g} and~\eqref{eq:opt_gstar} become separable.
Comparing the expressions \eqref{eq:primalS} and \eqref{eq:primal}, we see that 
$g(\xv) = \sum_i g_i(x_i)$ and hence 
\[
g^*(\xv) = \sum_i g_i^*(x_i)
\]
Hence by applying \eqref{eq:opt_g} and \eqref{eq:opt_gstar} we obtain the separable optimality conditions \eqref{eq:opt_gi} and \eqref{eq:opt_gistar}.
\end{proof}

\section{Duality Gap and Objective Function Properties} \label{app:duality_gap_certificates}

\subsection{Wolfe Gap as a Special Case of Duality Gap} \label{app:eq_fw_gd}
\begin{proof}
To see this as a special case of general duality gap of the problem formulation, we consider the constraint as indicator function of set $\domain$ such that $g(\xv) = \id_{\domain}(\xv)$. Now from the definition of the Wolfe gap function 
$$\GW(\xv) := \underset{\yv \in \domain}{\max}\; (A\xv-A\yv)^\top\partial f(A\xv)$$
Here $\partial f(A\xv)$ is an arbitrary subgradient of $f$ at the candidate position $\xv$, and $\id_{\domain}^*(\yv) := \sup_{\sv \in \domain}\langle \sv,\yv \rangle$ is the support function of $\domain$.
Now writing the general duality gap $\G(\xv)$ as 
\begin{align*}
\G(\xv) &:= \calD(\xv) + \calP(\wv(\xv))  \\
&:=  f(A\xv) + \id_{\domain}(\xv) + f^*(\wv(\xv) ) + \id_{\domain}^*(-(A^\top\wv(\xv)))
\end{align*}
the last term disappears since we assumed $\xv\in\domain$. Using the definition of the Fenchel conjugate, one has the Fenchel-Young inequality, i.e. $$f^*(\wv) := \max_{\uv\in\R^d} \wv^\top \uv - f(\uv) \ \Rightarrow \ f^*(\wv) +  f(\uv) \ge \wv^\top \uv$$
The above holds with equality if $\wv$ is chosen as a subgradient of $f$ at $\uv=A\xv$.
Therefore, using our first-order optimality mapping $\wv(\xv) := \partial f(A\xv)$, 
we have $$\G(\xv) = (A\xv)^\top \partial f(A\xv) + \id_{\domain}^*(-(A^\top\wv(\xv))) = \GW(\xv)$$
This derivation is adapted from \cite[Appendix D]{LacosteJulien:2013ue}.
\end{proof}

\subsection{Obtaining Information about the Optimal Points} \label{app:optimal_point}

\begin{lemma}[Conjugates of Indicator Functions and Norms]~\vspace{-1mm}
\label{lem:conjugates}
\begin{enumerate}
\item[i)] The conjugate of the indicator function $\id_{\setC}$ of a set $\setC\subset \R^n$ (not necessarily convex) is the support function of the set $\setC$, that is $\quad \id_{\setC}^*(\xv) = \sup_{\sv\in\setC} \langle \sv,\xv\rangle$
\vspace{-1mm}
\item[ii)] The conjugate of a norm is the indicator function of the unit ball of the dual norm.
\end{enumerate}
\end{lemma}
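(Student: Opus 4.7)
My plan is to prove each part directly from the definition of the Fenchel conjugate, since both identities are essentially unpacking definitions.

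For part (i), I would start from $\id_{\setC}^*(\xv) = \sup_{\uv\in\R^n} \langle \uv, \xv\rangle - \id_{\setC}(\uv)$. Since $\id_{\setC}(\uv) = +\infty$ whenever $\uv \notin \setC$, any such $\uv$ contributes $-\infty$ to the supremum and can be discarded. Restricting the supremum to $\uv \in \setC$, where $\id_{\setC}(\uv) = 0$, yields $\sup_{\uv \in \setC} \langle \uv, \xv \rangle$, which is exactly the support function. Note that convexity of $\setC$ plays no role; $\id_{\setC}^*$ is automatically convex as a supremum of linear functions even if $\setC$ is not.

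For part (ii), let $\norm{\cdot}$ be any norm on $\R^n$ with dual norm $\norm{\vv}_* := \sup_{\norm{\uv}\le 1}\langle \uv,\vv\rangle$, and let $\ball_* := \{\vv : \norm{\vv}_*\le 1\}$ be the dual unit ball. By definition, $\norm{\cdot}^*(\vv) = \sup_{\uv\in\R^n} \langle \uv,\vv\rangle - \norm{\uv}$. I will split into two cases. If $\vv \in \ball_*$, then by the generalized Cauchy–Schwarz inequality $\langle \uv,\vv\rangle \le \norm{\uv}\norm{\vv}_* \le \norm{\uv}$, so the objective is $\le 0$ for every $\uv$, with equality at $\uv=\0$; hence the supremum equals $0$. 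If instead $\norm{\vv}_* > 1$, then by definition of $\norm{\cdot}_*$ there exists $\uv_0$ with $\norm{\uv_0}\le 1$ and $\langle \uv_0,\vv\rangle > 1 \ge \norm{\uv_0}$; substituting $t\uv_0$ for $t>0$ gives $\langle t\uv_0,\vv\rangle - \norm{t\uv_0} = t(\langle \uv_0,\vv\rangle - \norm{\uv_0}) \to +\infty$ as $t\to\infty$. Combining the two cases shows $\norm{\cdot}^*(\vv) = \id_{\ball_*}(\vv)$.

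Neither step is technically difficult; both follow from carefully unpacking the definitions of the conjugate, the support function, and the dual norm. The only subtle point worth being explicit about is the positive-homogeneity argument in the $\norm{\vv}_*>1$ case of part (ii), which is what forces the supremum to blow up rather than merely being positive. Since both statements are entirely classical (see, e.g., the references to \cite{Borwein:2005ge,Bauschke:2011ik} already cited in the paper), I expect no genuine obstacle and the proof to be only a few lines per part.
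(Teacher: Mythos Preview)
Your proof is correct. Both parts are handled cleanly: for (i) you correctly observe that points outside $\setC$ contribute $-\infty$ and drop out of the supremum; for (ii) the two-case split together with the positive-homogeneity scaling argument is exactly what is needed.

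The paper itself does not actually prove this lemma; its entire ``proof'' is a reference to \cite[Examples~3.24 and~3.26]{Boyd:2004uz}. Your argument is precisely the standard one found there (and in \cite{Borwein:2005ge,Bauschke:2011ik}), so in substance you agree with the cited source, but you have made the derivation self-contained rather than deferring to an external reference. That buys readability at essentially no cost, since each part is only a few lines.
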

\begin{proof}
\cite[Example 3.24 and 3.26]{Boyd:2004uz}
\end{proof}

\begin{lemma} \label{lem:smooth_convex_condition}
Assume  that $f$ is  a closed  and  convex  function then  $f^*$ is $\mu$-strongly convex with respect to a norm $\norm{\cdot}$ if and only if $f$ is $1/\mu$-Lipschitz gradient with respect to dual norm $\norm{\cdot}_*$.
\end{lemma}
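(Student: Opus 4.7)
The plan is to prove each implication of the biconditional using the Fenchel--Young inequality (with its equality case) together with the biconjugate identity $(f^*)^* = f$, which holds since $f$ is closed and convex.

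For the forward direction ($f$ is $(1/\mu)$-smooth w.r.t.\ $\|\cdot\|_*$ implies $f^*$ is $\mu$-strongly convex w.r.t.\ $\|\cdot\|$), I would first invoke the Baillon--Haddad cocoercivity property: an $L$-smooth convex $f$ (with $L = 1/\mu$) satisfies
\[
\langle \nabla f(\xv) - \nabla f(\yv),\, \xv - \yv \rangle \;\ge\; \tfrac{1}{L}\|\nabla f(\xv) - \nabla f(\yv)\|^2 .
\]
Then for any $\vv_1, \vv_2 \in \dom(f^*)$ I pick $\xv_i \in \partial f^*(\vv_i)$, which by the equality case of Fenchel--Young is equivalent to $\vv_i \in \partial f(\xv_i)$; smoothness of $f$ collapses this to $\vv_i = \nabla f(\xv_i)$. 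Writing out the Fenchel--Young equalities $f^*(\vv_i) = \langle \vv_i, \xv_i \rangle - f(\xv_i)$ and computing $f^*(\vv_1) - f^*(\vv_2) - \langle \xv_2, \vv_1 - \vv_2 \rangle$ yields an expression involving only the gradients at the $\xv_i$ and the difference $f(\xv_1) - f(\xv_2)$; cocoercivity then lower bounds it by $\tfrac{\mu}{2}\|\vv_1 - \vv_2\|^2$, which is exactly $\mu$-strong convexity of $f^*$.

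For the reverse direction, the cleanest path is to invoke the biconjugate theorem and apply the forward direction with the roles of $f$ and $f^*$ interchanged (the norms swap automatically). Alternatively, one can argue directly: evaluating strong convexity of $f^*$ at $\vv_1, \vv_2$ with subgradients $\xv_i \in \partial f^*(\vv_i)$ and combining with Fenchel--Young equality produces the inequality $\|\vv_1 - \vv_2\| \le (1/\mu)\|\xv_1 - \xv_2\|_*$ for every pair of such subgradients. Since the right-hand side depends only on the $\xv_i$, this forces $\partial f$ to be single-valued (hence $f$ to be differentiable on $\dom f$), and the inequality is precisely the $(1/\mu)$-Lipschitz property of $\nabla f$ with respect to the dual norm.

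The main obstacle I anticipate is the dual-norm bookkeeping: $f$ lives on the primal side equipped with $\|\cdot\|_*$ while $f^*$ lives on the dual side equipped with $\|\cdot\|$, so Baillon--Haddad and Fenchel--Young must be applied with matched pairings so that the constants $\mu$ and $1/\mu$ align correctly. A secondary subtlety in the reverse direction is justifying everywhere-differentiability of $f$, which follows from the uniqueness of subgradients delivered by strong convexity of $f^*$ through the correspondence $\xv \in \partial f^*(\vv) \iff \vv \in \partial f(\xv)$.
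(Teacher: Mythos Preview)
The paper does not actually prove this lemma: its entire proof is a one-line citation to \cite[Theorem 3]{Kakade:2009wh}. So your proposal is not competing against an argument in the paper but rather supplying one where the paper defers to the literature. Your forward direction via the cocoercivity/descent-lemma lower bound is a legitimate route and, with the dual-norm bookkeeping you already flag (the inequality $f(\yv)\ge f(\xv)+\langle\nabla f(\xv),\yv-\xv\rangle+\tfrac{1}{2L}\|\nabla f(\yv)-\nabla f(\xv)\|_*^2$ does hold for general norms once $L$-smoothness is taken as the quadratic upper bound), it goes through. The direct argument you give for the reverse direction---adding the two strong-convexity inequalities to get $\|\vv_1-\vv_2\|\le(1/\mu)\|\xv_1-\xv_2\|_*$ and then reading this as single-valuedness plus Lipschitzness of $\partial f$---is also correct.

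One caveat: your ``cleanest path'' for the reverse direction via biconjugation does not work as stated. Having proved ``$h$ smooth $\Rightarrow$ $h^*$ strongly convex'' and then substituting $h=f^*$ yields ``$f^*$ smooth $\Rightarrow$ $f$ strongly convex,'' which is a different (dual) statement, not the reverse implication you need. The biconjugate identity lets you swap which function carries which property, but it does not turn one direction of an equivalence into the other. Since your alternative direct argument is sound, this does not break the proof---just drop the biconjugate remark or rephrase it as deriving the companion duality ``$f$ strongly convex $\Leftrightarrow$ $f^*$ smooth'' once both directions of the present lemma are in hand.
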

\begin{proof}
\cite[Theorem 3]{Kakade:2009wh}
\end{proof}

\begin{proof}[Proof of Lemma \ref{lem:stronglyconvex}]
From the definition of $\mu$-strongly convex function, we know that
\begin{align*}
 f^*(\wv) &\ge f^*(\wv^\star) + (\wv-\wv^\star)^\top\nabla f^*(\wv^\star) + \frac{\mu}{2} \|\wv - \wv^\star\|_2^2\\
 &\ge f^*(\wv^\star) + \frac{\mu}{2} \|\wv - \wv^\star\|_2^2 
\end{align*}
The first inequality follows directly by using the first order optimality condition for $\wv^\star$ being optimal. For any optimal point $\wv^\star$ and another feasible point $\wv$, $$ (\wv-\wv^\star)^\top\nabla f^*(\wv^\star) \ge 0.$$ \\
Hence,$ \qquad\norm{\wv^\star - \wv}_2^2 \le \frac{2}{\mu}(f^*(\wv)-f^*(\wv^\star))$
\end{proof}

\begin{proof}[Proof of Corollary \ref{cor:Dgaprestriction}]
This statement directly comes from \eqref{lem:stronglyconvex} and the definition of the duality gap. By definition we know that the true optimum values $\calD(\xv^*)$ and $-\calP(\wv^*)$ respectively for primal \eqref{eq:primal} and dual formulation \eqref{eq:dual}  will always lie within the duality gap which implies $$ \G(\xv)  \ge \calP(\wv) - \calP(\wv^\star) $$ 
By equation \eqref{eq:dual}, we know that $ \ \qquad \calP(\wv) = f^*(\wv) + g^*(-A^\top\wv^\star)$ \\
Now since $f^*$ is $\mu$-strongly convex function and $g^*$ is convex hence,
\begin{align}
f^*(\wv) \ge f^*(\wv^\star) + \nabla f^*(\wv^\star)^\top(\wv - \wv^\star) + \frac{\mu}{2}\norm{\wv - \wv^\star}_2^2 \label{eq:f_star_strng_convex} \\
g^*(-A^\top \wv)\ge g^*(-A^\top\wv^\star) + \nabla g^*(-A^\top \wv^\star)^\top(-A^\top\wv + A^\top\wv^\star)  \label{eq:g_star_strng_convex}  
\end{align}

Hence by adding equation \eqref{eq:f_star_strng_convex} and \eqref{eq:g_star_strng_convex}, we get
\[
\calP(\wv) \ge \calP(\wv^\star) + (\nabla f^*(\wv^\star) -A \nabla g^*(-A^\top\wv^\star)) ^\top(\wv - \wv^\star)+ \frac{\mu}{2}\norm{\wv - \wv^\star}_2^2
\]
\[
\Rightarrow \calP(\wv) \ge \calP(\wv^\star) + \nabla \calP(\wv^\star)^\top(\wv - \wv^\star)+ \frac{\mu}{2}\norm{\wv - \wv^\star}_2^2
\]
At optimal point $\wv^\star$, $\nabla  \calP(\wv^\star)^\top(\wv - \wv^\star) \ge 0$. \\
Hence,
\[
\G(\xv)  \ge \calP(\wv) - \calP(\wv^\star) \ge \frac{\mu}{2}\norm{\wv - \wv^\star}_2^2
\]
\end{proof}

\begin{proof}[Proof of Lemma \ref{lem:FWgaprestriction}]
From the definition of $\mu$-strong convexity of $f$ and using optimality condition,
\begin{eqnarray}
\mu\norm{A\xv-A\xv^\star}^2 & \le & (A\xv-A\xv^\star)^\top(\nabla f(A\xv)-\nabla f(A\xv^\star)) \label{eq:definition_strng_convex}\\
& \le & (A\xv-A\xv^\star)^\top\nabla f(A\xv) \label{eq:optimality_cond_f}\\
& \le & \GW(\xv)\label{eq:definition_wolfe_gap}
\end{eqnarray}
Equation \eqref{eq:definition_strng_convex} comes from the definition of $\mu$-strong convexity.\\
Equation \eqref{eq:optimality_cond_f} is first order optimality condition for $\xv^\star$ being optimal which implies $$(A\xv-A\xv^\star)^\top\nabla f(A\xv^\star) \ge 0$$
The inequality \eqref{eq:definition_wolfe_gap} follows by the definition of the gap function given in \eqref{eq:wolfe_gap}.
\end{proof}

\begin{proof}[Proof of Corollary \ref{cor:restric:smooth}]
This comes by definition of $L$-smooth functions and Lemma \ref{lem:FWgaprestriction}.
From the definition, 
\begin{align*}
\norm{\nabla f(A\xv)-\nabla f(A\xv^\star)} &\le L \norm{A\xv-A\xv^\star} \\
& \le  \frac{L}{\sqrt{\mu}}\sqrt{\GW(\xv)}
\end{align*}
Second inequality directly comes from Lemma \ref{lem:FWgaprestriction}.
\end{proof}

\section{Screening on Constrained Problems}
\label{app:constrained}

\begin{lemma} \label{lem:subgrad_ind}
Let $\setC$ be a convex set, and $\id_\setC$ be its indicator function, then
\begin{enumerate}
\item For $\xv \notin \setC$, $\partial \id_\setC(\xv) = \emptyset$
\item For $\xv \in \setC$, we have that $\wv \in  \partial \id_\setC(\xv)$ if $ \wv^\top(\zv-\xv) \le 0 \quad \forall \zv \in \setC$
\end{enumerate}
\end{lemma}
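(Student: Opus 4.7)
The plan is to unfold the definition of the subdifferential for the convex function $\id_\setC$ and check the two cases separately. Recall that for a proper convex function $h$, the subdifferential at $\xv$ is defined as
\[
\partial h(\xv) := \{\wv : h(\zv) \ge h(\xv) + \wv^\top(\zv - \xv) \text{ for all } \zv\},
\]
and conventionally only points with $h(\xv) < +\infty$ can have nonempty subdifferential. This definition is the only ingredient I will need.

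For Part 1, suppose $\xv \notin \setC$, so $\id_\setC(\xv) = +\infty$. Then the subgradient inequality would force $\id_\setC(\zv) \ge +\infty + \wv^\top(\zv - \xv)$ for every $\zv$, which fails as soon as we pick any $\zv \in \setC$ where $\id_\setC(\zv) = 0$. Hence $\partial \id_\setC(\xv) = \emptyset$. (Equivalently, one simply invokes the convention that the subdifferential is empty outside $\dom h$.)

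For Part 2, suppose $\xv \in \setC$, so $\id_\setC(\xv) = 0$. The subgradient inequality reduces to
\[
\id_\setC(\zv) \ge \wv^\top(\zv - \xv) \quad \text{for all } \zv \in \R^n.
\]
For $\zv \notin \setC$ the left-hand side is $+\infty$ and the inequality is automatic. For $\zv \in \setC$ the left-hand side vanishes, leaving exactly $\wv^\top(\zv - \xv) \le 0$. Thus $\wv \in \partial \id_\setC(\xv)$ if and only if this inequality holds for every $\zv \in \setC$, which is the stated condition.

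There is no real obstacle here; the proof is a direct application of the subdifferential definition. The only minor subtlety is bookkeeping the two regimes ($\zv$ inside vs.\ outside $\setC$) in Part 2, and noting that convexity of $\setC$ is not needed for the characterization itself (only for $\id_\setC$ to be convex, so that the subdifferential is the standard convex-analytic object discussed throughout the paper).
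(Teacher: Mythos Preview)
Your proof is correct and follows essentially the same approach as the paper: unfold the definition of the subdifferential of $\id_\setC$ and distinguish cases according to whether $\xv$ and $\zv$ lie in $\setC$. The paper additionally splits the $\xv\in\setC$ case into interior versus boundary points (observing that $\partial\id_\setC(\xv)=\{0\}$ at interior points), but this refinement is not needed for the lemma as stated, and your more compact treatment is sufficient.
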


\begin{proof}
Let $\domain \subseteq \R^n$ be a closed convex set. Then subgradient of indicator function $\id_{\domain}(\xv)$ at $\xv$ will be vectors $\uv$ which satisfy
\begin{align}
&\id_{\domain}(\zv) \ge \id_{\domain}(\xv) + \uv^\top(\zv - \xv) \quad \forall \zv \in \dom(\id_{\domain}) \notag \\
&\Rightarrow \id_{\domain}(\zv) \ge \id_{\domain}(\xv) + \uv^\top(\zv - \xv) \quad \forall \zv \in \R^n \label{app:eq:sub_diff}
\end{align}
If $\intr(\domain)$ represents the interior of the set $\domain$ such that it contains $n$-dimensional ball of radius $r >0$, and $\mathop{Bd}(\domain)$ represents boundary of the set $\domain$. Now we have to assume various cases for proving Lemma~\ref{lem:subgrad_ind}.
\begin{enumerate}
\item[Case 1] We evaluate Equation \eqref{app:eq:sub_diff} when $\xv \in \intr(\domain)$. Equation \eqref{app:eq:sub_diff} becomes $$\id_{\domain}(\zv)  \ge \uv^\top(\zv - \xv) \quad \forall \zv \in \R^n  $$
Now since the above equation is satisfied for all $\zv \in \R^n$ , we assume  $\zv \in \intr(\domain)$ such that $(\zv - \xv)$ can be anywhere in the ball. Hence $\uv$ needs to be $0$ in this case.
\item[Case 2] In this case we assume $\xv \in \mathop{Bd}(\domain)$. That gives $$\id_{\domain}(\zv)  \ge \uv^\top(\zv - \xv) \quad \forall \zv \in \R^n  $$ 
If we take $\zv \in \domain$ then $\uv$ satisfies $\uv^\top(\zv-\xv) \le 0 \quad \forall \zv \in \setC$ \\
If $\zv \not\in  \domain$ then $\uv$ can take all the value. Hence taking intersection, $\uv$ satisfies $$\uv^\top(\zv-\xv) \le 0 \quad \forall \zv \in \setC$$ 
\item[Case 3] When we assume $\xv \not \in \domain$, we get $$\id_{\domain}(\zv) \ge +\infty + \uv^\top(\zv - \xv) \quad \forall \zv \in \R^n $$ 
 If we again take $\zv \in \domain$ then no finite $\uv$ can satisfy the equation $\id_{\domain}(\zv) \ge +\infty + \uv^\top(\zv - \xv) \quad \forall \zv \in \domain $  because $\id_{\domain}(\zv) = 0 \ \textrm{ if} \ \ \zv \in \domain$. \\
 And if $ \zv \not \in \domain \ \Rightarrow\  \id_{\domain}(\zv) = +\infty $ then again nothing can be said about the vector $\uv$. Hence by convention it is assumed that $\xv \not \in \domain \ \Rightarrow\  \uv \in \emptyset $  
\end{enumerate}
By the above arguments we conclude that,
\begin{enumerate}
\item For $\xv \notin \setC$, $\partial \id_\setC(\xv) = \emptyset$
\item For $\xv \in \setC$, we have that $\wv \in  \partial \id_\setC(\xv)$ if $ \wv^\top(\zv-\xv) \le 0 \quad \forall \zv \in \setC$
\end{enumerate}

Hence the claim made in Lemma \ref{lem:subgrad_ind} is proved.
\end{proof}

\begin{proof}[\textbf{Proof of Lemma \ref{lem:general_constrained_lemma}}]
 \ From Lemma \ref{lem:subgrad_ind}, we know the expression for subgradient of the indication function $\id_{\setC}$
\begin{align}
 \partial g(\xv^\star)  =& \left\{ \sv  \ |\   \forall \zv \in \setC \; \ \sv^\top(\zv - \xv^\star) \le 0  \quad    \right\} \notag \\
  =& \left\{ \sv  \ |\   \forall \zv \in \setC  \; \ \sv^\top \zv \le \sv^\top \xv^\star   \quad     \right\}  \label{eq:subgrad_simplex}
\end{align}
Now, by the optimality condition \eqref{eq:opt_g}, $-A^\top \wv^\star \in \ \partial g(\xv^\star)$ and since this holds, hence $-A^\top \wv^\star$ should satisfy the required constrained which is needed to be in the set of subgradients of $\partial g(\xv^\star)$ according to conditions in equation \eqref{eq:subgrad_simplex}. Hence,
\begin{align}
 &\ (-A^\top \wv^\star)^\top \zv \le (-A^\top \wv^\star)^\top \xv^\star   \quad  \forall \zv \in \setC     \\
\Rightarrow & \ (A^\top \wv^\star)^\top \xv^\star  \le (A^\top \wv^\star)^\top \zv  \quad     \forall \zv \setC   \\
\Rightarrow  & \  (A^\top \wv^\star)^\top \xv^\star  \le \underset{z}{\min}(A^\top \wv^\star)^\top \zv  \quad     s.t \ \ \ \zv \in \setC  \\
\Rightarrow & \ (A \xv^\star )^\top \wv^\star  \leq  \underset{\zv \in \setC} {\min}(A\zv )^\top \wv^\star \quad     s.t \ \ \ \zv \in \setC 
\end{align}
Since $\xv^\star$ is a feasible point hence $(A \xv^\star )^\top \wv^\star  = \underset{\zv \in \setC} {\min}(A\zv )^\top \wv^\star \quad     s.t \ \ \ \xv^\star,\zv \in \setC$. 
\end{proof}

\subsection{Screening on Simplex Constrained Problems (Section \ref{subsec:simplex_screening})}
\label{app:simplex_screening}

\paragraph{General Simplex Constrained Screening}
\begin{proof}[\textbf{Proof of Theorem \ref{thm:simplexcondition}}]
In the simplex case, we have $g(\xv)=\id_\Simplex(\xv)$
and by Lemma \ref{lem:subgrad_ind} 
\begin{align}
 \partial g(\xv^\star)  =& \left\{ \sv  \ |\   \forall \zv \in \Simplex \; \ \sv^\top(\zv - \xv^\star) \le 0  \quad    \right\} \notag \\
  =& \left\{ \sv  \ |\   \forall \zv \in \Simplex \; \ \sv^\top \zv \le \sv^\top \xv^\star   \quad     \right\}  \label{eq:subgrad_simplex}
\end{align}
Now, by the optimality condition \eqref{eq:opt_g}, $-A^\top \wv^\star \in \ \partial g(\xv^\star)$ and since this holds, hence $-A^\top \wv^\star$ should satisfy the required constrained which is needed to be in the set of subgradients of $\partial g(\xv^\star)$ according to conditions in equation \eqref{eq:subgrad_simplex}. Hence,
\begin{align}
 &\ (-A^\top \wv^\star)^\top \zv \le (-A^\top \wv^\star)^\top \xv^\star   \quad  \forall \zv \in \Simplex     \\
\Rightarrow & \ (A^\top \wv^\star)^\top \xv^\star  \le (A^\top \wv^\star)^\top \zv  \quad     \forall \zv \in \Simplex  \\
\Rightarrow  & \  (A^\top \wv^\star)^\top \xv^\star  \le \underset{z}{\min}(A^\top \wv^\star)^\top \zv  \quad     s.t \ \ \ \zv \in \Simplex \ \\
\Rightarrow & \ (A^\top \wv^\star)^\top \xv^\star  \le \underset{i}{\min} \ \av_i^\top \wv^\star \ \label{min_explain_simplex}  \\
\Rightarrow & \ (A\xv^\star)^\top\wv^\star  \le \underset{i}{\min} \ \av_i^\top \wv^\star \  \\
\Rightarrow & \ (A\xv^\star)^\top\wv^\star  = \underset{i}{\min} \ \av_i^\top \wv^\star \  \label{last_line_simplex}
\end{align}

Equation \eqref{min_explain_simplex} is due to the fact that $\zv$ lie in the simplex, hence minimum value of $(A^\top \wv^\star)^\top \zv$ is $\underset{i}{\min} \ \av_i^\top \wv^\star$ and equation \eqref{last_line_simplex} also comes from the same fact that $\xv^\star$ lie in the simplex and hence $(A\xv^\star)^\top\wv^\star$ can not be smaller than $\underset{i}{\min} \ \av_i^\top \wv^\star$. That implies these two quantities need to be equal and all the $i$'s where this equality doesn't hold refers to $x_i^\star = 0$ for all such  $i$'s.
\begin{align*}
\ \av_i^\top \wv^\star > \ (A^\top \wv^\star)^\top \xv^\star  \ \Rightarrow\  x_i = 0 \\
\ (\av_i - A\xv^\star)^\top \wv^\star > 0 \ \Rightarrow\  x_i = 0
\end{align*}
\end{proof}

\begin{proof}[\textbf{Proof of Theorem \ref{thm:simplex_smooth}}]
From the optimality condition \eqref{eq:opt_f}, we have $\wv^\star = \nabla f(A\xv^\star)$ since $f$ is differentiable. Hence,
\begin{align}
(\av_i - A\xv^\star)^\top \wv^\star  &= (\av_i - A\xv^\star)^\top\nabla f(A\xv^\star) \qquad \qquad \qquad \qquad \qquad \qquad\\
&= (\av_i - A\xv^\star +  A\xv -  A\xv)^\top\nabla f(A\xv^\star) \\
&= (\av_i -  A\xv)^\top\nabla f(A\xv^\star) + (A\xv -A\xv^\star )^\top\nabla f(A\xv^\star) \\ 
&\ge (\av_i -  A\xv)^\top\nabla f(A\xv^\star) \quad \{\textrm{From the optimality of } f(A\xv)\} \label{eq:optmilatity_Ax} \\
&=(\av_i -  A\xv)^\top\nabla f(A\xv) - (\av_i -  A\xv)^\top(\nabla f(A\xv) - \nabla f(A\xv^\star))\\
&\ge (\av_i -  A\xv)^\top\nabla f(A\xv) - \norm{\av_i -  A\xv} \norm{\nabla f(A\xv) - \nabla f(A\xv^\star)}\\
&\ge (\av_i -  A\xv)^\top\nabla f(A\xv) - L\sqrt{\frac{\GW(\xv)}{\mu}}\norm{\av_i-A\xv} \label{lastline}
\end{align}
Eq. \eqref{eq:optmilatity_Ax} comes from the fact that at the optimal point $\xv^\star$, the inequality $(A\xv -A\xv^\star )^\top\nabla f(A\xv^\star) \ge 0$ holds $\forall \ \xv$. 
Equation \eqref{lastline} comes from Corollary \ref{cor:restric:smooth} for smooth function $f$ over a constrained set $\setC$.
\\
Hence from Theorem \ref{thm:simplexcondition}, we obtain the screening rule
$$(\av_i -A\xv)^\top\nabla f(A\xv) > L\sqrt{\frac{\GW(\xv)}{\mu}}\norm{\av_i-A\xv} \ \Rightarrow\  x_i^\star=0$$
\end{proof}

\paragraph{Screening for Squared Hinge Loss SVM.}
\begin{proof}[\textbf{Proof of Corollary \ref{cor:square_svm}}]
Theorem \ref{thm:simplex_smooth} is directly applicable to problems of the form \eqref{eq:sq:loss:svm}. The objective function $f(\yv) = f(A\xv)= \frac{1}{2}\xv^\top A^\top A\xv$ is strongly convex with parameter $\mu=1$. Also the derivative $\nabla f$ is Lipschitz-continuous with parameter $L=1$. 
To obtain an upper bound on the distance between any approximate solution and  the optimal solution $\|A\xv - A\xv^\star\|$, we employ Lemma \ref{lem:FWgaprestriction}. Since the constrained of the optimization problem is unit simplex and hence the value of Wolfe gap function $\GW(\xv) := \max_{\yv \in \domain}\; (A\xv-A\yv)^\top\nabla f(A\xv)$ as defined in Section \ref{sec:duality_gap_certificates} will be attained on one of the vertices. So, $\GW(\xv)=\max_{i \in 1\dots m}\; (A\xv-\av_i)^\top A\xv$. Finally, Theorem \ref{thm:simplex_smooth} gives us the screening rule for squared hinge loss SVM:
\begin{equation}
(\av_i-A\xv)^\top A\xv > \sqrt{\max_{i \in 1\dots m}\; (A\xv-\av_i)^\top A\xv}\norm{\av_i-A\xv} \ \Rightarrow\  x_i^\star = 0 
\end{equation}
\end{proof}

\paragraph{Screening on Minimum Enclosing Ball.}
\textbf{\\Minimum Enclosing Ball -} Given a set of $n$ points, $\av_1$ to $\av_n$ in $\R^d$, the minimum enclosing ball is defined as the smallest ball $B_{\cv,r}$ with center $\cv$ and radius $r$, i.e.: $B_{\cv,r} := \{\xv \in \R^d \ |\  \norm{\cv-\xv} \le r\}$, such that all points $\av_i$ lie in its interior. 
In this set-up, screening means to identify points $\av_i$ lying in the interior of the optimal ball $B_{\cv^\star,r^\star}$. Removing those points from the problem does not change the optimal ball.

\begin{proof}[\textbf{Proof of Corollary \ref{cor:min_enclosing_ball}}]
The minimum enclosing ball problem can be formulated as an optimization problem of the form given in Equation \eqref{eq:min_enclosing_ball}:
\begin{align*} 
& \underset{\cv,r}{\min} \ \ \ r^2  \quad \text{s.t.} \norm{\cv-\av_i}^2_2\le r^2 \quad \forall i \in [n]
\end{align*}
As we have seen, the dual formulation can be written in the form of Equation \eqref{eq:dualball} as given in \cite[Chapter 8.7]{Matousek:2007ub}:
\begin{align*}
 \underset{\xv}{\min} \ \ \xv^\top A^\top A\xv - \sum_{j=1}^{p} \av_j^\top\av_jx_j \quad \text{s.t. }  \xv \in \Simplex  
\end{align*}
Now the function $\xv^\top A^\top A\xv - \sum_{j=1}^{p} \av_j^\top\av_jx_j$ is strongly convex in $A\xv$ with parameter $\mu=2$. Since the constrained of the optimization problem is unit simplex and hence the value of the Wolfe gap function $\GW(\xv) := \underset{\yv \in \domain}{\max}\; (A\xv-A\yv)^\top\nabla f(A\xv)$ as defined in Section \ref{sec:duality_gap_certificates}  will be attained at one of the vertices of unit simplex. Hence Corollary \ref{cor:restric:smooth} gives $\GW(\xv)= \sqrt{\frac{1}{2}\max_{i}(\xv-\ev_i)^\top (2A^\top A\xv+\cv')}$. Now applying the findings of Theorem \ref{thm:simplex_smooth}, we get a sufficient condition for $\av_i$ to be non-influential, i.e. $\av_i$ lies in the interior of the MEB. But before that we will simplify the left hand side of the theorem \ref{thm:simplex_smooth} a bit. $(\av_i - A\xv)^\top \nabla f(A\xv)$ can we written as $(\bm{e}_i - \xv)^\top A^\top \nabla f(A\xv)$. Hence we get our result claimed in Corollary \ref{cor:min_enclosing_ball}.
\begin{equation}
\label{ballcor}
(\ev_i-\xv)^\top (2A^\top A\xv+\cv') > 2\sqrt{\tfrac{1}{2}\underset{j}{\text{max}}(\xv-\ev_i)^\top (2A^\top A\xv+\cv')}\norm{\av_i-A\xv} \ \Rightarrow\  x_i^\star = 0
\end{equation}
That means $\av_i$ is non influential.
\end{proof}

\subsection{Screening on $L_1$-ball Constrained Problems}\label{app:l1ball_screening}

\begin{proof}[\textbf{Proof of Theorem \ref{thm:l1_constrained_screening}}]
In the constrained Lasso case, we have $g(\xv)=\id_{\ball_{L_1}}(\xv)$ and by Lemma \ref{lem:subgrad_ind}
\begin{align}
 \partial g(\xv^\star)  =& \left\{ \sv  \ |\   \forall \zv \in {\ball_{L_1}} \; \ \sv^\top(\zv-\xv^\star) \le 0  \quad  \right\} \notag \\
  =& \left\{ \sv  \ |\   \forall \zv \in {\ball_{L_1}} \; \ \sv^\top \zv \le \sv^\top \xv^\star   \quad      \right\} \label{eq:subgrad_l1_ball}
\end{align}
Now, by the optimality condition \eqref{eq:opt_g}, $-A^\top \wv^\star \in \ \partial g(\xv^\star)$ and since this holds, hence $-A^\top \wv^\star$ should satisfy the required constrained which is needed to be in the set of subgradients of $\partial g(\xv^\star)$ according to conditions in equation \eqref{eq:subgrad_l1_ball}. Hence, 
\begin{align}
 &\ (-A^\top \wv^\star)^\top \zv \le (-A^\top \wv^\star)^\top \xv^\star   \quad  \forall \zv \in {\ball_{L_1}}     \\
\Rightarrow & \ (A^\top \wv^\star)^\top \xv^\star  \le (A^\top \wv^\star)^\top \zv  \quad     \forall \zv \in {\ball_{L_1}} \\
\Rightarrow  & \  (A^\top \wv^\star)^\top \xv^\star  \le \underset{z}{\min}(A^\top \wv^\star)^\top \zv  \quad     s.t \ \ \ \zv \in {\ball_{L_1}} \ \\
\Rightarrow & \ \ (A^\top \wv^\star)^\top \xv^\star  \le - \underset{i}{\max} \ \abs{\av_i^\top \wv^\star} \ \label{min_explain_l1}  \\
\Rightarrow & \ (A\xv^\star)^\top \wv^\star \le - \underset{i}{\max} \ \abs{\av_i^\top \wv^\star} \\
\Rightarrow & \ (A\xv^\star)^\top \wv^\star  = - \underset{i}{\max} \ \abs{\av_i^\top \wv^\star} \  \label{last_line_l1}
\end{align}
Equation \eqref{min_explain_l1} is due to the fact that $\zv$ lie in the $L_1$-ball and hence minimum value of $(A^\top \wv^\star)^\top \zv$ is $ - \underset{i}{\max} \ \abs{\av_i^\top \wv^\star}$ and Equation \eqref{last_line_l1} also comes from the same fact that $\xv^\star$ lie in the $L_1$-ball and hence $(A\xv^\star)^\top\wv^\star$ can not be smaller than $- \underset{i}{\max} \ \abs{\av_i^\top \wv^\star} $. That implies these two quantities need to be equal and all the $i$'s where this equality doesn't hold refers to $x_i^\star = 0$ for all such  $i$'s. Hence whenever these two quantities are not equal this holds:
\begin{align*}
-  \abs{\av_i^\top \wv^\star} > (A\xv^\star)^\top \wv^\star  \ \Rightarrow\  x_i^\star  = 0\\
\ \Rightarrow\  \abs{\av_i^\top \wv^\star}  + (A\xv^\star)^\top \wv^\star <0 \ \Rightarrow\  x_i^\star  = 0
\end{align*}
\end{proof}

\begin{proof}[\textbf{Proof of Theorem \ref{thm:l1_smooth_constrained}}] 
Using optimality condition \eqref{eq:opt_f}, we know that $ \wv^\star \in \partial f(A\xv)$
\begin{align} %
\abs{\av_i^\top \wv^\star}  + (A\xv^\star)^\top \wv^\star &= \abs{\av_i^\top \nabla f(A\xv^\star)}  + (A\xv^\star)^\top \nabla f(A\xv^\star) \quad\\ 
&= \abs{\av_i^\top ( \nabla f(A\xv) - \nabla f(A\xv) + \nabla f(A\xv^\star))}  + (A\xv^\star)^\top \nabla f(A\xv^\star)\\ 
&\le \abs{\av_i^\top \nabla f(A\xv)} + \abs{\av_i^\top ( \nabla f(A\xv^\star) - \nabla f(A\xv))} \notag\\
& \qquad \qquad \qquad +(A\xv^\star - A\xv +A\xv)^\top \nabla f(A\xv^\star)\\ %
&= \abs{\av_i^\top \nabla f(A\xv)} + \abs{\av_i^\top ( \nabla f(A\xv^\star) - \nabla f(A\xv))} \notag\\
& \qquad \qquad \qquad + (A\xv) ^\top \nabla f(A\xv^\star) - ( A\xv - A\xv^\star)^\top \nabla f(A\xv^\star) \label{eq:optimal_x_star_fAx}\\
&\le \abs{\av_i^\top \nabla f(A\xv)} + \abs{\av_i^\top ( \nabla f(A\xv^\star) - \nabla f(A\xv))}+(A\xv)^\top \nabla f(A\xv^\star) \label{eq_l1_proof_opt_f}\\ %
&\le \abs{\av_i^\top \nabla f(A\xv)} + \abs{\av_i^\top ( \nabla f(A\xv^\star) - \nabla f(A\xv))}\notag \\
& \qquad \qquad \qquad +(A\xv)^\top (\nabla f(A\xv^\star) - \nabla f(A\xv) +  \nabla f(A\xv))\\ 
&\le \abs{\av_i^\top \nabla f(A\xv)} + \abs{\av_i^\top ( \nabla f(A\xv^\star) - \nabla f(A\xv))} + (A\xv)^\top\nabla f(A\xv) \notag \\
& \qquad \qquad \qquad +(A\xv)^\top (\nabla f(A\xv^\star) - \nabla f(A\xv))\\ 
&\le \abs{\av_i^\top \nabla f(A\xv)} + (A\xv)^\top \nabla f(A\xv) + L(\norm{\av_i} + \norm{A\xv})\sqrt{\frac{\GW(\xv)}{\mu}}\label{eq_l1_proof_last} 
\end{align}
Eq. \eqref{eq:optimal_x_star_fAx} comes from the fact that at the optimal point $\xv^\star$, the inequality $(A\xv -A\xv^\star )^\top\nabla f(A\xv^\star) \ge 0$ holds $\forall \xv$.
Hence using Theorem \ref{thm:l1_constrained_screening}, Lemma \ref{lem:FWgaprestriction} and Corollary \ref{cor:restric:smooth}, we get the screening rule for $L_1$ constrained as whenever,\\ 
$\abs{\av_i^\top \nabla f(A\xv)} + (A\xv)^\top \nabla f(A\xv) + L(\norm{\av_i} + \norm{A\xv})\sqrt{\frac{\GW(\xv)}{\mu}} < 0 \ \Rightarrow\  \xv_i^\star = 0$
\end{proof}

\subsection{Screening on Elastic Net Constrained Problems}\label{app:elastic_net_screening}

\begin{proof}[\textbf{Proof of Theorem \ref{thm:elastic_net_constrained}}] 
Formulation :
\begin{align*}
&\min_{\xv} f(A\xv) \\
s.t \ \ & \alpha \|\xv\|_1 + \frac{(1 - \alpha) }{2} \|\xv\|_2^2 \leq 1 \\
& \Rightarrow \alpha \sum_{i=1}^n |\xv_i| + \frac{(1 - \alpha) }{2} \sum_{i = 1}^n \xv_i^2 \leq 1
\end{align*}
In the elastic net constrained case, we have $g(\xv)=\id_{\ball_{L_E}}(\xv)$ where $\id_{\ball_{L_E}}$ is elastic net norm ball. That implies 
$$\xv \in \id_{\ball_{L_E}} \ \ : \ \ \alpha \|\xv\|_1 + (1-\alpha) \|\xv\|_2^2 \leq 1$$
From the subgradient of indicator function and optimality condition for $A$ and $B$ framework
\begin{align}
 \partial g(\xv^\star)  =& \left\{ \sv  \ |\   \forall \zv \in {\ball_{L_1}} \; \ \sv^\top(\zv-\xv^\star) \le 0  \quad  \right\} \notag \\
  =& \left\{ \sv  \ |\   \forall \zv \in {\ball_{L_1}} \; \ \sv^\top \zv \le \sv^\top \xv^\star   \quad      \right\} \label{eq:subgrad_l1_ball}
\end{align}
Now, by the optimality condition \eqref{eq:opt_g}, $-A^\top \wv^\star \in \ \partial g(\xv^\star)$ and since this holds, hence $-A^\top \wv^\star$ should satisfy the required constrained which is needed to be in the set of subgradients of $\partial g(\xv^\star)$ according to conditions in equation \eqref{eq:subgrad_l1_ball}. Hence,

\begin{align}
 &\ (-A^\top \wv^\star)^\top \zv \le (-A^\top \wv^\star)^\top \xv^\star   \quad  \forall \zv \in {\ball_{L_E}}     \\
\Rightarrow & \ (A^\top \wv^\star)^\top \xv^\star  \le (A^\top \wv^\star)^\top \zv  \quad     \forall \zv \in {\ball_{L_E}} \\
\Rightarrow  & \  (A^\top \wv^\star)^\top \xv^\star  \le \underset{z}{\min}(A^\top \wv^\star)^\top \zv  \quad     s.t \ \ \ \zv \in {\ball_{L_E}} \
\end{align}

Since $\xv^\star$ is a feasible point hence $(A^\top \wv^\star)^\top \xv^\star  = \underset{z}{\min}(A^\top \wv^\star)^\top \zv  \quad     s.t \ \ \ \xv^\star,\zv \in {\ball_{L_E}}$. 
At the point where above equaliy hold $\xv^\star$ would be same as optimal $\zv$. Hence the problem reduces to,
\begin{align*}
&\min \ \  (A^\top \wv^\star)^\top \zv \\
s.t \ \ & \alpha \|\zv\|_1 + \frac{(1 - \alpha) }{2} \|\zv\|_2^2 \leq 1 \\
& \Rightarrow \alpha \sum_{i=1}^n |\zv_i| + \frac{(1 - \alpha) }{2} \sum_{i = 1}^n \zv_i^2 \leq 1
\end{align*}
Without the loss of generality let us assume that for $i \in \{1 \ldots m \}, \ \ z_i \geq 0$  and $i \in \{m+1 \ldots n \}, \ \ z_i \leq 0$. Hence the optimization problem can be written as :
\begin{align} \label{eq:min_elastic_ball}
&\min \ \  (A^\top \wv^\star)^\top \zv \\
s.t \ \ &  \alpha \big( \sum_{i=1}^m \zv_i  - \sum_{i=m+1}^n \zv_i \big ) + \frac{(1 - \alpha) }{2} \sum_{i = 1}^n \zv_i^2 \leq 1 \notag \\
& -z_i \leq 0  \ \ for \ \ i \in \{1 \ldots m \} \notag \\
& z_i \leq 0  \ \ for \ \ i \in \{m+1 \ldots n \}  \notag
\end{align}
Writing lagrangian for optimization problem \eqref{eq:min_elastic_ball}
\begin{align*}
\mathcal{L}(\zv,\lambda, u) =  (A^\top \wv^\star)^\top \zv  - \sum_{i=1}^m \lambda_i z_i + \sum_{i = m+1}^n \lambda_i z_i + u \Big (  \alpha \big( \sum_{i=1}^m \zv_i  - \sum_{i=m+1}^n \zv_i \big ) + \frac{(1 - \alpha) }{2} \sum_{i = 1}^n \zv_i^2 -1  \Big )
\end{align*}
Also optimization conditions are $\lambda_i \geq 0$, $\lambda_i z_i = 0$ and $\alpha \big( \sum_{i=1}^m \zv_i  - \sum_{i=m+1}^n \zv_i \big ) + (1 - \alpha) \sum_{i = 1}^n \zv_i^2  =1$. Also we conclude from above that if $\lambda_i > 0 \Rightarrow z_i = 0$. 
From first order optimality condition, \\
For $i \in \{1 \ldots m \}$ 
\begin{align} \label{eq:till_m1}
\av_i^\top \wv^\star - \lambda_i  =  - u\big( \alpha + (1-\alpha)|z_i|\big)
\end{align}
For $i \in \{m+1 \ldots n \}$ 
\begin{align} \label{eq:till_n1}
\av_i^\top\wv^\star + \lambda_i  = - u\big( \alpha + (1-\alpha)|z_i|\big)
\end{align}
Now in equations \eqref{eq:till_m1} and \eqref{eq:till_n1} we multiply by $z_i$ and add them. We get:
\begin{align} \label{eq:final_elastic}
(A^\top \wv^\star)^\top \zv + u [1 + \frac{(1 - \alpha) }{2} \|z\|_2^2] = 0
\end{align}
From equations \eqref{eq:till_m1}, \eqref{eq:till_n1}, \eqref{eq:final_elastic} and optimality conditions discussed above we get:
$$|\av_i^\top \wv^\star | + (A^\top \wv^\star)^\top z \Big [ \frac{ \alpha + (1 - \alpha)|z_i| }{ 1 + \frac{(1 - \alpha) }{2} \|z\|_2^2} \Big ] < 0  \Rightarrow z_i = 0$$
As discussed above $\xv^\star$ share same solution as optimal $z$. Hence
$$|\av_i^\top \wv^\star | + (A^\top \wv^\star)^\top \xv^\star \Big [ \frac{ \alpha  }{ 1 + \frac{(1 - \alpha) }{2} \|\xv^\star\|_2^2} \Big ] < 0  \Rightarrow x_i^\star = 0$$

\end{proof}

\begin{proof}[\textbf{Proof of Theorem \ref{thm:elastic_smooth_constrained}}] 
Using optimality condition \eqref{eq:opt_f}, we know that $ \wv^\star \in \partial f(A\xv)$
\begin{align} %
|\av_i^\top \wv^\star | + (A^\top \wv^\star)^\top \xv^\star & \Big [ \frac{ \alpha  }{ 1 + \frac{(1 - \alpha) }{2} \|\xv^\star\|_2^2} \Big ]  \leq |\av_i^\top \wv^\star | + (A^\top \wv^\star)^\top \xv^\star \big[\frac{2 \alpha}{3-\alpha} \big] \\
&= \abs{\av_i^\top \nabla f(A\xv^\star)}  + (A\xv^\star)^\top \nabla f(A\xv^\star) \big[\frac{2 \alpha}{3-\alpha} \big] \quad\\ 
&= \abs{\av_i^\top ( \nabla f(A\xv) - \nabla f(A\xv) + \nabla f(A\xv^\star))} \notag \\ 
& \qquad \qquad \qquad \qquad   \ \ + (A\xv^\star)^\top \nabla f(A\xv^\star) \big[\frac{2 \alpha}{3-\alpha} \big]\\ 
&\le \abs{\av_i^\top \nabla f(A\xv)} + \abs{\av_i^\top ( \nabla f(A\xv^\star) - \nabla f(A\xv))} \notag\\
& \qquad \qquad \qquad +(A\xv^\star - A\xv +A\xv)^\top \nabla f(A\xv^\star) \big[\frac{2 \alpha}{3-\alpha} \big] \\ %
&= \abs{\av_i^\top \nabla f(A\xv)} + \abs{\av_i^\top ( \nabla f(A\xv^\star) - \nabla f(A\xv))} \notag\\
&  + (A\xv) ^\top \nabla f(A\xv^\star) \big[\frac{2 \alpha}{3-\alpha} \big] - ( A\xv - A\xv^\star)^\top \nabla f(A\xv^\star) \big[\frac{2 \alpha}{3-\alpha} \big] \label{eq:optimal_x_star_fAx2}\\
&\le \abs{\av_i^\top \nabla f(A\xv)} + \abs{\av_i^\top ( \nabla f(A\xv^\star) - \nabla f(A\xv))} \notag \\ &\qquad \qquad  \ \ \qquad \qquad  \qquad +(A\xv)^\top \nabla f(A\xv^\star) \big[\frac{2 \alpha}{3-\alpha} \big] \label{eq_elastic_proof_opt_f}\\ %
&\le \abs{\av_i^\top \nabla f(A\xv)} + \abs{\av_i^\top ( \nabla f(A\xv^\star) - \nabla f(A\xv))}\notag \\
& \qquad \qquad  +(A\xv)^\top (\nabla f(A\xv^\star) - \nabla f(A\xv) +  \nabla f(A\xv))\big[\frac{2 \alpha}{3-\alpha} \big]\\ 
&\le \abs{\av_i^\top \nabla f(A\xv)} + \abs{\av_i^\top ( \nabla f(A\xv^\star) - \nabla f(A\xv))}  \notag \\
& + (A\xv)^\top\nabla f(A\xv)\big[\frac{2 \alpha}{3-\alpha} \big] +(A\xv)^\top (\nabla f(A\xv^\star) - \nabla f(A\xv))\big[\frac{2 \alpha}{3-\alpha} \big]\\ 
&\le \abs{\av_i^\top \nabla f(A\xv)} + (A\xv)^\top \nabla f(A\xv)\big[\frac{2 \alpha}{3-\alpha} \big] \notag \\
& \qquad \qquad \qquad \qquad+ L(\norm{\av_i} + \norm{A\xv} \big[\frac{2 \alpha}{3-\alpha} \big])\sqrt{\frac{\GW(\xv)}{\mu}}\label{eq_elastic_proof_last} 
\end{align}
Eq. \eqref{eq:optimal_x_star_fAx2} comes from the fact that at the optimal point $\xv^\star$, the inequality $(A\xv -A\xv^\star )^\top\nabla f(A\xv^\star) \ge 0$ holds $\forall \xv$.
Hence using Theorem \ref{thm:l1_constrained_screening}, Lemma \ref{lem:FWgaprestriction} and Corollary \ref{cor:restric:smooth}, we get the screening rule for $L_1$ constrained as whenever,\\ 
$$\abs{\av_i^\top \nabla f(A\xv)} + (A\xv)^\top \nabla f(A\xv) \big[ \frac{2 \alpha}{3-\alpha} \big ] + L(\norm{\av_i}_2 + \norm{A\xv}_2\big[ \frac{2 \alpha}{3-\alpha} \big ])\sqrt{\frac{\GW(\xv)}{\mu}} < 0  \notag
 \Rightarrow \ \xv_i^\star = 0$$
\end{proof}

\subsection{Screening for Box Constrained Problems}\label{app:box_screening}
\paragraph{Screening for General Box Constrained Problems (Section \ref{subsec:box_screening})}
\begin{proof}[\textbf{Proof of Theorem \ref{thm:box_constrained}}]
The box-constrained case can be seen in the form of the partially separable optimization problem pair \eqref{eq:primalS} and \eqref{eq:dualS}. According to optimality condition \eqref{eq:opt_gi} for this case, we have
\begin{align}
 -\av_i^\top \wv^\star \in&\ \partial g_i(x_i^\star) &&\forall i %
\end{align}
Now from the definition of subgradient for an indicator function as given in Lemma \ref{lem:subgrad_ind}.  Also since $x_i$ is a number now, we will get rid of the transpose here.
\begin{align}
 \partial g(x_i^\star)  =& \left\{ s  \ |\    0  \le z  \le C , \; \ s(z-x_i^\star) \le 0  \quad    \right\} \notag \\
  =& \left\{ s  \ |\    0  \le z \le C ,  \; \ s z \le s x_i^\star   \quad     \right\}  \label{eq:subgrad_box}
\end{align}

Now, by the optimality condition \eqref{eq:opt_gi}, $- \av_i^\top \wv^\star \in \ \partial g(x_i^\star)$ and since this holds, hence $- \av_i^\top \wv^\star$ should satisfy the required constrained which is needed to be in the set of subgradients of $\partial g(x_i^\star)$ according to conditions in Equation \eqref{eq:subgrad_box}. Hence,
\begin{align} \label{part_box}
&   \ (- \av_i^\top \wv^\star) z \le (- \av_i^\top \wv^\star) x_i^\star   \quad \forall z \ \textrm{ s.t }  \  0  \le z \le C , \;  \nonumber \\
\Rightarrow &\ \underset{z}{\min}(\av_i^\top \wv^\star) z \ge (\av_i^\top \wv^\star) x_i^\star   \quad s.t \ \    \ 0  \le z \le C   %
\end{align}
Now \eqref{part_box} can be manipulated in two ways \\
\begin{enumerate}
\item[Case 1] \begin{align*}
\av_i^\top \wv^\star > 0 \Rightarrow
& \ \underset{z}{\min}(\av_i^\top \wv^\star)^\top z \ge (\av_i^\top \wv^\star) x_i^\star   \quad  s.t \ \  \ 0  \le z \le C \\
\Rightarrow&   \ 0 \ge (\av_i^\top \wv^\star)^\top x_i^\star   \quad  
\end{align*}
But since $\av_i^\top \wv^\star  > 0$ and also $x_i^\star \ge 0$ hence $(\av_i^\top \wv^\star)x_i^\star \not < 0$. This implies $(\av_i^\top \wv^\star)x_i^\star = 0$ and hence if $ \av_i^\top \wv^\star > 0 \ \Rightarrow\  x_i^\star = 0$
\item[Case 2]
\begin{align*}
\av_i^\top \wv^\star < 0 \Rightarrow
& \ \underset{z}{\min}(\av_i^\top \wv^\star) z \ge (\av_i^\top \wv^\star) x_i^\star   \quad  s.t \ \  \ 0  \le z \le C \\
\Rightarrow&   \ (\av_i^\top \wv^\star)C \ge (\av_i^\top \wv^\star) x_i^\star   \quad  
\end{align*}
But since $\av_i^\top \wv^\star  < 0$ and also $x_i^\star \le C$ hence $(\av_i^\top \wv^\star)x_i^\star \not < (\av_i^\top \wv^\star)C$. This implies $(\av_i^\top \wv^\star)x_i^\star = (\av_i^\top \wv^\star)C $ and hence if $ \av_i^\top \wv^\star < 0 \ \Rightarrow\  x_i^\star = C$
\end{enumerate}
Final optimality arguments can be given as 
\begin{align} \label{eq:optimality_argument}
\av_i^\top \wv^\star > 0 \ \Rightarrow\  x_i^\star = 0 \notag \\
\av_i^\top \wv^\star < 0 \ \Rightarrow\  x_i^\star = C
\end{align}

Now 
\begin{align}
\av_i^\top \wv^\star = \av_i^\top (\wv^\star + \wv -\wv) = \av_i^\top \wv + \av_i^\top(\wv^\star - \wv) \notag \\
\av_i^\top \wv - \|\av_i\|_2 \|\wv - \wv^\star\|_2 \le \av_i^\top \wv^\star \le \av_i^\top \wv + \|\av_i\|_2\|\wv - \wv^\star\|_2 \label{eq:box_constr_util}
\end{align}
Since $f$ is $L$-Lipschitz gradient hence $f^*$ is $1/L$-strongly convex, hence using Lemmas \ref{lem:stronglyconvex} and \ref{lem:smooth_convex_condition}, Equation \eqref{eq:optimality_argument} becomes
\begin{align}
\av_i^\top \wv - \|\av_i\|_2 \sqrt{2L\,\G(\xv)} \le \av_i^\top \wv^\star \le \av_i^\top \wv + \|\av_i\|_2 \sqrt{2L\,\G(\xv)} \label{eq:box_constr_util2}
\end{align}
Hence using equation \eqref{eq:box_constr_util2} and earlier arguments we get,
\begin{align*}
&\av_i^\top \wv^\star > 0 \ \Rightarrow\  x_i^\star   =0 \nonumber \\
\Rightarrow &\av_i^\top \wv -  \norm{\av_i}_2\sqrt{2L\,\G(\xv)}>0 \ \Rightarrow\   x_i^\star =0 
\end{align*}
And if 
\begin{align*}
&\av_i^\top \wv^\star < 0 \ \Rightarrow\  x_i^\star   =C \nonumber  \\
\Rightarrow &\av_i^\top \wv +  \norm{\av_i}_2\sqrt{2L\,\G(\xv)}>0 \ \Rightarrow\   x_i^\star =C 
\end{align*}

\end{proof}

\paragraph{Screening on SVM with hinge loss and no bias}
\begin{proof}[\textbf{Proof of Corollary \ref{cor:SVMhinge}}]
Here the primal problem is given by:
\begin{equation}
\begin{aligned}
& \underset{\wv,\epsilonv}{\min} & & \tfrac{1}{2} \wv^\top\wv + C \one^\top\epsilonv\\
& \text{s.t.} & &  \wv^\top\av_i \ge 1-\epsilon_i & \forall i \in \{1:p\}\\
& & &  \epsilon_i \ge 0 & \forall i \in \{1:p\} 
\end{aligned}
\end{equation}
A dual formulation of the problem can be written as:
\begin{equation}
\begin{aligned}
& \underset{\xv}{\min} & & -\xv^\top\one + \tfrac{1}{2} \xv^\top A^\top A\xv\\
& \text{s.t.} & &  0 \le \xv \le C\one
\end{aligned}
\end{equation}
Theorem \ref{thm:box_constrained} is applied on the dual formulation. The objective function  $\frac{1}{2}\xv^\top A^\top A\xv - \xv^\top\one$  is  strongly convex with parameter $1$ and its derivative Lipschitz continuous with parameter $1$. The duality gap between primal and dual feasible points $\G(\wv,\epsilonv,\xv)$ is now used as suboptimality certificate which can play the role of the upper bound $\|\wv - \wv^\star\|$ using Lemma \ref{cor:Dgaprestriction}. For a given $\xv$ a primal feasible point can be obtained by setting $\wv=A\xv$ and $\epsilonv$ minimal such that the first constraint of the primal problem is satisfied. Using the obtained point for the duality gap, it only depends on the point $\xv$. All together this gives the screening rule:
\begin{eqnarray}
\av_i^\top A\xv +1 >   \norm{\av_i} \sqrt{2\G(\xv)} & \Rightarrow & x_i^\star = 0\\
\av_i^\top A\xv +1 < - \norm{\av_i} \sqrt{2\G(\xv)} & \Rightarrow & x_i^\star = C
\end{eqnarray}
\end{proof}
\textbf{Note -} Since the primal and dual of hinge loss SVM have very nice structure with smooth quadratic function with an addition to piece-wise linear convex function, hence it is not hard to show that both primal and dual function is $1$ strongly convex as shown in \cite{Zimmert:2015ue}. For more detailed proof, we recommend to go through \cite{Zimmert:2015ue}. Now  for an instance, if we write duality gap function as a function of $\wv$ then $$\G(\wv) \ge \G(\wv^\star) + \nabla \G(\wv^\star)^\top (\wv - \wv^\star) + \norm{\wv - \wv^\star}_2^2 $$ 
Since strong duality hold in SVM case, hence at optimal point $\wv^\star$,  $\G(\wv^\star) = 0$. Finally we get,
$$\G(\wv) \ge  \norm{\wv - \wv^\star}_2^2$$
Hence the screening rule comes out as given in \cite{Zimmert:2015ue}:
\begin{eqnarray}
\av_i^\top A\xv +1 >   \norm{\av_i} \sqrt{\G(\xv)} & \Rightarrow & x_i^\star = 0\\
\av_i^\top A\xv +1 < - \norm{\av_i} \sqrt{\G(\xv)} & \Rightarrow & x_i^\star = C
\end{eqnarray}

\section{Screening on Penalized Problems}
\label{app:penalized}

\subsection{Screening $L_1$-regularized Problems} \label{app:penalized_problem_l1}

\begin{lemma} \label{lem:l1_regularized}
Considering general $L_1$-regularized optimization problems
\begin{equation}\label{eq:l1_regularized}
\begin{aligned}
& \underset{\xv \in \R^p}{\min} & & f(A\xv) +  \lambda \norm{\xv}_1 
\end{aligned}
\end{equation}
At optimum points $\xv^\star$ and dual optimal point $\wv^\star$, the following rule is satisfied for the above problem formulation \eqref{eq:l1_regularized} :
\begin{align*}
\abs{\av_i^\top \wv^\star} < \lambda 
\ \Rightarrow \ x_i^\star = 0  %
\end{align*}
\end{lemma}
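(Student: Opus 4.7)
The plan is to recognize that the penalty $g(\xv) = \lambda \|\xv\|_1$ is separable as $g(\xv) = \sum_i g_i(x_i)$ with $g_i(t) = \lambda |t|$, so the problem fits the partially separable framework of Section~\ref{sec:primaldual}. This means I can use the componentwise optimality condition~\eqref{eq:opt_gi}, namely $-\av_i^\top \wv^\star \in \partial g_i(x_i^\star)$ for every $i$, rather than having to manipulate the full subdifferential of the $\ell_1$ norm.

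The key step is to compute $\partial g_i(t)$ explicitly for the scalar absolute value. This is a standard calculation:
\begin{equation*}
\partial (\lambda |t|) = \begin{cases} \{\lambda\} & t > 0, \\ \{-\lambda\} & t < 0, \\ [-\lambda, \lambda] & t = 0.\end{cases}
\end{equation*}
In particular, for any $t \neq 0$ the subdifferential consists only of the values $\pm \lambda$, each having absolute value exactly $\lambda$.

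Now I would combine these two observations. The optimality condition forces $-\av_i^\top \wv^\star$ to lie in $\partial g_i(x_i^\star)$. If the hypothesis $|\av_i^\top \wv^\star| < \lambda$ holds strictly, then $-\av_i^\top \wv^\star$ cannot equal either $\lambda$ or $-\lambda$, so it cannot belong to $\partial g_i(t)$ for any $t \neq 0$. The only remaining possibility is $x_i^\star = 0$, in which case $\partial g_i(0) = [-\lambda, \lambda]$ does contain $-\av_i^\top \wv^\star$. This yields the claimed implication.

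There is really no hard step here: the subdifferential calculation is routine, and the separability provided by equations~\eqref{eq:opt_gi}--\eqref{eq:opt_gistar} does all the heavy lifting by reducing a potentially multivariate condition to a scalar one per coordinate. The lemma is essentially a direct dictionary translation of Fenchel--Rockafellar optimality for the $\ell_1$ case, and it serves as the foundation on which Theorem~\ref{thm:smooth_pen_lasso} is built by replacing the unknown $\wv^\star$ with $\nabla f(A\xv)$ and paying the corresponding price via Lemma~\ref{lem:stronglyconvex} / Corollary~\ref{cor:Dgaprestriction}.
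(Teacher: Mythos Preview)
Your proof is correct and matches the paper's own argument: the paper also invokes the separable optimality condition~\eqref{eq:opt_gi}, writes down the subdifferential of $\lambda|t|$ exactly as you do, and draws the same conclusion. (The paper additionally sketches an alternative derivation via the dual condition~\eqref{eq:opt_gistar} and the conjugate $g_i^*=\id_{L_\infty}$, but your route is the one it labels as the cleaner ``another view''.)
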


\begin{proof}[\textbf{Proof} ]

Since the optimization problem \eqref{eq:l1_regularized} comes under the partially separable framework and we can use the first order optimality condition \eqref{eq:opt_gi} as well as \eqref{eq:opt_gistar} to derive screening rules for the problem. Also we know that, the conjugate of the norm function is the indicator function of its dual norm ball. By the optimality condition \eqref{eq:opt_gistar}, we know that $$x_i \in \ \partial g_i^*(-\av_i^\top\wv)$$
here $g_i^*$ is the indicator function written as $\id_{L_\infty} (-\av_i^\top \wv )$. Hence for the indicator function $g^*$ by Lemma \ref{lem:subgrad_ind}
\begin{align*} 
 \partial g_i^*(-\av_i^\top\wv^\star)  =& \left\{ s \vert  \ \ \forall \zv \ s.t \ \ \abs{\frac{\av_i^\top \zv}{\lambda}} \le 1 \:; \ s(- \av_i ^\top \zv + \av_i^\top\wv^\star) \le 0  \quad    \right\} \\
  =& \left\{ s \vert  \ \ \forall \zv \ s.t \ \ \abs{\av_i^\top \zv }\le \lambda\:; \ s (\av_i ^\top \zv  )\ge s (\av_i^\top\wv^\star )\quad   \right\} 
\end{align*}
Since the optimality condition \eqref{eq:opt_gistar} holds hence $-x_i^\star$ should satisfy the required constrained which is needed to be in the set of subgradients of $\partial g_i^*(-\av_i^\top \wv^\star)$ according to conditions given above. That is 

\begin{align}
&-x_i^\star (\av_i ^\top \zv  )\le -x_i^\star (\av_i^\top\wv^\star )   \quad \forall \zv \ s.t \ \ \abs{\av_i^\top \zv }\le \lambda\  \\
&x_i^\star (\av_i ^\top \zv  )\ge x_i^\star (\av_i^\top\wv^\star ) \quad \forall \zv \ s.t \ \ \abs{\av_i^\top \zv }\le \lambda\  \\
\Rightarrow  & \  x_i^\star (\av_i^\top\wv^\star )  \le \underset{z}{\min}(x_i^\star (\av_i ^\top \zv  ))  \quad     s.t \ \ \ \abs{\av_i^\top \zv }\le \lambda\ \notag \\
\end{align}

\begin{itemize}
\item[  Case 1: ] $x_i^\star > 0$.
\begin{align}
&\  x_i^\star (\av_i^\top\wv^\star )  \le \underset{z}{\min}(x_i^\star (\av_i ^\top \zv  ))  \quad     s.t \ \ \ \abs{\av_i^\top \zv }\le \lambda\ \notag \\
& \Rightarrow \  x_i^\star (\av_i^\top\wv^\star )  \le -\lambda x_i^\star \notag \\
&\Rightarrow (\av_i^\top\wv^\star )  \le - \lambda  \notag \\
&\Rightarrow (\av_i^\top\wv^\star )  = - \lambda \label{eq:pen_dual_proof_part1}
\end{align}
Equation \eqref{eq:pen_dual_proof_part1} comes from the fact that $\abs{\av_i^\top \wv^\star} \le \lambda$
\item[Case 2:] $x_i ^\star< 0$.
\begin{align}
&\  x_i^\star (\av_i^\top\wv^\star )  \le \underset{z}{\min}(x_i^\star (\av_i ^\top \zv  ))  \quad     s.t \ \ \ \abs{\av_i^\top \zv }\le \lambda\ \notag \\
&\Rightarrow \  x_i^\star (\av_i^\top\wv^\star )  \le \lambda x_i^\star \notag \\
&\Rightarrow (\av_i^\top\wv^\star )  \ge  \lambda  \notag \\
&\Rightarrow (\av_i^\top\wv^\star )  =  \lambda \label{eq:pen_dual_proof_part2}
\end{align}
Equation \eqref{eq:pen_dual_proof_part1} comes from the fact that $\abs{\av_i^\top \wv^\star} \le \lambda$

\item[Case 3:] $x_i^\star = 0$.\\
Since if we assume $f$ as a continuous smooth function then $\av_i^\top \wv^\star$  is also continuous. Now if we consider arguments given for $x_i ^\star< 0$ and $x_i ^\star > 0$ we conclude that $\abs{\av_i^\top \wv^\star} = \lambda$ in all of the above two cases. Since $x_i^\star = 0$ is in the domain of the function \eqref{eq:primal}, hence at $x_i^\star = 0$, $\av_i^\top \wv^\star$ will lie in the open range of $-\lambda$ to $\lambda$. Which implies whenever $\abs{\av_i^\top \wv^\star} < \lambda$, then  $x_i^\star = 0$
\end{itemize}

Another view on the proof can be derived from the optimality condition \eqref{eq:opt_gi}.

The optimization problem \eqref{eq:l1_regularized} can be taken as partially separable problem and from the optimality condition \eqref{eq:opt_gi} kk
\begin{align}
&-\av_i^\top\wv^\star \in \partial g_i(x_i^\star) \label{eq:l1_1}\\ %
&\partial g_i(x_i^\star) \in  \left\{\begin{array}{ll}
		 \lambda \frac{x_i^\star}{\abs{x_i^\star}}   &\mbox{if } x_i \neq 0 \\ \relax
		[- \lambda,  \lambda ] & \mbox{if } x_i = 0
	\end{array} \right. \label{eq:l1_2}
\end{align}
From equations \eqref{eq:l1_1} and \eqref{eq:l1_2} we conclude that if
\begin{align*}
\abs{\av_i^\top \wv^\star} < \lambda \ \Rightarrow\  x_i^\star = 0
\end{align*}
\end{proof}

\begin{proof}[\textbf{Proof of Theorem \ref{thm:smooth_pen_lasso}}]
From Equation \eqref{eq:opt_f}, we know that $\wv^\star \in \partial f(A\xv^\star)$. Hence from Lemma \ref{lem:l1_regularized},
\begin{align}
\abs{\av_i^\top \wv^\star} &= \abs{\av_i^\top( \wv^\star -\wv  + \wv)} \notag \\
& \le \abs{\av_i^\top\wv} + \abs{\av_i^\top( \wv^\star - \wv)} \notag \\
& \le \abs{\av_i^\top\wv} + \norm{\av_i}_2\|\wv^\star - \wv\|_2 \notag \\
& \le \abs{\av_i^\top\wv} + \norm{\av_i}_2\sqrt{ 2L\,\G(\xv)} \label{eq:l1_reg_last} 
\end{align}
Eq. \eqref{eq:l1_reg_last} comes from Corollary \ref{cor:Dgaprestriction}. Now using Lemma \ref{lem:l1_regularized} and equation \eqref{eq:l1_reg_last}, we get
\begin{align*}
\abs{\av_i^\top \nabla f(A\xv)} < \lambda - \norm{\av_i}_2\sqrt{2L\,\G(\xv)} \ \Rightarrow\  \xv_i^\star=0
\end{align*}
\end{proof}

\paragraph{Penalized Lasso.}
Screening in this case can be derived from the existing ``gap safe'' paper \citep{Ndiaye:2015wj}. For completeness we here show that the same result follows from our Theorem \ref{thm:smooth_pen_lasso}:
\begin{corollary}\label{cor:pen_lasso}
\textbf{Penalized Lasso} Consider an optimization problem of the form:
\begin{align*}
\min_{\xv\in\R^n} \ \tfrac{1}{2} \|A\xv - b\|_2^2 + \lambda \|\xv\|_1
\end{align*}
Then the screening rule is given by:
$\abs{\av_i^\top(A\xv-\bv)} < \lambda - \norm{\av_i}_2\sqrt{2\G(\xv)} \ \Rightarrow\ \xv_i^\star=0.$
\end{corollary}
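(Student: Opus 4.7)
The plan is to recognize this corollary as a direct specialization of Theorem~\ref{thm:smooth_pen_lasso} to the squared-loss case, so the work is purely to verify that the generic hypothesis applies and to compute the two problem-specific quantities that appear in the generic rule: the gradient $\nabla f(A\xv)$ and the smoothness constant $L$.

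First I would set $f(\yv) := \tfrac12 \|\yv - \bv\|_2^2$, so that the penalized Lasso objective matches the template $f(A\xv) + \lambda \|\xv\|_1$ of Theorem~\ref{thm:smooth_pen_lasso}. Then I would note that $f$ is differentiable with $\nabla f(\yv) = \yv - \bv$, so $\nabla f(A\xv) = A\xv - \bv$, and that $\nabla f$ is $1$-Lipschitz since $\|\nabla f(\yv) - \nabla f(\yv')\|_2 = \|\yv - \yv'\|_2$; hence $f$ is $L$-smooth with $L = 1$.

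Given these two observations, it remains only to substitute into the conclusion of Theorem~\ref{thm:smooth_pen_lasso}, which reads
\begin{equation*}
\abs{\av_i^\top \nabla f(A\xv)} < \lambda - \norm{\av_i}_2 \sqrt{2 L\, \G(\xv)} \ \Rightarrow\ x_i^\star = 0.
\end{equation*}
Replacing $\nabla f(A\xv)$ by $A\xv - \bv$ and $L$ by $1$ yields exactly
\begin{equation*}
\abs{\av_i^\top (A\xv - \bv)} < \lambda - \norm{\av_i}_2 \sqrt{2\, \G(\xv)} \ \Rightarrow\ x_i^\star = 0,
\end{equation*}
which is the claimed screening rule. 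There is no real obstacle here: the only conceptual check is the $1$-smoothness of the squared loss (immediate from the identity map of the gradient), and everything else is a rote substitution into a previously established theorem. The duality gap $\G(\xv)$ is of course to be interpreted as the Fenchel--Rockafellar gap of Section~\ref{sec:duality_gap_certificates} evaluated for the specific pair $(f, g)$ at hand.
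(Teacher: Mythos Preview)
Your proposal is correct and matches the paper's proof essentially verbatim: the paper also identifies $f(A\xv)=\tfrac12\norm{A\xv-\bv}^2$, computes $\wv=\nabla f(A\xv)=A\xv-\bv$ and $L=1$, and then invokes Theorem~\ref{thm:smooth_pen_lasso} directly.
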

\begin{proof}[\textbf{Proof of Corollary \ref{cor:pen_lasso}}]
By observing the cost function for penalized lasso it can be concluded that \\
$$f(A\xv) = \tfrac{1}{2}\norm{A\xv-\bv}^2, \quad 
\wv=A\xv-\bv, \quad \textrm{and } L=1$$\\
Now results from Theorem \ref{thm:smooth_pen_lasso} can be directly applied here and hence the screening rule becomes \\
\begin{align*}
\abs{\av_i^\top(A\xv-\bv)} < \lambda - \norm{\av_i}\sqrt{2\G(\xv)} \ \Rightarrow\ \xv_i^\star=0.
\end{align*}
\end{proof}
This result is known in the literature \citep{Ndiaye:2015wj}, and we recover it using our proposed general approach in this paper by using Theorem \ref{thm:smooth_pen_lasso}.\\

Also, by applying same trick as mentioned after the end of proof of Corollary \ref{cor:SVMhinge}, we can show that we can get rid of the factor $2$ here also. Here also it is not hard to see that primal and dual (\eqref{eq:primal} and \eqref{eq:dual})  both are $1$ strongly convex in the dual variable $\wv$. Hence by the same argument as made in the proof of Corollary \ref{cor:SVMhinge}, we get that $$\G(\wv) \ge  \norm{\wv - \wv^\star}_2^2$$
And the improved screening rule comes out to be
$$\abs{\av_i^\top(A\xv-\bv)} < \lambda - \norm{\av_i}\sqrt{\G(\xv)} \ \Rightarrow\  \xv_i^\star=0.$$

\paragraph{Logistic Regression with $L_1$-regularization}
\begin{corollary} \label{cor:logistic_l1}
\textbf{Logistic Regression with $L_1$-norm Penalization.}
The optimization problem for logistic regression with $L_1$ regularizer can be written in the form of:
\begin{align} \label{eq:logistic_l1}
\min_{\xv\in\R^n} \ \sum_{i=1}^{n} \log(\exp([A\xv]_i)+1) + \lambda \|\xv\|_1
\end{align} 
And screening rule for above problem can be written as :
\begin{align*}
\abs{\av_i^\top\left(\frac{\exp(A\xv)}{\exp(A\xv)+1}\right)} < \lambda - \norm{\av_i}_2\sqrt{2\G(\xv)}
\ \Rightarrow \ \xv_i^\star=0
\end{align*}
where $\left(\frac{\exp(A\xv)}{\exp(A\xv)+1}\right)$ is element wise vector whose $i_{th}$ element is $\left(\frac{\exp([A\xv]_i)}{\exp([A\xv]_i)+1}\right)$
\end{corollary}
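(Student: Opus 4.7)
The plan is to instantiate Theorem \ref{thm:smooth_pen_lasso} for the specific choice of $f$ corresponding to the logistic loss, namely
\[
f(\yv) \;=\; \sum_{i=1}^{n} \log\!\bigl(\exp(y_i)+1\bigr),
\]
so that the objective in \eqref{eq:logistic_l1} takes exactly the form $f(A\xv)+\lambda\|\xv\|_1$. Since the theorem's conclusion already matches the target modulo two pieces of data, the gradient $\nabla f(A\xv)$ and the smoothness constant $L$, the proof reduces to identifying both.

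First I would compute the gradient componentwise. Because $f$ is separable across coordinates, $\partial f/\partial y_i = \exp(y_i)/(\exp(y_i)+1)$, which is the standard logistic sigmoid. Assembled into a vector, this gives $\nabla f(A\xv) = \exp(A\xv)/(\exp(A\xv)+1)$ with the convention (as stated in the corollary) that the exponential and division are taken componentwise. Substituting this into the left-hand side of Theorem \ref{thm:smooth_pen_lasso} reproduces the term $|\av_i^\top \exp(A\xv)/(\exp(A\xv)+1)|$ appearing in the corollary.

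Second, I would verify the smoothness bound. Since $f$ is separable, its Hessian is the diagonal matrix with entries $\exp(y_i)/(\exp(y_i)+1)^2$. Each entry is the derivative of the sigmoid, which is bounded by $1/4$ on $\R$, so $\nabla^2 f \preceq \tfrac{1}{4} I \preceq I$. Hence $f$ is $L$-smooth with $L=1$ (a convenient loose bound that matches the formula in the corollary). Plugging $L=1$ into Theorem \ref{thm:smooth_pen_lasso} yields exactly
\[
\bigl|\av_i^\top \nabla f(A\xv)\bigr| \;<\; \lambda - \norm{\av_i}_2\sqrt{2\,\G(\xv)} \;\Rightarrow\; x_i^\star = 0,
\]
which after substitution of the sigmoid expression for $\nabla f(A\xv)$ is precisely the claimed screening rule.

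There is essentially no obstacle here; the whole corollary is a direct specialization. The only point that merits a sentence of care is the smoothness estimate: one must note that the Hessian is diagonal (so its spectral norm equals the maximum entry) and then bound the scalar sigmoid derivative, after which Theorem \ref{thm:smooth_pen_lasso} does all the work. If one wanted a tighter rule one could instead use $L=1/4$, giving $\norm{\av_i}_2\sqrt{\G(\xv)/2}$ on the right-hand side, but this is not the version stated in the corollary.
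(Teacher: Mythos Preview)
Your proposal is correct and follows essentially the same approach as the paper: identify $f$ as the logistic loss, compute $\nabla f(A\xv)$ as the elementwise sigmoid, verify $L=1$ smoothness, and plug into Theorem~\ref{thm:smooth_pen_lasso}. The only cosmetic difference is that the paper appeals to a cited lemma for the smoothness constant whereas you compute the Hessian directly; your remark that $L=1/4$ would give a tighter rule is a valid observation not made in the paper.
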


\begin{proof}[\textbf{Proof}]
By observation we know that in equation \eqref{eq:logistic_l1} $$f(A\xv) = \sum_{i=1}^{n} \log(\exp([A\xv]_i)+1) \text{ and }\wv \text{ is elementwise vector of } w_i \text{ s.t} \quad w_i=\frac{\exp([A\xv]_i)}{\exp([A\xv]_i)+1} $$
According to \cite[Lemma 5]{Smith:2015ua}, we get that the function $f(A\xv)$ is $1$-smooth. Hence $L = 1$ \\
Now from theorem \ref{thm:smooth_pen_lasso}, we derive the screening rule for logistic regression with $L_1$-regularization which is 
\begin{align*}
\abs{\av_i^\top\left(\frac{\exp(A\xv)}{\exp(A\xv)+1}\right)} < \lambda - \norm{\av_i}_2\sqrt{2\G(\xv)} \ \Rightarrow\  \xv_i^\star=0
\end{align*}
where $\left(\frac{\exp(A\xv)}{\exp(A\xv)+1}\right)$ is element wise vector whose $i_{th}$ element is $\left(\frac{\exp([A\xv]_i)}{\exp([A\xv]_i)+1}\right)$
This result is also known in the literature in \citep{Ndiaye:2015wj} (or see also \cite{Wang:2014un} for a similar approach) and we recover it using our prosed general approach in this paper by using Theorem \ref{thm:smooth_pen_lasso}.\\
\end{proof}

\paragraph{Elastic-net regularized regression}

\begin{proof}[\textbf{Proof of Corollary \ref{cor:elastic_net_regr}}]
\begin{align}
& \frac{1}{2}\|A\xv - \bv\|_2^2 + \lambda_2  \|\xv\|_2^2 + \lambda_1 \|\xv\|_1 \notag \\ 
&=  \frac{1}{2} [\xv^\top A^\top A\xv -2\bv^\top A\xv +\bv^\top\bv] + \lambda_2 \xv^\top\xv + \lambda_1 \|\xv\|_1 \notag\\
&=  \frac{1}{2}[\xv^\top(A^\top A + 2\lambda_2 I)\xv -2\bv^\top A\xv +\bv^\top\bv] + \lambda_1 \|\xv\|_1 \label{eq:elastic_formulate}
\end{align}
Now consider $A^\top A + 2\lambda_2 I = Q^\top Q$ and choose vector $\mv$ such that $A^\top\bv = Q^\top \mv$. Hence line \eqref{eq:elastic_formulate} can be written as
\begin{align*}
&  \tfrac{1}{2}\big[\xv^\top(A^\top A + 2\lambda_2 I)\xv -2\bv^\top A\xv +\bv^\top\bv \big]+ \lambda_1 \|\xv\|_1 \\
&=  \tfrac{1}{2}\big[\xv^\top Q^\top Q\xv - 2\mv^\top Q\xv + \mv^\top\mv - \mv^\top\mv + \bv^\top\bv \big] + \lambda_1 \|\xv\|_1 \\
&=  \tfrac{1}{2} \|Q\xv - \mv\|_2^2 + \tfrac{1}{2}\big[ \bv^\top\bv - \mv^\top\mv \big]  + \lambda_1 \|\xv\|_1
\end{align*}
Now the optimization problem \eqref{eq:elastic} can be written as 
\begin{equation} \label{eq:modified_elastic}
\min_{\xv} \ \tfrac{1}{2}\|Q\xv - \mv\|_2^2 +  \lambda_1 \|\xv\|_1
\end{equation}
Now results from Corollary \ref{cor:pen_lasso} can be directly applied to \eqref{eq:modified_elastic}.  \\
From observation, we know that $f(Q\xv) = \frac{1}{2}\norm{Q\xv-\mv}^2, \quad 
\wv=Q\xv-\mv, \quad \textrm{and }
L=1$\\
Simplification,
\begin{align}
\abs{\bm{q}_i^\top(Q\xv-\mv)} &= \abs{\bm{q}_i^\top Q\xv- \bm{q}_i^\top \mv}\notag\\
&= \abs{\bm{q}_i^\top Q\xv- \av_i^\top \bv}\notag \\
&= \abs{(\av_i^\top A + 2\lambda_2 \bm{e}_i^\top)\xv- \av_i^\top \bv} \label{eq:elastic_exp_smpl1}
\end{align}
\begin{align}
\abs{\bm{q}_i}\sqrt{2\G(\xv)} &= \sqrt{\av_i^\top \av_i + 2\lambda_2} \sqrt{2\G(\xv)}\notag \\
&=\sqrt{2(\av_i^\top \av_i +2 \lambda_2) \G(\xv)}\label{eq:elastic_exp_smpl2}
\end{align}
Now using results from Corollary \ref{cor:pen_lasso}, equations \eqref{eq:elastic_exp_smpl1} and \eqref{eq:elastic_exp_smpl2}, we get screening rules for elastic norm regularization regression problem as:
\begin{align*}
\abs{(\av_i^\top A + 2\lambda_2 \bm{e}_i^\top)\xv- \av_i^\top \bv} < \lambda_1 - \sqrt{2(\av_i^\top \av_i + 2\lambda_2)(\G(\xv))} \ \Rightarrow \ x_i^\star = 0.
\end{align*}
\end{proof}

\begin{lemma}[Conjugate  of  the  Elastic  Net  Regularizer [Lemma 6 \citep{Smith:2015ua}] ] \label{lem:smooth_elastic}
 For $\alpha \ \in  \ (0,1]$, the elastic net function $g(\xv) = \frac{1 - \alpha}{2} \|\xv\|_2^2 + \alpha \|\xv\|_1$ is the convex conjugate of $$g^*(\xv) = \sum_i \Big[ \frac{1}{2(1 - \alpha)}\big( \big[ |x_i|  - \alpha \big]_+\big)^2 \Big] = \sum_i  g_i^*(x_i)$$ where $g_i(\beta_i) =  \big[ \frac{1  - \alpha}{2} \beta_i^2 + \alpha |\beta_i| \big ]$ and   $[.]_+$ is the positive part operator, $[s]_+ = s \ \  for \ \ s > 0$
, and zero otherwise. Furthermore, this $g^*$ is is smooth, \textit{i.e.} has Lipschitz continuous gradient with constant $1/(1 - \alpha)$.

\end{lemma}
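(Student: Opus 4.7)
The plan is to exploit separability of both $g$ and its conjugate. Since $g(\xv) = \sum_i g_i(x_i)$ with $g_i(\beta) = \tfrac{1-\alpha}{2}\beta^2 + \alpha|\beta|$, and the conjugate of a separable sum separates, it suffices to show the one-dimensional identity
\begin{equation*}
g_i^*(x) \;=\; \sup_{\beta \in \R}\bigl[\, x\beta - \tfrac{1-\alpha}{2}\beta^2 - \alpha|\beta|\,\bigr] \;=\; \tfrac{1}{2(1-\alpha)}\bigl([\,|x|-\alpha\,]_+\bigr)^2,
\end{equation*}
and then deduce smoothness of the sum from smoothness of each summand.

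The conjugate computation proceeds by a case split on the sign of the optimizer $\beta$. For $\beta \ge 0$ the objective becomes $(x-\alpha)\beta - \tfrac{1-\alpha}{2}\beta^2$, which is a concave quadratic in $\beta$; its unconstrained maximizer is $\beta = (x-\alpha)/(1-\alpha)$, which is nonnegative exactly when $x \ge \alpha$, and then yields the value $(x-\alpha)^2/(2(1-\alpha))$. When $x < \alpha$ the maximum over $\beta \ge 0$ is attained at $\beta = 0$ with value $0$. The symmetric case $\beta \le 0$ gives $(-x-\alpha)^2/(2(1-\alpha))$ when $x \le -\alpha$ and $0$ otherwise. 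Combining the two half-line maxima via the outer supremum gives exactly the claimed soft-threshold expression $\tfrac{1}{2(1-\alpha)}([|x|-\alpha]_+)^2$.

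For the Lipschitz-gradient claim, the cleanest route is to invoke Lemma~\ref{lem:smooth_convex_condition}: observe that $g_i$ is the sum of a $(1-\alpha)$-strongly convex quadratic and a convex (nonsmooth) term, hence $g_i$ is itself $(1-\alpha)$-strongly convex, so $g_i^*$ has gradient that is $1/(1-\alpha)$-Lipschitz. Summing over coordinates preserves the coordinate-wise Lipschitz constant with respect to the Euclidean norm. As a sanity check one can also differentiate the closed form directly: $(g_i^*)'(x) = \mathrm{sign}(x)\,[|x|-\alpha]_+/(1-\alpha)$ is precisely the soft-thresholding operator, which is well known to be $1/(1-\alpha)$-Lipschitz by a direct monotonicity argument on each of the three regions $x \le -\alpha$, $|x|\le\alpha$, $x\ge\alpha$.

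The main obstacle is purely notational rather than mathematical: the absolute value inside $g_i$ forces a case split and requires care in verifying that the candidate optimizer lies in the correct half-line, so one must check the interior and boundary regimes in each sign case and glue the pieces together into a single $[\,\cdot\,]_+$ expression. Once this is handled the smoothness claim follows immediately from Lemma~\ref{lem:smooth_convex_condition}; since the result is attributed to \citep{Smith:2015ua}, no new idea is needed beyond executing these steps carefully.
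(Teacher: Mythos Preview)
Your proposal is correct and follows essentially the same approach as the paper: separability reduces to a one-dimensional conjugate, which is computed by a case split on the sign of the optimizer $\beta$, yielding the soft-threshold formula. You additionally supply a proof of the $1/(1-\alpha)$-Lipschitz gradient claim via Lemma~\ref{lem:smooth_convex_condition}, which the paper's own proof omits (deferring to the cited reference); this is a welcome completion rather than a deviation.
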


\begin{proof}[\textbf{Proof}]
The complete proof has been given in \citep[Lemma 6]{Smith:2015ua} but we also provide proof here below. \\
From the definition of convex conjugate function,
\begin{align}
g^*(\xv) &= \sup_{\betav}[\xv^\top \betav - g(\betav)] \notag \\
& = \sup_{\betav}\big[\xv^\top \betav - \big( \frac{1 - \alpha}{2} \|\betav\|_2^2 + \alpha \|\betav\|_1\big)\big] \notag \\
 & = \sup_{\beta_i}\big[\sum_{i} x_i \beta_i - \big( \sum_{i} \big( \frac{1 - \alpha}{2} \beta_i^2 + \alpha |\beta_i|\big)\big)\big] \ \ \forall \ \ i \in [n] \notag \\
 & = \sum_{i}  \sup_{\beta_i}  \big[x_i \beta_i -  \big( \frac{1  - \alpha}{2} \beta_i^2 + \alpha |\beta_i|\big) \big] \ \ \forall \ \ i \in [n] \notag  \\
 & = \sum_i g_i^*(x_i) \ \text{, where } \ \  g_i(\beta_i) =  \frac{1  - \alpha}{2} \beta_i^2 + \alpha |\beta_i| \notag
\end{align}
Now,
\begin{align*}
g_i^*(x_i) = \sup_{\beta_i}  \big[x_i \beta_i -  \big( \frac{1  - \alpha}{2} \beta_i^2 + \alpha |\beta_i|\big) \big]
\end{align*}
Consider three cases now : \\
\begin{itemize}
\item[Case 1:] $\beta > 0$.
 \begin{align*}
 g_i^*(x_i)   &= \sup_{\beta_i}  \big[x_i \beta_i -  \big( \frac{1  - \alpha}{2} \beta_i^2 + \alpha \beta_i \big) \big] \\
 &\Rightarrow \beta_i = \frac{(x_i - \alpha)}{(1 - \alpha)} \ \ \text{that also implies } \ \  x_i > \alpha \\
 \text{Hence, } g_i^*(x_i)   &= \frac{(x_i - \alpha)^2}{2(1 - \alpha)} \ \ \text{whenevr } \ \ x_i > \alpha
 \end{align*}
 \item[Case 2:] $\beta < 0$.
 \begin{align*}
 g_i^*(x_i)   &= \sup_{\beta_i}  \big[x_i \beta_i -  \big( \frac{1  - \alpha}{2} \beta_i^2 - \alpha \beta_i \big) \big] \\
 &\Rightarrow \beta_i = \frac{(x_i + \alpha)}{(1 - \alpha)} \ \ \text{that also implies } \ \  x_i < - \alpha \\
 \text{Hence, } g_i^*(x_i)   &= \frac{(x_i + \alpha)^2}{2(1 - \alpha)} \ \ \text{whenevr } \ \ x_i < -\alpha
 \end{align*}
 \item[Case 3:] $\beta = 0$.
 \begin{align*}
 g_i^*(x_i)   &= 0 \ \ \text{that also implies } \ \  |x_i| \leq  \alpha \\
 \end{align*}
\end{itemize} 

Hence, \begin{align*}
 g_i^*(x_i) = \frac{1}{2(1 - \alpha)}\big( \big[ |x_i|  - \alpha \big]_+\big)^2
\end{align*}
From all of the above arguments, $g^*(\xv) = \sum_i \Big[ \frac{1}{2(1 - \alpha)}\big( \big[ |x_i|  - \alpha \big]_+\big)^2 \Big] = \sum_i  g_i^*(x_i)$ 
\end{proof}

\begin{theorem} \label{thm:elasticnet_genral}
If we consider the general elastic net formulation of the form 
\begin{align}
\min_{\xv} \ f(A\xv) + \frac{(1-\alpha)}{2}  \|\xv\|_2^2 + \alpha \|\xv\|_1 \label{eq:elasticnet_genral}
\end{align}
If $f$ is L-smooth, then the following screening rule holds for all $i \in [n]$:
$$\abs{\av_i^\top \nabla f(A\xv)} < \alpha - \norm{\av_i}_2\sqrt{2L\,\G(\xv)} \ \Rightarrow\  \xv_i^\star=0$$ 
\end{theorem}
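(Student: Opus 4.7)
The plan is to mirror the argument the paper uses for pure $L_1$-penalized problems (Lemma \ref{lem:l1_regularized} and Theorem \ref{thm:smooth_pen_lasso}), exploiting the fact that the elastic net penalty is separable and that its subdifferential at zero coincides with the $L_1$-subdifferential. Writing $g(\xv) = \sum_i g_i(x_i)$ with $g_i(t) = \tfrac{1-\alpha}{2}t^2 + \alpha|t|$, the problem fits the partially separable template \eqref{eq:primalS}--\eqref{eq:dualS}, so the separated optimality condition \eqref{eq:opt_gi} applies: $-\av_i^\top \wv^\star \in \partial g_i(x_i^\star)$ for every $i$.

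First I would compute $\partial g_i$ explicitly. For $t \neq 0$, $\partial g_i(t) = \{(1-\alpha)t + \alpha\,\mathrm{sign}(t)\}$, which lies in $(\alpha,\infty)$ if $t>0$ and in $(-\infty,-\alpha)$ if $t<0$; at $t=0$, $\partial g_i(0) = [-\alpha,\alpha]$. Combining these cases, every element $s \in \partial g_i(t)$ for $t \neq 0$ satisfies $|s| > \alpha$. Contrapositively, if $|{-\av_i^\top \wv^\star}| = |\av_i^\top \wv^\star| < \alpha$, then $x_i^\star = 0$. This is the elastic net analogue of Lemma \ref{lem:l1_regularized}: the quadratic part only stretches the subdifferential outward away from zero, so the $L_1$-style threshold $\alpha$ governs the screening condition at the optimum.

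Next I would lift the condition from $\wv^\star$ to an accessible iterate. Using the primal optimality \eqref{eq:opt_f}, choose $\wv := \nabla f(A\xv)$ for the current iterate $\xv$. By Cauchy--Schwarz,
\[
|\av_i^\top \wv^\star| \;\le\; |\av_i^\top \wv| + \|\av_i\|_2\,\|\wv - \wv^\star\|_2.
\]
Since $f$ is $L$-smooth, Lemma \ref{lem:smooth_convex_condition} yields that $f^*$ is $(1/L)$-strongly convex, so applying Corollary \ref{cor:Dgaprestriction} with $\mu = 1/L$ gives $\|\wv - \wv^\star\|_2 \le \sqrt{2L\,\G(\xv)}$. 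Substituting,
\[
|\av_i^\top \wv^\star| \;\le\; |\av_i^\top \nabla f(A\xv)| + \|\av_i\|_2\sqrt{2L\,\G(\xv)}.
\]
If the hypothesis $|\av_i^\top \nabla f(A\xv)| < \alpha - \|\av_i\|_2\sqrt{2L\,\G(\xv)}$ holds, then $|\av_i^\top \wv^\star| < \alpha$, and the first step forces $x_i^\star = 0$.

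There is no real obstacle: the argument is a direct combination of the separable optimality condition with the duality-gap bound on $\|\wv - \wv^\star\|$. The only subtlety worth checking carefully is the strict-versus-non-strict boundary in the subdifferential analysis: the strict inequality in the hypothesis is exactly what is needed so that $|\av_i^\top \wv^\star| < \alpha$ strictly, ruling out any $x_i^\star \neq 0$ whose subdifferential value has absolute value at least $\alpha$. Note also that in the limit $\alpha \to 1$ this recovers Theorem \ref{thm:smooth_pen_lasso}, as expected.
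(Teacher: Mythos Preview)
Your proposal is correct and follows essentially the same route as the paper: you use the separable optimality condition \eqref{eq:opt_gi} to show $|\av_i^\top\wv^\star|<\alpha\Rightarrow x_i^\star=0$ via the explicit subdifferential of $g_i$, then bound $|\av_i^\top\wv^\star|$ through Cauchy--Schwarz and Corollary~\ref{cor:Dgaprestriction}, exactly as the paper does. The only addition in the paper's proof is that it also derives the same intermediate rule from the dual optimality condition \eqref{eq:opt_gistar} using the explicit conjugate $g_i^*$ from Lemma~\ref{lem:smooth_elastic}, purely to illustrate the symmetry of the framework; this is not a different argument, and your single derivation via \eqref{eq:opt_gi} suffices.
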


\begin{proof}[\textbf{Proof}] 
Since the optimization problem \eqref{eq:elasticnet_genral} comes under the partially separable framework and we can use the first order optimality condition \eqref{eq:opt_gi} as well as \eqref{eq:opt_gistar} to derive screening rules for the problem. 

 By optimality condition \eqref{eq:opt_gistar}, we know that $$x_i \in \ \partial g_i^*(-\av_i^\top\wv)$$
From lemma \ref{lem:smooth_elastic}, $g_i^*(-\av_i^\top \wv^\star) = \frac{1}{2(1 - \alpha)}\big( \big[ |\av_i^\top \wv^\star|  - \alpha \big]_+\big)^2$ and also  $ \partial g_i^*(-\av_i^\top\wv) = 0 \ \text{whenever} \ \abs{\av_i^\top\wv} \leq \alpha $ \\
Hence whenever $\abs{\av_i^\top\wv} \leq \alpha \Rightarrow x_i = 0$. \\

The same screening rule for elastic net regularized problem can be derived from the optimality condition  \eqref{eq:opt_gi}.
The optimization problem \eqref{eq:elasticnet_genral} can be taken as partially separable problem and from the optimality condition \eqref{eq:opt_gi} 
\begin{align}
&-\av_i^\top\wv^\star \in \partial g_i(x_i^\star) \label{eq:l1_1}\\ %
&\partial g_i(x_i^\star) \in  \left\{\begin{array}{ll}
		 \alpha \frac{x_i^\star}{\abs{x_i^\star}} + (1-\alpha) x_i   &\mbox{if } x_i \neq 0 \\ \relax
		[- \alpha,  \alpha ] & \mbox{if } x_i = 0
	\end{array} \right. \label{eq:l1_2}
\end{align}
Hence, whenever   $\abs{\av_i^\top\wv} \leq \alpha \Rightarrow x_i = 0$. \\

The above arguments also show the significance of symmetry in our formulation as structure \eqref{eq:primal} and \eqref{eq:dual}. This formulation provides our framework more flexibility to be used in larger class of problem.\\
Now,
\begin{align}
\abs{\av_i^\top \wv^\star} &= \abs{\av_i^\top( \wv^\star -\wv  + \wv)} \notag \\
& \le \abs{\av_i^\top\wv} + \abs{\av_i^\top( \wv^\star - \wv)} \notag \\
& \le \abs{\av_i^\top\wv} + \norm{\av_i}_2\|\wv^\star - \wv\|_2 \notag \\
& \le \abs{\av_i^\top\wv} + \norm{\av_i}_2\sqrt{ 2L\,\G(\xv)} \label{eq:elastic_reg_last} 
\end{align}
Equation \eqref{eq:elastic_reg_last} comes directly from corollary \ref{cor:Dgaprestriction}.  Hence finally we get the screening rules for general elastic net penalty problem which is very similar to screening for $L_1-penalized$ problems:

$$\abs{\av_i^\top \nabla f(A\xv)} < \alpha - \norm{\av_i}_2\sqrt{2L\,\G(\xv)} \ \Rightarrow\  \xv_i^\star=0$$

Now the above mentioned rule can be made a bit tighter under some condition which is not very interesting to discuss here.
\end{proof}

\subsection{Screening for Structured Norms} \label{app:structured_norm}

\begin{lemma} \label{lem:grplasso_basic}
If we use the same notation as mentioned in Section \ref{subsec:l1_l2_genral} to write a vector $\xv$ as a concatenation of smaller group vectors $\{ \xv_1 \cdots \xv_G\}$ such that $\xv^\top = \left[ \xv_1^\top, \xv_2^\top \cdots \xv_G^\top \right]$ and correspondingly the matrix $A$ can be denoted as the concatenation of column groups $A = [A_1 \ A_2 \cdots A_G]$. Now if we consider an optimization problem of the form $$\ \  \argmin_{\xv} f(A\xv) + \sum_{g=1}^G \sqrt{\rho_g} \|\xv_g\|_2 \qquad$$ \\
At the optimal point $\xv^\star$ and dual optimal points $\wv^\star$, we get rules according to the following equation: 
\begin{align*}
\|A_g^\top \wv^\star\|_2 < \sqrt{\rho_g} \ \Rightarrow\  \xv_g^\star = 0
\end{align*}        
\end{lemma}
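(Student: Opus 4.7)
The plan is to follow the same template as the $L_1$-penalized case (Lemma~\ref{lem:l1_regularized}), but now at the level of groups rather than coordinates. The regularizer $g(\xv)=\sum_{g=1}^G \sqrt{\rho_g}\|\xv_g\|_2$ is separable across the group partition, with per-group pieces $g_g(\xv_g):=\sqrt{\rho_g}\|\xv_g\|_2$. So I can use the \emph{group-wise} analogue of the separable optimality condition~\eqref{eq:opt_gi}, namely $-A_g^\top\wv^\star\in\partial g_g(\xv_g^\star)$ for every group $g$, which is obtained exactly as in Appendix~\ref{app:primaldual} by splitting the Lagrangian along the group structure instead of along individual coordinates.

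Next I would record the subdifferential of $g_g$. A standard computation gives
\begin{equation*}
\partial g_g(\xv_g) \;=\;
\begin{cases}
\left\{\sqrt{\rho_g}\,\dfrac{\xv_g}{\|\xv_g\|_2}\right\} & \text{if } \xv_g\neq \mathbf{0},\\[4pt]
\{\vv\in\R^{|g|}:\|\vv\|_2 \le \sqrt{\rho_g}\} & \text{if } \xv_g = \mathbf{0}.
\end{cases}
\end{equation*}
In particular, whenever $\xv_g\neq\mathbf{0}$, any element of $\partial g_g(\xv_g)$ has $\ell_2$-norm exactly equal to $\sqrt{\rho_g}$.

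From here the screening conclusion is immediate by contraposition. Suppose $\xv_g^\star\neq\mathbf{0}$. Then the optimality condition forces $-A_g^\top\wv^\star\in\partial g_g(\xv_g^\star)$, so $\|A_g^\top\wv^\star\|_2 = \|-A_g^\top\wv^\star\|_2=\sqrt{\rho_g}$. Equivalently, $\|A_g^\top\wv^\star\|_2<\sqrt{\rho_g}$ implies $\xv_g^\star=\mathbf{0}$, which is the claimed rule.

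The only mildly delicate step is the verification of the subdifferential of $\sqrt{\rho_g}\|\cdot\|_2$ at $\mathbf{0}$ and the use of the group-wise optimality condition; both are standard consequences of convex duality applied to the norm (whose Fenchel conjugate is the indicator of the dual-norm ball, cf.\ Lemma~\ref{lem:conjugates}) and of the separability of $g$ over the group partition, which carries over verbatim from the scalar separable derivation in Appendix~\ref{app:primaldual}. No smoothness or strong convexity of $f$ is needed for this lemma, since we only invoke the first-order conditions at the exact optimum $(\xv^\star,\wv^\star)$; smoothness will enter only later, in Theorem~\ref{thm:smooth_grp_lasso}, to replace $\wv^\star$ by $\nabla f(A\xv)$ using Corollary~\ref{cor:Dgaprestriction}.
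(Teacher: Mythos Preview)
Your proof is correct and matches the paper's own argument. In fact the paper gives two derivations of this lemma: one via the conjugate $g_g^*$ (the indicator of the $\ell_2$-ball of radius $\sqrt{\rho_g}$) and the optimality condition~\eqref{eq:opt_gistar}, and a second ``another view'' via the primal subdifferential of $\sqrt{\rho_g}\|\cdot\|_2$ together with~\eqref{eq:opt_gi}; your contrapositive argument is precisely the latter.
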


\begin{proof}[\textbf{Proof}]

Dual of the problem is given by
\begin{align}
  \calP(\wv) 
    =f^*(\wv )
    +\sum_g \sqrt{\rho_g}  \id_{L_\infty} (\frac {\norm{A_g^\top \wv}_2}{\sqrt{\rho_g}} ) \label{eq:dual_l1_regularized}
\end{align}
Hence for the indicator function $g_g^*$ by Lemma \ref{lem:subgrad_ind} 
\begin{align*} 
 \partial g_g^*(-A_g^\top \wv^\star)  =& \left\{ \sv \vert  \ \ \forall \zv \ s.t \ \ \norm{\frac{A_g^\top  \zv}{\sqrt{\rho_g}}}_2 \le 1 \:; \ \sv^\top(-A_g^\top \zv+A_g^\top \wv^\star) \le 0  \quad    \right\} \\
  =& \left\{ \sv \vert  \ \ \forall \zv \ s.t \ \ \norm{A_g^\top  \zv}_2 \le \sqrt{\rho_g} ; \ \sv^\top (A_g^\top  \zv  )\ge \sv^\top  (A_g^\top \wv^\star )\quad   \right\} 
\end{align*}
Now, by the optimality condition \eqref{eq:opt_gistar} $\xv_g \in \ \partial g_g^*(-A_g^\top \wv^\star)$, and since this holds, hence $xv_g^\star$ should satisfy the required constrained which is needed to be in the set of subgradients of $\partial g^*(-A_g^\top \wv^\star)$ according to conditions given above. Hence,

\begin{align}
&-{\xv_g^\star}^\top (A_g^\top  \zv  )\le -{\xv_g^\star}^\top  (A_g^\top \wv^\star )  \quad \forall \zv \ s.t \ \ \norm{A_g^\top  \zv}_2 \le \sqrt{\rho_g}  \notag\\
&\Rightarrow {\xv_g^\star}^\top (A_g^\top  \zv  )\ge {\xv_g^\star}^\top  (A_g^\top \wv^\star )  \quad \forall \zv \ s.t \ \ \norm{A_g^\top  \zv}_2 \le \sqrt{\rho_g} \notag \\
&\Rightarrow   \  {\xv_g^\star}^\top  (A_g^\top \wv^\star ) \le \underset{z}{\min} \  {\xv_g^\star}^\top (A_g^\top  \zv  ) \quad   \ s.t   \ \ \norm{A_g^\top  \zv}_2 \le \sqrt{\rho_g} \notag \\
&\Rightarrow   \  {\xv_g^\star}^\top  (A_g^\top \wv^\star ) \le \underset{z}{\min} \|\xv_g\|_2 \|A_g^\top  \zv \|_2
\quad   \ s.t   \ \ \norm{A_g^\top  \zv}_2 \le \sqrt{\rho_g} \notag \\
&\Rightarrow   \  \xv_g^\top  (A_g^\top \wv^\star ) \le - \|\xv_g^\star\|_2 \sqrt{\rho_g} \notag \\
&\Rightarrow \|A_g^\top \wv^\star\|_2 = \sqrt{\rho_g}  \label{eq:aaaa}
\end{align}

Equation \eqref{eq:aaaa} comes from the cauchy inequality and true  $\forall \xv_g^\star \ : \ \ \xv_g^\star \neq 0$. Whenever $\|A_g^\top \wv^\star\|_2 < \sqrt{\rho_g}$ then $\xv_g^\star = 0$

Another view on the screening of above optimization problem can be seen from the optimality condition \eqref{eq:opt_gi}.
The optimization problem in Lemma \ref{lem:grplasso_basic} can be taken as partially separable problem and from the optimality condition \eqref{eq:opt_gi} %
\begin{align}
&-A_g^\top\wv^\star \in \partial g(\xv_g^\star) \label{eq:gl2l1_1}\\
&\partial g(\xv_g^\star) \in  \left\{\begin{array}{ll}
		 \sqrt{\rho_g} \frac{\xv_g}{\|x_g\|_2}   &\mbox{if } \xv_g \neq 0 \\ \relax
		\ball_2 & \mbox{if } \xv_g = 0 \ and \ \ball_2 \ \textrm{is norm ball of radius } \sqrt{\rho_g}
	\end{array} \right. \label{eq:gl2l1_2} 
\end{align}
From Equations \eqref{eq:gl2l1_1} and \eqref{eq:gl2l1_2}, we conclude that if 
$$\|A_g^\top \wv^\star\|_2 < \sqrt{\rho_g} \ \Rightarrow\ \xv_g^\star = 0$$
\end{proof}

\begin{proof}[\textbf{Proof of Theorem \ref{thm:smooth_grp_lasso}}]
From Equation \eqref{eq:opt_f}, we know that $\wv \in \nabla f(A\xv)$. Now
\begin{align}
\|  A_g^\top \wv^\star  \|_2 &= \| A_g^\top {(\wv + \wv^\star - \wv)}  \|_2  \leq \| A_g^\top \wv  \|_2 + \| A_g^\top (\wv^\star - \wv)\|_2 \notag \\ 
& = \|A_g ^\top \wv \|_2 + \sqrt{tr(( A_g^\top ( \wv^\star - \wv) )(( \wv^\star - \wv) ^\top) A_g)^\top} \notag \\
& \leq \|A_g^\top \wv  \|_2 + \sqrt{tr(( \wv^\star - \wv) ^\top( \wv^\star - \wv))} \sqrt{tr(A_g^\top A_g)} \notag \\
&=\|A_g^\top \wv \|_2 + \| \wv^\star -\wv \|_2 \frobnorm{A_g} \label{eq:grp_basic_last}
\end{align}
Using Corollary \ref{cor:Dgaprestriction}  with Equation \eqref{eq:grp_basic_last}, we get
\begin{align*}
\|  A_g^\top \wv^\star  \|_2 \le \|A_g^\top \nabla f(A\xv) \|_2 + \sqrt{2L\,\G(\xv)} \frobnorm{A_g}
\end{align*}
Hence using previous Lemma \ref{lem:grplasso_basic},
\begin{align*}
\|A_g^\top \nabla f(A\xv) \|_2 + \sqrt{2L\,\G(\xv)} \frobnorm{A_g} < \sqrt{\rho_g} \ \Rightarrow\  \xv_g^\star = 0
\end{align*}
\end{proof}

\begin{proof}[\textbf{Proof of Corollary \ref{grplassbasic_spc}}]
This is an explicit case of the optimization problem mentioned in Lemma \ref{lem:grplasso_basic}. By observation we know that,
$$f(A\xv) = \tfrac{1}{2}\|A\xv - b\|^2, \quad \wv = A\xv - b \quad and \quad L = 1$$
Now applying the findings of Theorem \ref{thm:smooth_grp_lasso}, we get 
\begin{align*}
\|A_g^\top (A\xv - \bv)\|_2 + \sqrt{2\G(\xv)}   \frobnorm{A_g} <\lambda \sqrt{\rho_g} \ \Rightarrow\  \xv_g^\star = 0
\end{align*}
\end{proof}

In Lemma \ref{lem:gengrpnorm} mentioned below, we show that the structured norm setting of \citep{Ndiaye:2015wj} can be derived from our more general \eqref{eq:primal} and \eqref{eq:dual} structure.
\begin{lemma} \label{lem:gengrpnorm}
Sparse Multi-Task and Multi Class Model \citep{Ndiaye:2015wj} - If we consider general problem of the form 
\begin{equation} \label{eq:gengrpnorm}
\min_{X \in \R^{p \times q}} \  \sum_{i = 1}^n f_i (\av_i^\top X) + \lambda\Omega(X) 
\end{equation}
where the regularization function $\Omega : \R^{p \times q}\rightarrow \R_+ $ is such that $\Omega(X) = \sum_{g=1}^p \| \xv_g \|_2$ and $X = [\xv_1, \xv_2 \cdots \xv_G]$. We write $W = [\wv_1, \wv_2 \cdots \wv_G]$ for variable of the dual problem. %
Then the screening rule becomes
\begin{align*}
\| {\av^{(g)}}^\top W \|_2 < \lambda  - \|\av^{(g)}\|_2 \, \|W - W^\star\|_2 \ \Rightarrow\  \xv_g^\star = 0
\end{align*}
Here $\av^{(g)}$ is the vector of the $g^{th}$ element group of each vector $\av_i$. 
\end{lemma}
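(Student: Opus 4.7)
The plan is to mimic the two-step schema that the paper already uses for the vector $\ell_2/\ell_1$ group lasso (Lemma \ref{lem:grplasso_basic} followed by Theorem \ref{thm:smooth_grp_lasso}), transplanted to the matrix-valued setting. First I would cast \eqref{eq:gengrpnorm} into the primal-dual template \eqref{eq:primal}-\eqref{eq:dual}, taking the data-fit part $f(AX) = \sum_{i=1}^n f_i(\av_i^\top X)$ as the smooth term (acting on the stack of row-products $\av_i^\top X$) and taking $g(X) = \lambda\,\Omega(X) = \lambda\sum_{g=1}^p\|\xv_g\|_2$ as the separable regularizer. Because $\Omega$ decomposes as a sum over the $p$ row-groups $\xv_g$, its Fenchel conjugate also decomposes block-wise over the same groups, and each block conjugate is, by Lemma \ref{lem:conjugates}, the indicator function of an $\ell_2$-ball of radius $\lambda$.

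Next I would invoke the block-separable optimality conditions \eqref{eq:opt_gi}-\eqref{eq:opt_gistar} at the optimum $(X^\star,W^\star)$. The restriction of $-A^\top W^\star$ to the coordinates corresponding to the $g$-th row-group of $X$ is precisely $-{\av^{(g)}}^\top W^\star$, and this block must lie in $\lambda\,\partial\|\cdot\|_2$ evaluated at $\xv_g^\star$. Repeating verbatim the subgradient analysis from Lemma \ref{lem:grplasso_basic} (the subdifferential of $\|\cdot\|_2$ at a nonzero point is a unit-norm vector, while at the origin it is the full unit ball), one concludes that the strict inequality $\|{\av^{(g)}}^\top W^\star\|_2 < \lambda$ is incompatible with $\xv_g^\star \neq 0$ and therefore forces $\xv_g^\star = 0$.

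Finally, to turn this into a rule that can be evaluated at an arbitrary feasible dual iterate $W$ (rather than the unknown $W^\star$), I would apply the triangle inequality followed by sub-multiplicativity of a consistent matrix norm:
$$\|{\av^{(g)}}^\top W^\star\|_2 \le \|{\av^{(g)}}^\top W\|_2 + \|{\av^{(g)}}^\top (W^\star - W)\|_2 \le \|{\av^{(g)}}^\top W\|_2 + \|\av^{(g)}\|_2\,\|W - W^\star\|_2.$$
Combining this with the previous step yields the stated screening rule.

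The main obstacle I anticipate is purely one of bookkeeping rather than of substance: one must verify that the row-group indexing of $X$ aligns correctly with the rows of $A^\top W^\star$, so that the vector $\av^{(g)}$ collecting the $g$-th coordinate of each $\av_i$ is indeed the object appearing in the separable subgradient condition. The analytic content is otherwise identical to the vector group-lasso case. Note also that the lemma statement leaves $\|W - W^\star\|_2$ as an abstract quantity: promoting it to a fully computable screening test (in the spirit of Theorem \ref{thm:smooth_grp_lasso}) would require an $L$-smoothness assumption on $f$ together with Corollary \ref{cor:Dgaprestriction} to bound it by $\sqrt{2L\,\G(X)}$, and would naturally be stated as a separate corollary.
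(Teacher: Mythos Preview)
Your proposal is correct and the analytic content---the subgradient characterization of $\lambda\|\cdot\|_2$ giving $\|{\av^{(g)}}^\top W^\star\|_2 < \lambda \Rightarrow \xv_g^\star = 0$, followed by the triangle-inequality bound $\|{\av^{(g)}}^\top W^\star\|_2 \le \|{\av^{(g)}}^\top W\|_2 + \|\av^{(g)}\|_2\,\|W - W^\star\|_2$---matches the paper exactly.

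There is one small structural difference worth noting. You cast the problem directly as an instance of \eqref{eq:primalS} with $g(X)=\lambda\Omega(X)$ group-separable and invoke \eqref{eq:opt_gi}. The paper instead exploits the symmetry of the Fenchel--Rockafellar pair: it swaps the roles of $f$ and $g$, so that the data-fit term $\sum_i f_i(\av_i^\top X)$ plays the role of the separable $g^*$ in \eqref{eq:dualS} and $\lambda\Omega(X)$ plays the role of $f^*$; the relevant optimality condition is then \eqref{eq:opt_fstar}, yielding $AW^\star \in \partial(\lambda\Omega)(X^\star)$. Both routes arrive at the same subgradient inclusion and the same screening rule. Your direct casting is arguably cleaner here; the paper's swap is deliberate, as its point in this lemma is to demonstrate that the \cite{Ndiaye:2015wj} setting (where separability is over the $n$ loss terms, not the regularizer) is recovered precisely by invoking the symmetry of the framework. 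Your remark that promoting $\|W-W^\star\|_2$ to $\sqrt{2L\,\G(X)}$ via Corollary~\ref{cor:Dgaprestriction} belongs in a separate corollary is also exactly what the paper does.
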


\begin{proof}[\textbf{Proof}]
Equations pair \eqref{eq:primal} and \eqref{eq:dual} can be used interchangeably by replacing primal with dual and $f$ with $g$. Hence the partial separable primal-dual pair \eqref{eq:primalS} and \eqref{eq:dualS} can also be used interchangeably. By comparing Equation \eqref{eq:gengrpnorm} with \eqref{eq:primalS} and \eqref{eq:dualS}, we observe that separable function $\sum_{i = 1}^n f_i (\av_i^\top X)$ %
takes the place of separable $g^*$ in \eqref{eq:dualS}
and $\lambda\Omega(X)$ takes the place of $f^*$.
Hence we apply the optimality condition \eqref{eq:opt_fstar} to get (with exchanged primal dual variable) $$AW^\star \in \partial {\lambda \Omega(X^\star)}$$
Hence if,
\begin{align} \label{eq:primary_gen_grp_lass}
\| {\av^{(g)}}^\top W^\star \|_2 < \lambda \ \Rightarrow\  \xv_g = 0
\end{align}
Now,
\begin{align}
\| {\av^{(g)}}^\top W^\star \|_2 &= \| {\av^{(g)}}^\top (W^\star - W + W) \|_2 \notag\\
&\le \| {\av^{(g)}}^\top W \|_2 + \|{\av^{(g)}}^\top (W^\star - W ) \|_2 \notag\\
&\le \| {\av^{(g)}}^\top W \|_2 + \|{\av^{(g)}}\|_2 \|(W^\star - W ) \|_2 \label{eq:last_line_gen_grp_lass}
\end{align}
Using equations \eqref{eq:primary_gen_grp_lass} and \eqref{eq:last_line_gen_grp_lass}, the screening rule comes out to be
$$\| {\av^{(g)}}^\top W \|_2 < \lambda  - \|\av^{(g)}\|_2 \, \|W - W^\star\|_2 \ \Rightarrow\  \xv_g^\star = 0$$
\end{proof}
\begin{corollary}
If for all $i \in [n]$, $f_i$ is $L$-Lipschitz gradient then screening rule for equation \eqref{eq:gengrpnorm} is 
$$\| {\av^{(g)}}^\top W \|_2 < \lambda  - \|\av^{(g)}\|_2 \sqrt{2L\,\G(X)} \ \Rightarrow\  \xv_g^\star = 0$$ 
\end{corollary}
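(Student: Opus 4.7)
The starting point for the plan is Lemma~\ref{lem:gengrpnorm}, which already delivers the rule
$$\|{\av^{(g)}}^\top W\|_2 < \lambda - \|\av^{(g)}\|_2 \|W - W^\star\|_2 \ \Rightarrow\  \xv_g^\star = 0.$$
So the only remaining task is to replace the unknown quantity $\|W - W^\star\|_2$ by a computable, duality-gap based upper bound. The natural tool for this is Corollary~\ref{cor:Dgaprestriction}, and the pattern is the same one used in Theorem~\ref{thm:smooth_grp_lasso} and Theorem~\ref{thm:smooth_pen_lasso}.

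First I would observe that, since each $f_i$ is $L$-Lipschitz gradient and the arguments $\av_i^\top X$ appear in decoupled components, the separable sum $F(\yv) := \sum_i f_i(y_i)$ is itself $L$-Lipschitz gradient. Then I would identify the group-penalized problem \eqref{eq:gengrpnorm} with the primal \eqref{eq:primal}/\eqref{eq:primalS} by taking $X$ in the role of $\xv$, $F$ in the role of $f$, and $\lambda\Omega$ in the role of $g$; the dual variable $W$ then plays the role of $\wv$. Under this identification Corollary~\ref{cor:Dgaprestriction} applies directly and yields
$$\|W - W^\star\|_2^2 \ \le\ 2L\,\G(X).$$

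Substituting this bound into the conclusion of Lemma~\ref{lem:gengrpnorm} gives the sufficient condition
$$\|{\av^{(g)}}^\top W\|_2 < \lambda - \|\av^{(g)}\|_2\sqrt{2L\,\G(X)} \ \Rightarrow\ \xv_g^\star = 0,$$
which is exactly the claimed screening rule.

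The only conceptual subtlety worth flagging is that Lemma~\ref{lem:gengrpnorm} was derived under the \emph{opposite} identification (matching \eqref{eq:gengrpnorm} to the separable dual \eqref{eq:dualS}), whereas here I match it to the primal \eqref{eq:primalS}. Both identifications are legitimate because the Fenchel--Rockafellar pair \eqref{eq:primal}/\eqref{eq:dual} treats its two sides fully symmetrically; the duality gap $\G$ and the set of optimal pairs $(X^\star, W^\star)$ are the same object regardless of which side is called ``primal''. For this reason no real obstacle arises: the argument is a direct combination of Lemma~\ref{lem:gengrpnorm} with Corollary~\ref{cor:Dgaprestriction}, and the proof is essentially a one-line substitution.
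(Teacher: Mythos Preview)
Your proposal is correct and follows exactly the paper's approach: the paper's proof is the single line ``Using Lemma~\ref{lem:gengrpnorm} and Corollary~\ref{cor:Dgaprestriction}, we get the desired expression,'' which is precisely the combination you spell out. Your additional remarks about the separable $F$ being $L$-smooth and about the primal/dual identification are valid elaborations that the paper leaves implicit.
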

\begin{proof}
Using Lemma \ref{lem:gengrpnorm} and Corollary \ref{cor:Dgaprestriction}, we get the desired expression.
\end{proof}

\subsection{Connection with Sphere Test Method} \label{app:connection_sphere_test}
The general idea behind the sphere test method \cite{Xiang:2014vi}
is to consider the maximum value of desired function in a spherical region which contains the optimal dual variable. In context of our general framework \eqref{eq:primal} and \eqref{eq:dual}, we obtain this case when considering an $\ell_1$ penalty or $\ell_2/\ell_1$ penalty. That means $g$ is a norm and hence from Lemma \ref{lem:conjugates}, $g^*$ becomes the indicator function of the dual norm ball of $A^\top \wv$. The dual norm function for $\ell_1$ norm is of the form $\max_{i}|\av_i^\top \wv|$ and for $\ell_2/\ell_1$ norm, it is $\max_{g}\|A_g^\top \wv\|$. 
Hence, we try to find maximum value of the function of the forms  $\max_{\bm{\theta} \in \mathcal{S}(\bm{q},r)} \av_i^\top\bm{\theta}$ where $\mathcal{S}(\bm{q},r) = \{z : \|\zv-\bm{q}\|_2 \leq r \}$ the ball $\mathcal{S}$ also contains the optimal dual point $\wv^\star$. If the maximum value of $\av_i^\top\bm{\theta}$ is less than some particular value for all the $\bm{\theta}$ in the ball hence  $\av_i^\top \wv$ will also be less than that particular value and that is the main reason we try to find maximum of $\av_i^\top\bm{\theta}$ over the ball $\mathcal{S}$.
\begin{align*} 
\max_{\bm{\theta} \in \mathcal{S}(\bm{q},r)} \av_i^\top\bm{\theta} &= \av_i^\top(\bm{\theta} - \bm{q} + \bm{q}) =  \av_i^\top(\bm{\theta} - \bm{q}) + \av_i^\top \bm{q}\\
&\leq \|\av_i\|_2 \|\bm{\theta} - \bm{q}\| + \av_i^\top \bm{q} \leq r \|\av_i\|_2  + \av_i^\top \bm{q}
\end{align*}
Similar arguments can be given in the  $\ell_2/\ell_1$-norm case. A variety of existing screening test for lasso and group lasso are of this flavor of sphere tests. The difference between these approaches mainly lie in the way of choosing the center and bounding the radius of the sphere, such that the optimal dual variables lie inside the sphere.
Our method can be seen as a general framework for such a sphere test based screening with dynamic screening rules. Our method can be interpreted as a sphere test with the current iterate of the dual variable $\wv$ as a center of the ball, and we obtain the bound on the radius in terms of duality gap function.

\end{document}